\DeclareMathOperator{\Var}{Var}
\DeclareMathOperator{\Cov}{Cov}
\newcommand{\N}{\mathds{N}}
\newcommand{\Z}{\mathds{Z}}
\newcommand{\Q}{\mathds{Q}}
\newcommand{\R}{\mathds{R}}
\newcommand{\cB}{\mathcal{B}}
\newcommand{\cT}{\mathcal{T}}
\newcommand{\<}{\langle}
\renewcommand{\>}{\rangle}
\renewcommand{\P}{\mathrm{P}}
\newcommand{\E}{\mathrm{E}}
\renewcommand{\d}{{\rm d}}
\newcommand{\e}{{\rm e}}
\renewcommand{\geq}{\geqslant}
\renewcommand{\leq}{\leqslant}
\renewcommand{\ge}{\geqslant}
\renewcommand{\le}{\leqslant}
\newenvironment{Note-by-Le}{\par\color{BrickRed}}{\par}
\author{Le Chen\\Univ.\  Kansas\and Jingyu Huang\\Univ.\ Utah
	\and D. Khoshnevisan\\Univ.\  Utah
	\and Kunwoo Kim\\POSTECH}
\title{\bf Dense blowup for parabolic SPDEs\thanks{
	Research supported in part by the NSF grants DMS-1307470 
	and DMS-1608575 [D.K.], and DMS-1440140
	 [J.H., D.K., and K.K.] while three of the authors
	were in residence at the Mathematical Sciences Research
	Institute at UC Berkeley in Fall of 2015.}}
\date{February 26, 2017}
\newtheorem{stat}{Statement}[section]
\newtheorem{proposition}[stat]{Proposition}
\newtheorem{corollary}[stat]{Corollary}
\newtheorem{theorem}{Theorem}
\newtheorem{lemma}[stat]{Lemma}
\theoremstyle{definition}
\newtheorem{remark}[stat]{Remark}
\newtheorem{conj}{Conjecture}
\newtheorem*{OP}{Open Problem}
\numberwithin{equation}{section}
\begin{document}
\maketitle
\begin{abstract}
	The main result of this paper is that there are examples of stochastic partial differential equations
	[hereforth, SPDEs] of the type 
	\[
		\partial_t u=\tfrac12\Delta u +\sigma(u)\eta
		\qquad\text{on $(0\,,\infty)\times\R^3$}
	\]
	such that the solution exists and is unique as a random field
	in the sense of Dalang \cite{Dalang} and Walsh \cite{Walsh},
	yet the solution has unbounded oscillations in every open neighborhood
	of every space-time point. We are not aware of the existence of
	such a construction in spatial dimensions below $3$.
	
	En route, it will be proved that
	there exist a large family of parabolic SPDEs whose
	moment Lyapunov exponents grow at least sub exponentially in its
	order parameter in the sense that there exist $A_1,\beta\in(0\,,1)$ such that
	\[
		\underline{\gamma}(k) := 
		\liminf_{t\to\infty}t^{-1}\inf_{x\in\R^3}
		\log\E\left(|u(t\,,x)|^k\right) \ge A_1\exp(A_1 k^\beta)
		\qquad\text{for all $k\ge 2$}.
	\]
	This sort of ``super intermittency'' is combined with a local linearization of the solution,
	and with techniques from Gaussian analysis
	in order to establish the unbounded oscillations of the sample functions
	of the solution to our SPDE.\\
	
	\noindent{\it Keywords.} Stochastic partial differential equations,  blowup,
		intermittency.\\
	\noindent{\it \noindent AMS 2010 subject classification.}
	Primary 35R60, 60H15; Secondary 60G15.
\end{abstract}

\section{Introduction}
Throughout, let us choose and fix a non random, globally Lipschitz-continuous function 
$\sigma:\R\to\R$, and consider the stochastic heat equation,
\begin{equation}\label{SHE}
	\frac{\partial u(t\,,x)}{\partial t} = \frac12 (\Delta u)(t\,,x) + \sigma(u(t\,,x))\eta(t\,,x)
	\qquad\text{for $(t\,,x)\in(0\,,\infty)\times\R^3$},
\end{equation}
subject to initial value $u(0)\equiv1$. The forcing term $\eta$ is a white noise with
homogeneous correlations in its spatial variable; that is, $\eta$ is a centered,
generalized Gaussian random field with 
\[
	\Cov[\eta(t\,,x)\,,\eta(s\,,y)] = \delta_0(t-s)f(x-y)\qquad
	\text{for all $(t\,,x),(s\,,y)\in\R_+\times\R^3,$}
\]
where the  \emph{spatial correlation function} $f:\R^3\to\R_+$ is a non random,
non-negative, tempered, and positive semi-definite function. In principle, such equations can be ---
and have been --- studied on $\R_+\times\R^n$ for any integer $n\ge 1$. We will
soon explain why we study them for $n=3$ here.

Let $\widehat{g}$ denote the Fourier transform of any distribution $g$ on $\R^n$,
normalized so that 
\[
	\widehat{g}(z) = \int_{\R^3} \e^{ix\cdot z}g(x)\,\d x
	\qquad\text{for all $z\in\R^3$ and $g\in L^1(\R^3)$}.
\]
The starting point of this article is the following existence and uniqueness theorem of
Dalang \cite{Dalang}. Recall that $\widehat{f}\ge0$ almost everywhere because $f$
is positive semi-definite.

\begin{theorem}[Dalang \protect{\cite{Dalang}}]\label{th:Dalang}
	If
	\begin{equation}
		\int_{\R^3}\frac{\widehat{f}(z)}{1+\|z\|^2}\,\d z<\infty,
		\label{cond:Dalang}
	\end{equation}
	then \eqref{SHE} has a random field solution $u$. Moreover, $u$ is unique subject to the condition that
	\[
		\sup_{t\in[0,T]}\sup_{x\in\R^3}\E\left(|u(t\,,x)|^k\right)<\infty
		\quad\text{for all $k\in[2\,,\infty)$}.
	\]
\end{theorem}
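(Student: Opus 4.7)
The plan is to reformulate \eqref{SHE} in the Walsh--Dalang mild form
\[
    u(t,x) = 1 + \int_{(0,t)\times\R^3} G_{t-s}(x-y)\,\sigma(u(s,y))\,W(\d s\,\d y),
\]
where $G_t(x) = (2\pi t)^{-3/2}\exp(-\|x\|^2/(2t))$ is the free heat kernel on $\R^3$ and $W$ is the worthy martingale measure driven by $\eta$, and then to solve this integral equation by a Picard iteration in the norm $\|v\|_{k,T} := \sup_{t\le T,\, x\in\R^3} \E(|v(t,x)|^k)^{1/k}$.

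The first and crucial step is to construct the stochastic integral against $W$ and to identify \eqref{cond:Dalang} as its natural integrability condition. For a deterministic integrand $\Phi$, Parseval's identity together with the spatial homogeneity of $f$ yields the Walsh--Dalang isometry
\[
    \E\left(\left|\int \Phi\,\d W\right|^2\right) = (2\pi)^{-3}\int_0^T \d s \int_{\R^3} \bigl|\widehat{\Phi(s,\cdot)}(z)\bigr|^2\,\widehat f(z)\,\d z.
\]
Specializing to $\Phi(s,y) = G_{t-s}(x-y)$ and using $\widehat{G_s}(z) = \exp(-s\|z\|^2/2)$, integration in $s$ produces a multiple of $\int_{\R^3}(1-\exp(-t\|z\|^2))\|z\|^{-2}\widehat f(z)\,\d z$, which for each fixed $t>0$ is comparable to $\int_{\R^3}\widehat f(z)/(1+\|z\|^2)\,\d z$. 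Thus \eqref{cond:Dalang} is precisely the condition that the heat kernel is spectrally square-integrable against $f$.

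With the isometry in hand, I would set $u_0 \equiv 1$ and $u_{n+1}(t,x) = 1 + \int G_{t-s}(x-y)\,\sigma(u_n(s,y))\,W(\d s\,\d y)$. For general $k\ge 2$, the Burkholder--Davis--Gundy inequality applied to the quadratic variation of the Walsh martingale, together with the Lipschitz property of $\sigma$, produces an estimate of the form
\[
    \sup_{x\in\R^3}\E\left(|u_{n+1}(t,x)-u_n(t,x)|^k\right) \le C_k \int_0^t \mathcal N(t-s)\,\|u_n-u_{n-1}\|_{k,s}^k\,\d s,
\]
where $\mathcal N(r):=\int_{\R^3}\exp(-r\|z\|^2)\widehat f(z)\,\d z$ is locally integrable by \eqref{cond:Dalang}. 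A Gronwall-type iteration shows that $(u_n)$ is Cauchy in $\|\cdot\|_{k,T}$ for every $T<\infty$, and its limit solves the mild equation with the required uniform $L^k$ bound. Uniqueness within that class follows by applying the same BDG-plus-Gronwall estimate to the difference of two candidate solutions. The main obstacle is the very first step: constructing a Walsh-type stochastic integral against the colored noise and identifying \eqref{cond:Dalang} through Fourier--Plancherel. Once the spectral isometry is available the contraction argument runs in a standard fashion, with the only subtlety being that higher moments $k\ge 3$ must be handled via BDG rather than the bare $L^2$ isometry.
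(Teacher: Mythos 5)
The paper does not prove this statement at all: it is imported verbatim from Dalang \cite{Dalang}, and your outline --- the Fourier--Plancherel isometry identifying \eqref{cond:Dalang} via $\int_0^t\e^{-(t-s)\|z\|^2}\,\d s=(1-\e^{-t\|z\|^2})\|z\|^{-2}\asymp_t(1+\|z\|^2)^{-1}$, followed by Picard iteration in the $\sup_x L^k(\P)$ norm with BDG and Gronwall --- is essentially the argument of that reference (and of Foondun--Khoshnevisan \cite{FK2}), so your proposal matches the source's approach. The sketch is correct; the one point to tidy in a full write-up is that BDG plus Minkowski's integral inequality naturally produce the recursion for \emph{squared} norms, $\|u_{n+1}-u_n\|_{k,t}^2\le C\,k\int_0^t\mathcal{N}(t-s)\,\|u_n-u_{n-1}\|_{k,s}^2\,\d s$, and your stated $k$-th--power form requires an extra H\"older step (harmless, since $\mathcal{N}$ is locally integrable under \eqref{cond:Dalang}).
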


According to a general form of Doob's separability theorem \cite[Theorem 2.2.1, Chapter 5]{MPP},
we may --- and will --- tacitly assume without loss of generality that the 4-parameter process
$u$ is separable.

Dalang \cite{Dalang} has observed that 
Condition \eqref{cond:Dalang} is also necessary
in the case that $\sigma$ is identically a constant.

Recall that the \emph{oscillation function} of a function
$\psi:\R^3\to\R$ is defined as
\[
	\text{\rm Osc}_\psi(x) := \lim_{\varepsilon\downarrow0}
	\sup_{a,b\in B(x,\varepsilon)}\left| \psi(a)-\psi(b) \right|
	\qquad\text{for all $x\in\R^3$},
\]
where
\begin{equation}\label{ball}
	B(x\,,\varepsilon):=\{y\in\R^3:\, \|y-x\|<\varepsilon\}
	\qquad\text{for all $x\in\R^3$ and $\varepsilon>0$}.
\end{equation}
The main results of this paper are the following two theorems.
In one form or another, the next two theorems show the existence of models of
\eqref{SHE} that can have unbounded oscillations everywhere. This holds despite
the fact that $u(t\,,x)$ is a finite random variable at all non random 
space-time points $(t\,,x)\in(0\,,\infty)\times\R^3$.

\begin{theorem}\label{th:conditional}
	Suppose in addition that $\sigma^{-1}\{0\}=\{0\}$ and $\sigma$ is bounded. Then,
	there exist correlation functions $f:\R^3\to\R_+$ that satisfy 
	\eqref{cond:Dalang} and
	\[
		\P  \{\text{\rm Osc}_{u(t)}(x)=\infty \mid u(t\,,x) \neq 0\}=1
		\qquad\text{for every $(t\,,x)\in(0\,,\infty)\times\R^3$}.  
	\]
\end{theorem}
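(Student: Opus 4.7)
The plan is to combine the three ingredients announced in the abstract: pick a spatial correlation $f$ satisfying Dalang's condition \eqref{cond:Dalang} that forces the solution to be \emph{super intermittent}, locally linearize $u$ around a deterministic space-time point, and convert the resulting lower bound on local Gaussian fluctuations into unbounded oscillations via a Gaussian zero-one law.

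\emph{Construction of $f$.} I would pick $f$ whose Fourier transform $\widehat{f}$ sits just inside Dalang's integrability class, for instance a Riesz-type kernel $\widehat{f}(z) \asymp \|z\|^{-(3-\alpha)}$ with $\alpha$ close to $1$ from above, possibly dressed up by a slowly varying factor chosen to maximise the near-diagonal singularity of $f$ while keeping $\int \widehat{f}(z)/(1+\|z\|^2)\,\d z$ finite. The target is the lower Lyapunov bound $\underline{\gamma}(k) \ge A_1 \exp(A_1 k^\beta)$ from the abstract. I would produce this by comparing the nonlinear solution from below with the one obtained upon replacing $\sigma$ by the constant $\inf_{|z|\le M}|\sigma(z)|$ for a suitable $M$ (the hypotheses $\sigma^{-1}\{0\}=\{0\}$, $\sigma$ bounded, and $u(0)\equiv 1$ keep $|\sigma(u)|$ bounded below on the relevant event), then applying the Feynman--Kac moment formula for the parabolic Anderson model and iterating across time slices; the chosen singularity of $f$ is precisely what drives the double-exponential growth in $k$.

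\emph{Local linearization and Gaussian control.} Fix $(t_0,x_0)$ and $\varepsilon>0$ small. I would decompose, for $y \in B(x_0,\varepsilon)$,
\[
    u(t_0,y) = u(t_0,x_0) + M_\varepsilon(y) + R_\varepsilon(y),
\]
where $M_\varepsilon$ is the Walsh stochastic integral of the heat kernel against $\sigma(u)\eta$ restricted to a thin space-time slab $[t_0-\delta\,,t_0]\times B(x_0,\varepsilon')$ with $\delta\downarrow 0$ as $\varepsilon\downarrow 0$, and $R_\varepsilon$ is an error that is spatially H\"older continuous near $x_0$. Inside the slab one freezes $\sigma(u)$ at $\sigma(u(t_0,x_0))$, producing a conditionally centered Gaussian field $\widetilde{M}_\varepsilon$ whose increment variance is proportional to $|\sigma(u(t_0,x_0))|^2$ times an explicit functional of $f$ and the heat kernel. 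The Lipschitz bound on $\sigma$ controls $M_\varepsilon - \widetilde{M}_\varepsilon$ in every $L^k$, and Kolmogorov's criterion upgrades this to a uniform continuity statement in $y$. On the event $\{u(t_0,x_0)\neq 0\}$, the conditional increment variance of $\widetilde{M}_\varepsilon$ inherits a lower bound from the super intermittency of the previous step via the moment-to-covariance dictionary for Gaussian fields; a Belyaev/Borell zero-one dichotomy then forces $\mathrm{Osc}_{\widetilde{M}_\varepsilon}(x_0) \in \{0\,,\infty\}$ almost surely, and the failure of a Dudley-type entropy condition rules out the finite alternative. Transferring this back to $u(t_0,\cdot)$ via the controls on $R_\varepsilon$ and $M_\varepsilon-\widetilde{M}_\varepsilon$ yields the claim.

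\emph{Main obstacle.} The crux is the construction step: obtaining a lower bound on Lyapunov exponents that grows like $\exp(A_1 k^\beta)$ from a correlation that is still integrable in Dalang's sense is a delicate balance, since the two requirements pull in opposite directions. Naive Picard-style iteration delivers only polynomially growing Lyapunov exponents; the doubly-exponential rate requires a fine interaction between the near-diagonal singularity of $f$ and the heat semigroup across all moments simultaneously. Once Step (i) is in hand, the local linearization and Gaussian zero-one arguments should follow a fairly standard pattern.
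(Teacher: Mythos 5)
There is a genuine gap, and it is conceptual: you have made ``super intermittency'' the engine of the blowup, but under the hypotheses of this theorem ($\sigma$ bounded) there is no super intermittency to exploit, and the paper's proof never uses the moment \emph{lower} bound \eqref{LE:LB}. That lower bound is proved only for $\sigma(z)=z$ via the Feynman--Kac formula and is recorded (Remark \ref{rem:LE}) merely as evidence for Conjecture \ref{Conj2}; for bounded $\sigma$ Proposition \ref{pr:LyapunovExp} shows the moments are essentially Gaussian, $\E(|u(t,x)|^k)\lesssim(1+kt)^{k/2}$. Your plan to manufacture a doubly-exponential Lyapunov lower bound by comparison with a constant-$\sigma$ equation cannot work: with constant $\sigma$ the solution is Gaussian, so its moments grow like $k^{k/2}$, and the Feynman--Kac moment identity you invoke is specific to multiplicative noise. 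The actual driver of the blowup is entirely different: one chooses $f$ so that the \emph{linear} Gaussian field $Z(t,\cdot)$ already has unbounded oscillations on every ball. Concretely, $f(x)\asymp\|x\|^{-2}[\log(1/\|x\|)]^{-\alpha}$ with $1<\alpha<2$ (Theorem \ref{th:corr}), which sits at the critical Riesz exponent for Dalang's condition with a logarithmic correction; the canonical metric of $Z(t,\cdot)$ is then $d(x,x')\asymp[\log(1/\|x-x'\|)]^{-(\alpha-1)/2}$, the Dudley--Fernique entropy integral over a grid of mesh $\delta$ diverges like $[\log(1/\delta)]^{1-\alpha/2}$, and Borell's inequality concentrates the maximum (Proposition \ref{pr:max:Z}). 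Your concrete suggestion $\widehat f(z)\asymp\|z\|^{-(3-\alpha)}$ with $\alpha$ near $1$ is strictly subcritical: for any genuine Riesz kernel the solution is H\"older continuous and the theorem is false for that $f$.

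A second, technical gap: you propose to control the linearization error by Kolmogorov's continuity criterion. That cannot work here, because every modulus in sight is logarithmic rather than polynomial in $\|x-x'\|$; indeed the whole point is that $u(t,\cdot)$ is nowhere continuous. The paper instead shows (Proposition \ref{pr:localization}) that the error $D_t(x,y)=(\nabla_{\bm\varepsilon}u)(t,x)-\sigma(u(t,x))(\nabla_{\bm\varepsilon}Z)(t,x)$ has $k$-th moments of order $(A\sqrt k)^k[\log(1/\|\bm\varepsilon\|)]^{-3k(\alpha-1)/4}$, i.e.\ it is sub-Gaussian with a variance proxy that beats the size $[\log(1/\|\bm\varepsilon\|)]^{-(\alpha-1)/2}$ of the main terms by a definite logarithmic margin, and then runs a union bound over the $\asymp\delta^{-3/2}$ grid points: the tail exponent $[\log(1/\delta)]^{(\alpha+1)/2}$ dominates the entropy $\tfrac32\log(1/\delta)$ precisely because $(\alpha+1)/2>1$. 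This quantitative comparison of logarithmic exponents, together with the conditioning device $\sigma(u(t,x))\in[A,B]$ (made possible by $\E[u(t,x)]=1$ and $\sigma^{-1}\{0\}=\{0\}$), is what replaces both your zero--one dichotomy and your Kolmogorov step. Your overall architecture (choose a critical $f$, linearize locally, use Gaussian analysis) matches the paper's, but the two load-bearing steps --- where the divergence comes from, and how the error is absorbed --- are not the ones you describe.
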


This sort of extremely bad behavior of SPDEs has been observed earlier
only for simpler, constant-coefficient SPDEs \cite{DL1,DL2,FK2}
and/or exactly-solvable ones \cite[Theorem 1.2]{MP} that are forced by
``very wild,'' non-Gaussian noise terms. We believe that
the methods of the present paper are novel, in addition to being general enough to include
a variety of nonlinear SPDEs that are driven by Gaussian white-noise forcing terms. For
a non-trivial variation of Theorem \ref{th:conditional}, see Theorem \ref{th:local} below.

Before we describe that variation,  we first would like to explain why 
we consider equations on $\R_+\times\R^n$
only when $n=3$: Spatial dimension three is the smallest dimension
in which we know how to establish the blowup results of
Theorem \ref{th:conditional} and the next theorem.

\begin{theorem}\label{th:local}
	If $0<\inf_{z\in\R}\sigma(z) \le \sup_{z\in\R}\sigma(z)<\infty$,
	then there exist correlation functions $f:\R^3\to\R_+$ that satisfy 
	\eqref{cond:Dalang} and
	\[
		\P  \left\{ \text{\rm Osc}_{u(t)}(x)=\infty\right\}=1
		\qquad\text{for every $(t\,,x)\in(0\,,\infty)\times\R^3$}.
	\]
\end{theorem}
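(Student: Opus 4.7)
The plan is to reuse the construction from Theorem~\ref{th:conditional} together with the super-intermittency bound $\underline{\gamma}(k)\ge A_1\exp(A_1 k^\beta)$ stated in the abstract, exploiting the hypothesis $0<\inf\sigma\le\sup\sigma<\infty$ to remove the conditioning on $\{u(t\,,x)\ne 0\}$ that appears in Theorem~\ref{th:conditional}. First I would take $f$ to be any spatial correlation produced by the super-intermittency theorem; such an $f$ satisfies \eqref{cond:Dalang}, so Theorem~\ref{th:Dalang} yields a unique random-field solution $u$, and $\widehat{f}$ is arranged to have just enough high-frequency content that the Dudley entropy integral for the linear stochastic heat equation $W(t\,,x):=\int_0^t\int_{\R^3}G_{t-s}(x-y)\,\eta(\d s\,\d y)$ diverges on every ball.

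Next I would establish a local linearization around an arbitrary fixed $(t_0\,,x_0)$. Starting from the mild formulation
\[
	u(t_0\,,y)-u(t_0\,,x_0)=\int_0^{t_0}\int_{\R^3}\bigl[G_{t_0-s}(y-z)-G_{t_0-s}(x_0-z)\bigr]\sigma(u(s\,,z))\,\eta(\d s\,\d z),
\]
the Burkholder-Davis-Gundy inequality and the bounds $m:=\inf\sigma>0$ and $M:=\sup\sigma<\infty$ produce two-sided moment estimates
\[
	c_k m^k V(y\,,x_0)^{k/2}\le\E\bigl[|u(t_0\,,y)-u(t_0\,,x_0)|^k\bigr]\le C_k M^k V(y\,,x_0)^{k/2},
\]
where $V(y\,,x_0)$ is the variance of the Gaussian increment $W(t_0\,,y)-W(t_0\,,x_0)$. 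In particular, the $u$-increments are both sub- and super-Gaussian at the scale of $W$, with constants uniform in $y$ and $x_0$.

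A direct Fourier computation identifies $V(y\,,x_0)$ as comparable to $\int_{\R^3}(1-\cos((y-x_0)\cdot z))\widehat{f}(z)/\|z\|^2\,\d z$ for $t_0$ of order one, and by the choice of $f$ this pseudometric violates Dudley's metric-entropy condition on every ball. Belyaev's zero-one law for separable centered Gaussian processes then forces $W(t_0\,,\cdot)$ to have unbounded oscillations at every point of $\R^3$ almost surely. The matching lower half of the moment comparison transfers this blowup to $u$: running a Paley-Zygmund/chaining argument along dyadic grids inside $B(x_0\,,\varepsilon)$, the super-exponential lower bound on $\underline{\gamma}(k)$ supplies enough higher-moment room to upgrade the lower $L^2$-bound into a uniform pathwise lower bound on the dyadic increments of $u$.

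The main obstacle will be this transfer step, where one must show that the non-Gaussian drift $\sigma(u)-\sigma(u(t_0\,,x_0))$ cannot conspire to cancel the Gaussian blowup on a positive-probability event. Second-moment comparability alone does not preserve Belyaev's dichotomy, and the argument needs uniform-in-$y$ lower-tail estimates that are simultaneously valid on grids of size $e^{O(\varepsilon^{-3})}$ in $B(x_0\,,\varepsilon)$. This is where the quantitative strength of $\underline{\gamma}(k)\ge A_1\exp(A_1 k^\beta)$ enters in an essential way: stretched-exponential moment growth is the minimal Paley-Zygmund input that matches the chaining budget. Finally, the hypothesis $\inf\sigma>0$ removes the conditioning $\{u(t_0\,,x_0)\ne 0\}$ present in Theorem~\ref{th:conditional}, since uniform ellipticity of the noise coefficient combined with a Malliavin-calculus argument shows $\P\{u(t_0\,,x_0)=0\}=0$.
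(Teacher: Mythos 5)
Your overall architecture---Gaussian linearization of $u$ around $(t_0,x_0)$, divergence of the Dudley entropy for the linear field, then a transfer to $u$ using $\inf\sigma>0$ to dispense with the conditioning---matches the paper's strategy in outline. But two of your key ingredients are wrong, and the step you yourself flag as "the main obstacle" is exactly the one your proposal does not close. First, the super-intermittency bound \eqref{LE:LB} is proved in Theorem \ref{th:LyapunovExp} only for $\sigma(z)=z$, which is unbounded and therefore incompatible with the hypothesis $\sup\sigma<\infty$ of Theorem \ref{th:local}. For bounded $\sigma$ the relevant moment bound is Proposition \ref{pr:LyapunovExp}, which gives Gaussian-type growth $(1+kt)^{k/2}$; Remark \ref{rem:LE} makes clear that \eqref{LE:LB} is included as evidence for Conjecture \ref{Conj2}, not as an ingredient of the blowup proofs. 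Far from being "the minimal Paley--Zygmund input that matches the chaining budget," stretched-exponential moment growth would ruin the argument: the transfer step requires a union bound over the $\asymp\delta^{-3/2}$ grid points of $\Pi(\delta)$, and that union bound survives only because the linearization error has $k$-th moments of order $(A\sqrt{k})^k\,[\log(1/\delta)]^{-3k(\alpha-1)/4}$ (Proposition \ref{pr:localization}), i.e.\ sub-Gaussian tails of size $\exp(-C[\log(1/\delta)]^{(\alpha+1)/2})$, which beats the cardinality $\delta^{-3/2}$.

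Second, two-sided moment comparability of the increments of $u$ with those of $Z$ is not enough, as you concede, and "Paley--Zygmund along dyadic grids" does not repair it: one needs to rule out that the nonlinear remainder cancels the Gaussian maximum simultaneously over the whole grid on a positive-probability event. The paper's resolution is quantitative, not just comparative: Proposition \ref{pr:localization} shows the error $D_t(x,y_i)=(\nabla_{\bm\varepsilon}u)(t,x)-\sigma(u(t,x))(\nabla_{\bm\varepsilon}Z)(t,x)$ is smaller than the main term by a factor $[\log(1/\delta)]^{-(\alpha-1)/4}$ \emph{and} sub-Gaussian, so $\max_i|D_t(x,y_i)|=o(\varrho(t,\delta))$ with high probability, while Borell's inequality forces $\max_i|Z(t,y_i)-Z(t,x)|\gtrsim\varrho(t,\delta)$. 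Finally, your Malliavin-calculus step to show $\P\{u(t_0,x_0)=0\}=0$ misdiagnoses why the conditioning appears in Theorem \ref{th:conditional}: the issue there is not whether $u(t,x)$ vanishes but that $\sigma(u(t,x))$ can be arbitrarily small, which degrades the lower bound \eqref{last} on the leading Gaussian term. Under $\inf\sigma>0$ one has $\sigma(u(t,x))\ge\inf\sigma$ deterministically, so \eqref{last} follows at once from Proposition \ref{pr:max:Z} and no absolute-continuity argument is needed; this one-line observation is the entirety of the paper's derivation of Theorem \ref{th:local} from the proof of Theorem \ref{th:conditional}.
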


Theorems \ref{th:Dalang}, \ref{th:conditional}, and \ref{th:local} together imply that
there are models of $f$ that satisfy \eqref{cond:Dalang} such that,
for every $t>0$ fixed, the random function $u(t):\R^3\to\R$ has discontinuities of
the second kind. These theorems, particularly Theorem \ref{th:local}, fall short
of establishing the following conjectures.

\begin{conj}\label{Conj1}
	Under the hypotheses of Theorem \ref{th:conditional}, 
	there exist correlation functions $f:\R^3\to\R_+$ that satisfy 
	\eqref{cond:Dalang} and
	\begin{equation}\label{strong}
		\P\left\{ \text{\rm Osc}_{u(t)}(x)=\infty\text{ for all $(t\,,x)\in(0\,,\infty)\times\R^3$}\right\}=1.
	\end{equation}
\end{conj}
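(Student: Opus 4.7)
The plan is to split the conjecture into a spatial direction, handled by soft topology, and a temporal direction, which is the real obstacle. First reformulate the desired event: if $\psi:\R^3\to\R$ is locally unbounded at every point, meaning $\sup_{y\in B(x,\varepsilon)}|\psi(y)|=\infty$ for every $x\in\R^3$ and $\varepsilon>0$, then $\text{\rm Osc}_\psi(x)=\infty$ everywhere. Since the balls with rational centre and radius form a countable basis of $\R^3$, the conjecture is equivalent to showing that, almost surely, $\sup_{y\in B}|u(t,y)|=\infty$ for every such ball $B$ and every $t\in(0,\infty)$.

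For a fixed rational $t>0$, I first prove the stronger statement that $\text{\rm Osc}_{u(t)}(x)=\infty$ for every $x\in\R^3$ almost surely. The deterministic oscillation function $x\mapsto\text{\rm Osc}_\psi(x)$ is upper semi-continuous (an easy consequence of $B(y,\varepsilon/2)\subset B(x,\varepsilon)$ whenever $\|y-x\|<\varepsilon/2$), so $E_t:=\{x:\text{\rm Osc}_{u(t)}(x)=\infty\}$ is always a closed subset of $\R^3$. Theorem~\ref{th:conditional}, combined with the separately verified fact that $\P\{u(t,x)=0\}=0$ for each fixed $(t,x)$ under the standing hypotheses $\sigma^{-1}\{0\}=\{0\}$ and $u_0\equiv 1$, then gives $\P\{x\in E_t\}=1$ for every $x$. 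A countable intersection over $x\in\Q^3$ forces $E_t$ to be a.s.\ dense, and by closedness $E_t=\R^3$ almost surely. A further intersection over $t\in\Q\cap(0,\infty)$ produces the conjectured behaviour simultaneously at every rational $t$ and every $x$.

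The genuine obstacle is promoting the property from a dense set of times to every $t>0$. Time continuity of $u(\cdot,x)$ at fixed $x$ does not help in the naive direction, because $t\mapsto\sup_{y\in B}|u(t,y)|$ is only lower semi-continuous in $t$, which is the wrong direction to extend a set-theoretic statement from a dense subset to the whole half-line. The natural route is a Markov argument: conditional on $\F_{t_0}$ at a rational $t_0>0$, the shifted process $s\mapsto u(t_0+s,\cdot)$ solves \eqref{SHE} with initial data $u(t_0,\cdot)$ driven by a noise independent of $\F_{t_0}$. If one could prove the short-time claim that, for a class of rough initial data almost surely containing $u(t_0,\cdot)$, the solution is locally unbounded on every ball for \emph{every} $s\in(0,\delta)$, then a union over $t_0\in\Q\cap(0,\infty)$ would cover all $t>0$. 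A pathwise formulation of the same idea is the decomposition $u(t,x)-u(t_0,x)=R(t,t_0,x)+N(t,t_0,x)$ for rational $t_0<t$, where $R$ collects the noise on $[0,t_0]$ via the smooth kernel difference $G_{t-s}-G_{t_0-s}$ and $N$ is the freshly driven stochastic integral on $[t_0,t]\times\R^3$; infinite spatial oscillation of $u(t,\cdot)$ inherits from that of $u(t_0,\cdot)$ as soon as $R+N$ is $x$-continuous.

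The main difficulty is establishing this short-time regularity. The initial data $u(t_0,\cdot)$ is itself locally unbounded on every ball by the earlier step, so $G_s*u(t_0,\cdot)$ need not converge classically and the argument has to stay inside the martingale-measure framework throughout. Concretely, one must exhibit a single correlation function $f$ for which the blow-up mechanism of Theorem~\ref{th:conditional} and sufficient short-time $x$-regularity of the freshly driven integral $N$ on $[t_0,t_0+\delta]$ coexist, and then control $N$ uniformly in small $\delta$ with bounds strong enough to preserve the infinite oscillation inherited from $u(t_0,\cdot)$. This quantitative trade-off between blow-up and short-time smoothness is where I expect the bulk of the work to lie, and where the techniques that prove Theorem~\ref{th:conditional} do not appear to extend directly.
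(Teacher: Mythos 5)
This statement is a \emph{conjecture}: the paper explicitly does not prove \eqref{strong}, and states that Theorems \ref{th:conditional} and \ref{th:local} ``fall short'' of it. Your proposal does not close that gap either, and its very first substantive step contains a concrete error. You invoke ``the separately verified fact that $\P\{u(t,x)=0\}=0$'' in order to strip the conditioning from Theorem \ref{th:conditional} and conclude $\P\{x\in E_t\}=1$ for each fixed $x$. No such fact is verified anywhere, and it is precisely the Open Problem the authors single out as the obstruction: what follows from $\E[u(t,x)]=1$ is only $\P\{u(t,x)>0\}>0$, and the strongest known positivity result in this setting (Theorem \ref{thm:nonnegative}, from Chen and Huang) gives non-negativity, not non-vanishing. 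Mueller-type strict positivity arguments are unavailable here exactly because the solution is not H\"older continuous --- indeed it has unbounded oscillations. Under the hypotheses of Theorem \ref{th:conditional}, $\sigma$ vanishes at $0$, so the event $\{u(t,x)=0\}$ cannot be ruled out by any argument in the paper. Without this, your dense-plus-closed argument for $E_t=\R^3$ at a fixed rational $t$ already fails, since you cannot even get $\P\{x\in E_t\}=1$ for a single $x$.

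The remainder of your outline (upper semi-continuity of the oscillation function, hence closedness of $E_t$; countable intersection over $\Q^3$ and over rational $t$; a Markov/decomposition argument to pass from rational to all $t$) is a reasonable skeleton and broadly consistent with what the authors indicate: they state that their methods would yield Conjecture \ref{Conj1} \emph{if} the strict positivity question ($\P\{u(t,x)>0$ for all rational $t$ and $x\in\Q^3\}=1$) were answered affirmatively. But you yourself concede that the temporal extension --- short-time regularity of the freshly driven integral started from rough, everywhere-locally-unbounded data --- is not established. So the proposal has two genuine gaps: one you acknowledge (the passage from rational to all $t$), and one you do not (the unconditional blow-up at a fixed space-time point, which is the paper's stated open problem). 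As written, this is a plan contingent on an open question, not a proof.
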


\begin{conj}\label{Conj2}
Suppose $\sigma(z)=z$ for all $z\in \R$. Then, there exist correlation functions $f : \R^3 \to \R_+$ that satisfy \eqref{cond:Dalang} and \eqref{strong} holds. 
 
\end{conj}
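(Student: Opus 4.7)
The plan has three parts: (a) establish a super-intermittency estimate for the parabolic Anderson model (PAM); (b) translate this super-intermittency into pointwise blowup of $\text{\rm Osc}_{u(t)}(x)$ via a local linearization; (c) promote the pointwise statement to the simultaneous one in \eqref{strong}. Ingredients (a)--(b) parallel the arguments underlying Theorems \ref{th:conditional}--\ref{th:local}, but must be redone for multiplicative noise; (c) is the new ingredient specific to the conjecture.

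\textbf{Step (a): Super-intermittency for PAM.} With $\sigma(z)=z$ and $u(0)\equiv 1$, the Hu--Nualart moment formula gives, for independent Brownian motions $B^1,\ldots,B^k$ starting at $x$,
\[
\E\left[u(t,x)^k\right] = \E\exp\left(\sum_{1\le i<j\le k}\int_0^t f\left(B^i_s-B^j_s\right)\d s\right).
\]
By a Donsker--Varadhan variational principle, $\gamma(k):=\lim_{t\to\infty}t^{-1}\log\E[u(t,x)^k]$ equals the ground-state energy of the attractive $k$-body Schr\"odinger operator $-\tfrac12\sum_i\Delta_{x_i}-\sum_{i<j}f(x_i-x_j)$ on $\R^{3k}$. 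Because the abstract already asserts an estimate of the form $\underline{\gamma}(k)\ge A_1\exp(A_1 k^\beta)$ for the class of SPDEs constructed in the paper, one expects the same $f$ to work here: it is chosen with sufficient concentration of $\widehat f$ near zero to force clustering of all $k$ Brownian trajectories, yet tempered so that \eqref{cond:Dalang} holds. The Hartree/mean-field ansatz for the ground state should yield the claimed super-exponential growth.

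\textbf{Step (b): Local linearization.} Fix $(t_0,x_0)$ and, for small $s>0$ and $y\in B(x_0,\delta)$, expand
\[
u(t_0+s,y)=\left(p_s*u(t_0,\cdot)\right)(y) +\int_{t_0}^{t_0+s}\!\!\int_{\R^3} p_{t_0+s-r}(y-z)\,u(r,z)\,\eta(\d r\,\d z).
\]
Decompose the stochastic integral as $u(t_0,x_0)\cdot W(s,y-x_0)+R(s,y)$, where $W$ is the centered Gaussian field obtained by freezing $u(r,z)\equiv u(t_0,x_0)$ in the integrand, and $R$ is a remainder. Since the PAM with $u(0)\equiv 1$ has a strictly positive solution almost surely, and since the super-intermittency of Step (a) forces a heavy-tailed spectral measure for $W$, an application of Borell--TIS together with a Sudakov-type minoration shows $\text{\rm Osc}_W(0)=\infty$ almost surely; controlling $R$ via the chaos expansion transfers this to $\text{\rm Osc}_{u(t_0)}(x_0)=\infty$ a.s. (This follows the template of the proof of Theorem \ref{th:local}, but with the Gaussian ``linearization kernel'' now weighted by the random factor $u(t_0,x_0)>0$.)

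\textbf{Step (c) and main obstacle.} A key observation is that $x\mapsto\text{\rm Osc}_{u(t)}(x)$ is deterministically upper semicontinuous, so the set $\{x:\text{\rm Osc}_{u(t)}(x)=\infty\}$ is closed in $\R^3$; consequently it suffices to show that this set is a.s.\ dense. For each fixed $t>0$, applying Step (b) to each $q\in\Q^3$ and intersecting over the countable family $\Q^3$ produces a probability-one event on which $\text{\rm Osc}_{u(t)}(x)=\infty$ for every $x\in\R^3$. The remaining difficulty, and the \emph{main obstacle}, is to make the exceptional null set independent of $t\in(0,\infty)$. One would like to apply Step (b) at a countable dense set of times $T\subset(0,\infty)$ using the Markov property of the SPDE and the time-homogeneity of the noise, then extend to all $t>0$ by a time-regularity lemma. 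However, the oscillation functional $\text{\rm Osc}_{u(t)}(x)$ is not a continuous function of $u(t,\cdot)$ in any reasonable function-space topology, and there is no obvious joint upper semicontinuity in $(t,x)$; this is precisely the technical gap that prevents the authors from removing the word ``conjecture.'' A possible route is to establish a zero--one law for the germ $\sigma$-field of $\eta$ at each $(t,x)$ and show that Step (b) in fact produces positive probability for the simultaneous event, which combined with the zero--one law would yield probability one for all $(t,x)$ at once.
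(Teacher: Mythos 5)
The statement you are trying to prove is Conjecture~\ref{Conj2}: the paper offers no proof of it, and indeed Remark~\ref{rem:LE} states explicitly that the super-intermittency bound \eqref{LE:LB} ``provides a certain amount of evidence toward the truth of Conjecture~\ref{Conj2}, though it certainly does not prove'' it. So there is no argument in the paper to compare yours against, and your proposal does not close the gap either. The most serious problem is in your Step~(b), and it is one you do not flag. Every quantitative ingredient of the paper's blowup mechanism --- the increment bounds of Propositions~\ref{pr:u(x)-u(x')} and~\ref{pr:u(t)-u(t')}, the moment bound of Proposition~\ref{pr:LyapunovExp}, and above all the local-linearization error estimate of Proposition~\ref{pr:localization} --- is proved under the hypothesis that $\sigma$ is \emph{bounded}, which is used to replace $\|\sigma(u(s,y))\sigma(u(s,y'))\|_{k/2}$ by a constant inside the BDG integrals. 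For $\sigma(z)=z$ this factor is $\|u(s,y)u(s,y')\|_{k/2}$, and by \eqref{LE:LB} the $k$-th moments grow like $\exp(\exp(A k^{1/\alpha})t)$; the resulting bounds on the remainder $R$ are nowhere near the $(Ck)^{k/2}[\log(1/\varepsilon)]^{-3k(\alpha-1)/4}$ decay needed for the sub-Gaussian tail estimate and the Borell--TIS comparison in the proof of Theorem~\ref{th:conditional}. Your phrase ``controlling $R$ via the chaos expansion transfers this'' is exactly the step that fails, and no substitute argument is given. Relatedly, your claim that super-intermittency ``forces a heavy-tailed spectral measure for $W$'' inverts the logic of the paper: the Gaussian field $W$ (the paper's $Z$) has law determined by $f$ alone, and its unbounded oscillations come from the choice $1<\alpha<2$ in Proposition~\ref{pr:max:Z}, not from any property of the nonlinear solution.

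Two further points. First, even granting a local linearization, the unconditional pointwise statement $\mathrm{Osc}_{u(t)}(x)=\infty$ a.s.\ requires $u(t,x)\neq 0$ a.s.\ when the linearization is weighted by $\sigma(u(t,x))=u(t,x)$; for the parabolic Anderson model this is precisely the strict-positivity question that the paper isolates as its Open Problem and cannot resolve (Theorem~\ref{thm:nonnegative} gives only nonnegativity at rational points). Second, your Step~(c) observation is correct and worth keeping: the oscillation function of any $\psi$ is upper semicontinuous, so $\{x:\mathrm{Osc}_{u(t)}(x)=\infty\}$ is closed, and for \emph{fixed} $t$ a countable intersection over $\Q^3$ plus density would give all of $\R^3$. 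But the passage to all $t$ simultaneously, which you correctly identify as the main obstacle, is left entirely open; the proposed routes (Markov property plus time regularity, or a germ-field zero--one law) are named but not carried out, and the paper's Theorems~\ref{th:conditional} and~\ref{th:local} likewise fix $(t,x)$ in advance. In short, the proposal is a reasonable research plan, but Steps~(b) and~(c) each contain an unproved assertion that is at least as hard as the conjecture itself.
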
  
The methods of this paper are efficient enough to prove Conjecture \ref{Conj1}
provided that the answer to the following is ``yes'':
\begin{OP}
	{ Under the hypotheses of Theorem \ref{th:conditional}. 
	}Is it true that 
	\[
		\P\left\{ u(t\,,x) >0  \text{ for all rational $t\ge0$ and $x\in\Q^3$}\right\}=1?
	\]
\end{OP}
The only strict positivity type of theorem for SPDEs that we
are aware of is the celebrated result of Mueller \cite{Mueller1};
see also \cite[pp.\ 134--135]{Mueller2}. But that result,
and its proof, rely crucially on the {\it a priori} H\"older continuity of the solution.
This is a luxury that we do not have in the present setting, as is
corroborated by Theorems \ref{th:conditional}
and \ref{th:local}. The best-known result, along these lines, is the following
consequence of Corollary 1.2 of Chen and Huang \cite{CH}.

\begin{theorem}[Chen and Huang \protect{\cite[Corollary 1.2]{CH}}]\label{thm:nonnegative}
	If, additionally, $\sigma(0)=0$, then 
	\[
		\P\left\{ u(t\,,x) \ge 0  \text{ for all rational $t\ge0$ and $x\in\Q^3$}\right\}=1.
	\]
\end{theorem}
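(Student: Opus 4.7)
The plan is to prove that $u(t,x)\geq 0$ almost surely at each individually fixed $(t,x)\in(0\,,\infty)\times\R^3$, and then take an intersection over the countable set of rational space-time points to obtain the stated conclusion. Since $\sigma$ is globally Lipschitz and $\sigma(0)=0$, the modified coefficient $\tilde{\sigma}(z):=\sigma(z^+)$, where $z^+:=\max(z,0)$, is also globally Lipschitz, coincides with $\sigma$ on $[0,\infty)$, and vanishes on $(-\infty,0]$. Let $\tilde{u}$ be the unique random-field solution of \eqref{SHE} with $\sigma$ replaced by $\tilde{\sigma}$ and with $\tilde u(0)\equiv 1$. If $\tilde{u}(t,x)\geq 0$ almost surely for every fixed $(t,x)$, then $\sigma(\tilde u)=\tilde\sigma(\tilde u)$ almost surely, and the uniqueness part of Theorem \ref{th:Dalang} forces $\tilde u$ and $u$ to be modifications of each other; the theorem then follows by separability.

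To establish non-negativity of $\tilde u$, I would regularize the noise by a smooth, compactly-supported, non-negative space-time mollifier $\rho_\varepsilon$ with $\int\rho_\varepsilon=1$, and set $\eta_\varepsilon:=\eta\ast\rho_\varepsilon$. For each outcome $\omega$, the field $\eta_\varepsilon(\cdot\,,\cdot\,,\omega)$ is smooth in $(t,x)$, so the semilinear parabolic problem
\[
\partial_t u_\varepsilon = \tfrac{1}{2}\Delta u_\varepsilon + \tilde{\sigma}(u_\varepsilon)\,\eta_\varepsilon,
\qquad u_\varepsilon(0)\equiv 1,
\]
admits a unique classical solution. Because $\tilde\sigma(z)\,\eta_\varepsilon(t,x)=0$ whenever $z\leq 0$, the constant function $v\equiv 0$ is a pathwise subsolution that lies below $u_\varepsilon(0)\equiv 1$. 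The standard comparison principle for semilinear parabolic PDEs with a Lipschitz reaction term then yields $u_\varepsilon\geq 0$ everywhere, almost surely.

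The remaining task is to verify that $u_\varepsilon(t,x)\to\tilde u(t,x)$ in $L^2(\Omega)$ for each fixed $(t,x)$ as $\varepsilon\downarrow 0$. One recasts $u_\varepsilon$ as a Walsh--Dalang mild solution,
\[
u_\varepsilon(t,x) = 1 + \int_0^t\!\!\int_{\R^3}\bigl(G_{t-s}(x-\cdot)\,\tilde\sigma(u_\varepsilon(s,\cdot))\bigr)\!\ast\!\rho_\varepsilon(y)\,\eta(\d s\,\d y),
\]
subtracts the mild form of $\tilde u$, and applies the Burkholder--Davis--Gundy inequality together with the Lipschitz property of $\tilde\sigma$ to arrive at a Gronwall inequality whose inhomogeneous term is controlled by the Dalang hypothesis \eqref{cond:Dalang}. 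Passing to an almost-sure limit along a subsequence then gives $\tilde u(t,x)\geq 0$ almost surely, and the theorem follows.

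The principal obstacle is this last convergence step: the spatial correlation of $\eta_\varepsilon$ is an $\varepsilon$-mollification of $f$ rather than $f$ itself, so one must check that the second-moment estimates and the use of \eqref{cond:Dalang} can be made uniform in $\varepsilon$, and that the error term tends to $0$. Establishing non-negativity of $u_\varepsilon$ via comparison is conceptually simple once the regularization is in place; it is the quantitative passage to the limit, together with propagating the bound through the Picard--Gronwall scheme, that requires genuine care.
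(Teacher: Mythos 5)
First, a point of reference: the paper does not prove this statement at all --- it is quoted from Chen and Huang \cite[Corollary 1.2]{CH} --- so your attempt is competing with their proof rather than with anything in this manuscript. Your opening reduction is sound and is indeed the standard way to set this up: replace $\sigma$ by $\tilde\sigma(z):=\sigma(z^+)$, prove $\tilde u(t\,,x)\ge0$ almost surely at each fixed point, conclude that $\sigma(\tilde u)=\tilde\sigma(\tilde u)$ as integrands by Fubini, invoke the uniqueness clause of Theorem \ref{th:Dalang}, and intersect over the countably many rational space-time points.

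The gap is in the core step. You mollify $\eta$ in space \emph{and time}, solve the resulting equation pathwise as a classical PDE, and then recast $u_\varepsilon$ as a Walsh--Dalang integral against the original $\eta$ in order to run a BDG--Gronwall comparison with $\tilde u$. This fails for two related reasons. First, once $\rho_\varepsilon$ smooths in the time variable, $\eta_\varepsilon(s\,,\cdot)$ --- and hence $u_\varepsilon(s\,,\cdot)$ --- depends on the noise after time $s$; the integrand in your displayed mild form is therefore not predictable, so that identity is not a legitimate It\^o--Walsh stochastic integral and the isometry/BDG estimates you plan to use are unavailable. Second, and more fundamentally, this is exactly the Wong--Zakai problem: the pathwise (Stratonovich-type) solutions $u_\varepsilon$ do not converge to the It\^o--Walsh solution $\tilde u$, but rather, when they converge at all, to the solution of an equation carrying a correction term of order $f_\varepsilon(0)\,\tilde\sigma'(\cdot)\,\tilde\sigma(\cdot)$, where $f_\varepsilon$ is a mollification of $f$. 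Since $f(0)>0$ for any nontrivial correlation function --- and $f(0)=\infty$ for the correlations of Theorem \ref{th:corr} --- this correction does not vanish in the limit; see the Hairer--Pardoux reference \cite{HP} already cited in the paper. No amount of care in the Picard--Gronwall scheme repairs this, because the limit solves the wrong equation. If instead you mollify only in space so as to keep the noise white in time (which removes both obstructions), then the regularized problem is still a genuine SPDE, $v\equiv0$ is not a subsolution of a classical parabolic PDE, and the deterministic comparison principle you invoke does not apply; one then needs a comparison or positivity theorem for SPDEs driven by spatially smooth, temporally white noise, which is precisely the content of the cited result and is established there by a further spatial discretization reducing to systems of interacting SDEs (in the tradition of Gei{\ss}--Manthey, Shiga, and Mueller), not by pathwise PDE comparison. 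In short: the reduction is right, but the regularize-and-compare step must be replaced wholesale.
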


The remainder of this paper is devoted to proving Theorem \ref{th:conditional}.
At the end of the paper, we have also included a paragraph which outlines how
one can prove Theorem \ref{th:local} from Theorem \ref{th:conditional}.
In anticipation of those arguments
let us conclude the Introduction by introducing more notation that will
be used throughout the paper.

Throughout, let  $p_t(x) = p(t\,,x)$ denote the heat kernel in $\R^3$; that is,
\begin{equation}\label{p}
	p_t(x) := (2\pi t)^{-3/2} \e^{ -\|x\|^2/(2t)}
	\qquad\text{for all $x\in\R^3$ and $t>0$}.
\end{equation}
In particular, $p_t$ does not refer to the time derivative of the heat kernel;
rather the heat kernel itself.

We will use the following notation for shorthand. For any two functions $A,B:R\to\R$,
where $R$ is a topological space:
\begin{compactitem}
\item $A(r)\sim B(r)$ as $r\to\ r_0$ means $\lim_{r\to r_0}(A(r)/B(r))=1$ ;
\item $A(r)\propto B(r)$ for all $r\in R$ means that either $A\equiv B\equiv 0$
	on $R$ or $A(r)/B(r)$ is independent of $r\in R$;
\item $A(r)\lesssim B(r)$ [equiv.\ $B(r)\gtrsim A(r)$]
	for all $r\in R$ means that there
	exists a finite constant $c>1$ such that $A(r) \le c B(r)$ for all $r\in R$;
\item $A(r)\asymp B(r)$ for all $r\in R$ means that
	$ B(r)\lesssim A(r)\lesssim B(r)$ for all $r\in R$.
\end{compactitem}

Finally, let us recall that by a ``solution'' $u$ to \eqref{SHE} we mean
a ``mild solution.'' That is:
(i)
$u$ is a predictable random field --- with respect
to the Brownian filtration generated by the cylindrical Brownian motion
defined by $B_t(\phi):=\int_{[0,t]\times\R^3}\phi(y)\,\eta(\d s\,\d y)$,
for all $t\ge0$ and measurable $\phi:\R^3\to\R$ such that
$\<\phi\,,f*\phi\>_{L^2(\R^3)}<\infty$; and (ii)  $u$ solves the stochastic
integral equation,
\begin{equation}\label{mild}
	u(t\,,x) = 1 + \int_{[0,t]\times\R^3}
	p_{t-s}(y-x)\sigma(u(s\,,y))\,\eta(\d s\,\d y),
\end{equation}
where the stochastic integral is understood in the sense of 
Dalang \cite{Dalang} and Walsh \cite{Walsh}.
Finally, it might help to recall also that
\[
	\Cov[B_t(\phi_1)\,,B_s(\phi_2)] = \min(s\,,t)  \<\phi_1\,,f*\phi_2\>_{L^2(\R^3)},
\]
for all $s,t\ge0$ and measurable $\phi_1,\phi_2:\R^3\to\R$ such that
$\<\phi_i\,,f*\phi_i\>_{L^2(\R^3)}<\infty$ for $i=1,2$.

\section{Some classical function theory}

Recall that a function $f:\R^3\to(0\,,\infty)$ is said to be a \emph{correlation
function} if $f$ is locally integrable, with a nonnegative Fourier transform
$\widehat{f}$. The main goal of this section is to establish the following
quantitative variation on a certain form of  Wiener's tauberian theorem.
The following result will be used to show that there are 
many ``bad'' correlation functions on $\R^3$.

Throughout, define $\cB(r)$ to be the centered ball of radius $r$ about the origin;
that is,
\begin{equation}\label{B(r)}
	\cB(r) := \left\{x\in\R^3:\, \|x\|<r \right\}
	\qquad\text{for all $r>0$}.
\end{equation}

\begin{theorem}\label{th:corr}
	For every $\alpha>1$
	there are correlation functions $f:\R^3\to(0\,,\infty)$ such that:
	\begin{compactenum}
		\item $f,\widehat{f}>0$ on $\R^3$;
		\item $f$ is uniformly continuous on $\R^3\setminus\cB(r)$ for every $r>0$;
		\item There exists a nonincreasing function
			$\varphi:\R_+\to\R_+$ such that
			\begin{equation}\label{varphi}
				f(x) = \varphi(\|x\|)\quad
				\text{for all $x\in\R^3$};
			\end{equation}
		\item $f(x) \asymp \|x\|^{-2}\left[\log(1/\|x\|)\right]^{-\alpha}$
			uniformly for all $x\in\cB(1/\e)\setminus\{0\}$; and
		\item $\widehat{f}(x) \asymp {\|x\|^{-1}}\left[\log (\|x\|)\right]^{-\alpha}$
			uniformly for all $x\in \R^3\setminus\cB(1/\e)$.  
		\item $f$ satisfies Dalang Condition \eqref{cond:Dalang}.
	\end{compactenum}
\end{theorem}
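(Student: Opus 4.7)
The plan is to construct $f$ explicitly as a Gaussian mixture. Let $\psi:(0\,,\infty)\to[0\,,\infty)$ be a smooth, positive weight that is compactly supported in $(0\,,T]$ for some $T>0$, and satisfies $\psi(s)\asymp s^{-1/2}[\log(1/s)]^{-\alpha}$ uniformly for $s\in(0\,,1/\e^2)$. Define
\[
f(x) := \int_0^\infty p_s(x)\,\psi(s)\,\d s\qquad\text{for $x\in\R^3$}.
\]
Since $p_s(x)$ is a smooth, positive, radial function of $x$ that is strictly decreasing in $\|x\|$, and since $\widehat{p_s}(\xi)=\e^{-s\|\xi\|^2/2}>0$, Fubini's theorem gives
\[
f(x) = \varphi(\|x\|)\quad\text{and}\quad\widehat{f}(\xi) = \int_0^\infty \e^{-s\|\xi\|^2/2}\,\psi(s)\,\d s,
\]
where $\varphi:\R_+\to\R_+$ is strictly positive and nonincreasing. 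This establishes (i) and (iii) and makes $f$ a correlation function automatically.

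For (v), I would substitute $u=s\|\xi\|^2$ to obtain
\[
\widehat{f}(\xi) = \|\xi\|^{-2}\int_0^{T\|\xi\|^2}\e^{-u/2}\,\psi(u/\|\xi\|^2)\,\d u.
\]
The prescribed two-sided bound on $\psi$ and the slow-variation identity $\log(\|\xi\|^2/u)\sim 2\log\|\xi\|$ for bounded $u$ as $\|\xi\|\to\infty$ show that, on any bounded $u$-interval, $\psi(u/\|\xi\|^2)\asymp \|\xi\|\,u^{-1/2}[\log\|\xi\|]^{-\alpha}$; meanwhile the contribution from unbounded $u$ is killed by $\e^{-u/2}$. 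Combining these yields $\widehat{f}(\xi)\asymp\|\xi\|^{-1}[\log\|\xi\|]^{-\alpha}$ for $\|\xi\|\ge 1/\e$, which is (v). Property (iv) follows from the parallel substitution $s=\|x\|^2 u$, which gives
\[
f(x) = \|x\|^{-1}\int_0^{T/\|x\|^2}\frac{\e^{-1/(2u)}}{(2\pi u)^{3/2}}\,\psi(\|x\|^2 u)\,\d u,
\]
and the analogous analysis using $\log(1/(\|x\|^2 u))\sim 2\log(1/\|x\|)$ as $\|x\|\to 0$ gives $f(x)\asymp\|x\|^{-2}[\log(1/\|x\|)]^{-\alpha}$ on $\cB(1/\e)\setminus\{0\}$. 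Both computations are two faces of the same Karamata-type correspondence: the factor $s^{-1/2}$ in $\psi$ is calibrated to produce the pure-power $\|x\|^{-2}$ and $\|\xi\|^{-1}$ Riesz-type behavior in $\R^3$, and the slowly varying logarithmic factor is transmitted faithfully through both scalings.

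Property (vi) then follows directly from (v): in spherical coordinates
\[
\int_{\R^3}\frac{\widehat{f}(\xi)}{1+\|\xi\|^2}\,\d\xi \lesssim 1 + \int_1^\infty\frac{\d\rho}{\rho[\log\rho]^\alpha}<\infty
\]
because $\alpha>1$. For (ii), on $\R^3\setminus\cB(r)$ the integrand $p_s(x)\psi(s)$ and each of its $x$-derivatives is dominated uniformly in $x$ by an integrable function of $s$, so $f\in C^\infty(\R^3\setminus\cB(r))$ and is in particular uniformly continuous there.

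The main obstacle is producing matching upper and lower bounds \emph{uniformly} on $\cB(1/\e)\setminus\{0\}$ and on $\R^3\setminus\cB(1/\e)$, rather than merely asymptotically. In each case the mixing integral must be split into three ranges --- small $u$ (where $\psi$ blows up), bounded $u$ (where the main contribution lives and the slow-variation replacement is used), and large $u$ (where $\psi$ or $\e^{-u/2}$ is small) --- with the slow-variation error $\log(\|\xi\|^2/u)=2\log\|\xi\|-\log u$ controlled explicitly on the bounded range and the other two ranges shown to be of lower order. This quantitative Wiener--Tauberian bookkeeping is the technical core of the argument; once it is in place, (i)--(vi) fall into line.
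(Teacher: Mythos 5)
Your proposal is correct in substance and reaches the same destination by a genuinely different route. Both arguments realize $f$ as a radial heat-kernel mixture, $f(x)=\int_0^\infty p_s(x)\,\mu(\d s)$, and both transfer the behavior of the mixing measure near $s=0$ into the asymptotics of $f$ at the origin and of $\widehat f$ at infinity via an Abelian/Tauberian computation. The difference is in how the mixing measure is produced. The paper takes $\mu=U$, the $1$-potential measure of a subordinator whose L\'evy measure has density $r^{-3/2}(\log(1/r))^{\alpha}$ near $0$; this gives $\widehat f(z)=[1+\Phi(\|z\|^2/2)]^{-1}$ for free (hence property 5 directly from the Laplace exponent), but the two-sided bound $U[0,\varepsilon)\asymp\varepsilon^{1/2}[\log(1/\varepsilon)]^{-\alpha}$ needed for property 4 requires the weak-unimodality lemma of Khoshnevisan and Xiao, which rests on the strong Markov property --- the paper explicitly flags this as the ``decidedly probabilistic'' step. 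You instead prescribe an explicit absolutely continuous density $\psi(s)\asymp s^{-1/2}[\log(1/s)]^{-\alpha}$ near $0$, so the two-sided control of the mixing measure is built in and no probabilistic input is needed; the price is that both properties 4 and 5 must be extracted by the hand-crafted three-range splitting you describe, whereas the paper gets 5 essentially for free and only does the dyadic bookkeeping for 4. Your splitting is sound: on the bounded range the slowly varying factor passes through, the small range is killed by the Gaussian factor, and the remaining range is lower order after splitting at $u=1/\|x\|$ (resp.\ $u=\log^2\|\xi\|$). Two minor points to fix: ``compactly supported in $(0,T]$'' should just read ``supported in $(0,T]$,'' since $\psi$ must blow up at $0$; and in your verification of property 6 the integral $\int_1^\infty \rho^{-1}[\log\rho]^{-\alpha}\,\d\rho$ diverges at $\rho=1$ --- you should integrate from $\e$ (say) and absorb the compact part $\|\xi\|\le\e$ into the constant, exactly as your ``$1+$'' suggests you intended.
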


We will fix the notation, introduced in
Theorem \ref{th:corr},  for both $f$ and $\varphi$ from now on.

Interestingly enough, we are only aware of one proof. Though most of that proof
can be translated into the language of classical function theory --- specifically,
the theory of Bernstein functions, for example, as described in Schilling,
Song, and Vondra\v{c}ek  \cite{SSV} ---
our proof is decidedly probabilistic at a key point. The reason is that,
thus far, the unimodality result \eqref{weak:unimodal} below only has a
probabilistic derivation, as it depends crucially on the strong Markov property; 
see Khoshnevisan and Xiao \cite[Lemma 4.1]{KX} for details.

From now on, $\alpha>1$ is held fixed.
We follow an idea of Khoshnevisan and Foondun \cite{FK2}, and first define an absolutely-continuous
Borel measure $\nu$ on $(0\,,\infty)$ whose Radon--Nikodym density at $r$
blows up a little bit more slowly than $r^{-3/2}$ as $r\downarrow0$. Specifically, let
\[
	\frac{\d\nu}{\d r}(r) := \begin{cases}
		\dfrac{\left(\log(1/r)\right)^{\alpha}}{r^{3/2}}&\text{if $0< r < \e^{-1}$},\\
		0&\text{otherwise}.
	\end{cases}
\]
Because
\[
	\int_0^\infty (1\wedge r)\,\nu(\d r) =
	\int_0^{\e^{-1}} \frac{  \left(\log(1/r) \right)^{\alpha}}{r^{1/2}}  \d r <\infty,
\]
the structure theory of L\'evy processes --- see Sato \cite[Chapter 4]{Sato} --- tells us that
$\nu$ is the L\'evy measure of a  subordinator
$T :=\{T_t\}_{t\ge0}$ such that $T_0=0$ and
\[
	\E\exp\left( -\lambda T_t\right) = \exp(-t\Phi(\lambda))
	\qquad\text{for all $t,\lambda>0$},
\]
where
\begin{align*}
	\Phi(\lambda) &:= \int_0^\infty\left( 1-\e^{-\lambda t}\right)\,\nu(\d t)\\
	&=\int_0^{\e^{-1}}\frac{(1-\e^{-\lambda t}) \left(\log(1/t)\right)^{\alpha}
		}{t^{3/2}}\,\d t.
\end{align*}
The following lemma describes the asymptotic behavior of $\Phi$
near infinity.

\begin{lemma}\label{lem:Phi}
	$\Phi(\lambda) \sim (4\pi\lambda)^{1/2} \left(\log\lambda \right)^{\alpha}$
	as $\lambda\to\infty$.
\end{lemma}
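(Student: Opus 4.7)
The plan is to reduce $\Phi(\lambda)$ to a parameter-free integral by the substitution $u=\lambda t$, peel off the dominant prefactor $(\log\lambda)^\alpha$, and then pass to the limit by dominated convergence. After this change of variables,
\[
	\Phi(\lambda) = \lambda^{1/2}\int_0^{\lambda/\e}\frac{(1-\e^{-u})\left(\log(\lambda/u)\right)^\alpha}{u^{3/2}}\,\d u,
\]
and since $u\le\lambda/\e$ forces $\log(\lambda/u)\ge 1$ for $\lambda\ge1$, one can factor out $(\log\lambda)^\alpha$ to write $\Phi(\lambda) = \lambda^{1/2}(\log\lambda)^\alpha I(\lambda)$, where
\[
	I(\lambda) := \int_0^{\lambda/\e}\frac{1-\e^{-u}}{u^{3/2}}\left(\frac{\log(\lambda/u)}{\log\lambda}\right)^\alpha\,\d u.
\]
It then suffices to prove $I(\lambda)\to\int_0^\infty (1-\e^{-u})u^{-3/2}\,\d u = 2\sqrt{\pi}$, which is the value obtained by one integration by parts (take $v=-2u^{-1/2}$ and $w=1-\e^{-u}$; the boundary terms vanish because $1-\e^{-u}\sim u$ near $0$, and what remains is $2\int_0^\infty u^{-1/2}\e^{-u}\,\d u = 2\Gamma(1/2)$). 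Multiplying back by $\lambda^{1/2}(\log\lambda)^\alpha$ produces precisely $(4\pi\lambda)^{1/2}(\log\lambda)^\alpha$.

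For the convergence I would apply dominated convergence to
\[
	h_\lambda(u) := \frac{1-\e^{-u}}{u^{3/2}}\left(\frac{\log(\lambda/u)}{\log\lambda}\right)^\alpha \1_{(0,\lambda/\e]}(u).
\]
Pointwise, $h_\lambda(u)\to (1-\e^{-u})/u^{3/2}$ since $\log(\lambda/u)/\log\lambda = 1-(\log u)/\log\lambda\to 1$ for every fixed $u>0$. A $\lambda$-independent integrable majorant is constructed by splitting at $u=1$: for $u\in(0\,,1]$ and $\lambda\ge\e$, the identity $\log(\lambda/u)=\log\lambda+\log(1/u)$ together with $\log\lambda\ge 1$ yields $\bigl(\log(\lambda/u)/\log\lambda\bigr)^\alpha \le (1+\log(1/u))^\alpha$, and combined with $1-\e^{-u}\le u$ this bounds the integrand by $u^{-1/2}(1+\log(1/u))^\alpha$, which is integrable on $(0\,,1]$ for every $\alpha>0$; for $u\in[1\,,\lambda/\e]$ one has $\log(\lambda/u)\le\log\lambda$, so the log factor is at most $1$ and $h_\lambda(u)\le u^{-3/2}$, which is integrable on $[1\,,\infty)$.

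The one subtle point is the control of the log-ratio on $(0\,,1]$, where this ratio can exceed $1$; it is the uniform bound $1+\log(1/u)$, valid for all $\lambda\ge\e$, that makes dominated convergence applicable. Everything else reduces to elementary calculus, so the scheme above should deliver the lemma in a handful of lines.
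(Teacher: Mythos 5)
Your proof is correct and follows essentially the same route as the paper's: the substitution $u=\lambda t$, factoring out $\sqrt{\lambda}(\log\lambda)^\alpha$, dominated convergence with an explicit majorant for the log-ratio near the origin, and the evaluation $\int_0^\infty(1-\e^{-u})u^{-3/2}\,\d u=2\Gamma(1/2)=2\sqrt\pi$. The only (immaterial) differences are your choice of majorant $(1+\log(1/u))^\alpha$ versus the paper's $\log\lambda+s^{-1/(4\alpha)}+s^{1/(4\alpha)}$ bound, and computing the final integral by parts rather than via Tonelli's theorem.
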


\begin{proof}
	Thanks to scaling and a simple application of the dominated
	convergence theorem,
	\begin{align*}
		\Phi(\lambda) &= \sqrt\lambda\cdot\int_0^{\lambda\e^{-1}}
			\frac{(1-\e^{- s}) \left(\log(\lambda/s)\right)^{\alpha}}{s^{3/2}
			}\,\d s\\
		&\sim \sqrt{\lambda}\left(\log\lambda\right)^{\alpha} \cdot\int_0^\infty
			\frac{1-\e^{-s}}{s^{3/2}}\,\d s,
	\end{align*}
	as $\lambda\to\infty$.\footnote{Indeed, 
	$0< \log(\lambda/s) \lesssim \log\lambda +  s^{-1/(4\alpha)} + s^{1/(4\alpha)}$
	for all $\lambda>1$ and $s\in(0\,,\lambda/2)$.}
	Now write $1-\e^{-s}=\int_0^s\exp(-r)\,\d r$, plug this into the preceding integral and apply Tonelli's theorem in order to deduce the lemma.
\end{proof}

Next let $U$ denote the 1-potential measure of the subordinator $T$; that is,
for all Borel sets $A\subseteq\R_+$,
\begin{equation}\label{eq:U}
	U(A) := \int_0^\infty
	\P\{T_t\in A\}\e^{-t}\,\d t.
\end{equation}
Evidently, $U$ is a Radon probability measure on $\R_+$. We
refer to this property of $U$ many times in the sequel, and will sometimes
even do so tacitly. 

One can write
\eqref{eq:U} equivalently as follows:
\(
	\int g\,\d U = \E g(T_S),
\)
for all bounded Borel functions $g:\R_+\to\R$, where
$S$ denotes an independent random variable with an exponential,
mean-one distribution.

The following estimates the $U$-measure of a small interval about the origin.

\begin{lemma}\label{lem:U}
	$U[0\,,\varepsilon)\asymp \varepsilon^{1/2}[\log(1/\varepsilon)]^{-\alpha}$
	for all $\varepsilon\in(0\,,\e^{-1})$.
\end{lemma}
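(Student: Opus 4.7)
The plan is to transit through the Laplace transform of $U$. Applying Fubini to \eqref{eq:U} together with the identity $\E\exp(-\lambda T_t)=\exp(-t\Phi(\lambda))$ yields
\[
	\hat U(\lambda):=\int_0^\infty \e^{-\lambda t}\,U(\d t)=\int_0^\infty \e^{-t(1+\Phi(\lambda))}\,\d t=\frac{1}{1+\Phi(\lambda)},
\]
so Lemma \ref{lem:Phi} gives $\hat U(\lambda)\asymp \lambda^{-1/2}[\log\lambda]^{-\alpha}$ as $\lambda\to\infty$.

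The upper bound on $U[0,\varepsilon)$ then follows by exploiting $\e^{-t/\varepsilon}\ge \e^{-1}$ on $[0,\varepsilon)$ and evaluating at $\lambda=1/\varepsilon$:
\[
	\e^{-1}\,U[0,\varepsilon)\le \int_0^\varepsilon \e^{-t/\varepsilon}\,U(\d t)\le \hat U(1/\varepsilon)\lesssim \varepsilon^{1/2}[\log(1/\varepsilon)]^{-\alpha}.
\]

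For the matching lower bound I would produce a pointwise lower bound on $\P\{T_t<\varepsilon\}$ via an exponential Chebyshev split: since
\[
	\e^{-t\Phi(\lambda)}=\E\e^{-\lambda T_t}\le \P\{T_t<\varepsilon\}+\e^{-\lambda\varepsilon},
\]
setting $\lambda=1/\varepsilon$ gives $\P\{T_t<\varepsilon\}\ge \e^{-t\Phi(1/\varepsilon)}-\e^{-1}$. By Lemma \ref{lem:Phi}, there exists $c=c(\alpha)>0$ such that $t\Phi(1/\varepsilon)\le 1/2$ whenever $t\le c\,\varepsilon^{1/2}[\log(1/\varepsilon)]^{-\alpha}$ and $\varepsilon$ is sufficiently small; on that range of $t$ we then have $\P\{T_t<\varepsilon\}\ge \e^{-1/2}-\e^{-1}>0$. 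Inserting this bound into \eqref{eq:U} and integrating against $\e^{-t}\,\d t$ over the interval $[0\,,c\,\varepsilon^{1/2}[\log(1/\varepsilon)]^{-\alpha}]$ yields $U[0,\varepsilon)\gtrsim \varepsilon^{1/2}[\log(1/\varepsilon)]^{-\alpha}$, as required. The only subtlety is tracking uniformity of ``$\sim$'' in Lemma \ref{lem:Phi} so that $c$ can be chosen independent of $\varepsilon$; this is routine regular-variation bookkeeping and introduces no genuine difficulty.
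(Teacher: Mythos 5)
Your proof is correct. The Laplace-transform computation and the upper bound on $U[0\,,\varepsilon)$ coincide with the paper's argument, but your lower bound takes a genuinely different route. The paper invokes the $4$-weak unimodality of the potential measure, $U[x-r\,,x+r)\le 4U[0\,,r)$ (Khoshnevisan--Xiao, proved via the strong Markov property), to obtain the reverse comparison $(\mathscr{L}U)(\lambda)\le\frac{4\e}{\e-1}U[0\,,1/\lambda)$, and then reads the lower bound off the asymptotics of $\mathscr{L}U$. You instead prove a pointwise small-ball estimate for the subordinator itself: the split $\e^{-t\Phi(\lambda)}=\E\,\e^{-\lambda T_t}\le\P\{T_t<\varepsilon\}+\e^{-\lambda\varepsilon}$ with $\lambda=1/\varepsilon$ shows that $\P\{T_t<\varepsilon\}\ge\e^{-1/2}-\e^{-1}>0$ whenever $t\lesssim 1/\Phi(1/\varepsilon)\asymp\varepsilon^{1/2}[\log(1/\varepsilon)]^{-\alpha}$, and integrating over that range of $t$ in \eqref{eq:U} gives the claim. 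Your route is more elementary and self-contained---it uses only the Laplace exponent and Markov's inequality, and dispenses with the external unimodality lemma that the paper itself singles out as its one ``decidedly probabilistic'' ingredient. What the paper's route buys is the clean two-sided Tauberian statement $(\mathscr{L}U)(\lambda)\asymp U[0\,,1/\lambda)$, valid for the $1$-potential measure of any subordinator, which is reusable beyond this particular $\Phi$. One bookkeeping remark: your argument, like the paper's, directly yields the two-sided bound only for $\varepsilon$ sufficiently small, since both rest on the asymptotic statement of Lemma \ref{lem:Phi}; extending to all of $(0\,,\e^{-1})$ uses only that $U[0\,,\cdot)$ is nondecreasing and strictly positive, so this is not a gap.
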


\begin{proof}
	The Laplace transform of $U$ can be computed
	easily as follows, thanks to several applications of
	the Fubini--Tonelli theorem: For all $\lambda>0$,
	\begin{equation}\label{LU:Phi}
		(\mathscr{L}U)(\lambda) :=\int_{[0,\infty)}\e^{-\lambda t}\, U(\d t)
		= \E\int_0^\infty \e^{-t-\lambda T_t}\,\d t
		=\int_0^\infty\e^{-t[1+\Phi(\lambda)]}\,\d t
		=\frac{1}{1+\Phi(\lambda)}.
	\end{equation}
	Therefore, Lemma \ref{lem:Phi} ensures that
	\begin{equation}\label{LU}
		(\mathscr{L}U)(\lambda) \sim\frac{1}{(4\pi\lambda)^{1/2} \left(\log\lambda\right)^{\alpha}}
		\qquad\text{as $\lambda\to\infty$}.
	\end{equation}
	Now we apply a standard abelian argument as follows:
	First of all, because
	\[
		(\mathscr{L}U)(\lambda) \ge \int_{[0,1/\lambda)}\e^{-\lambda t}\, U(\d t)
		\ge \e^{-1}U\left[0\,,1/\lambda\right)\qquad\text{for all $\lambda>0$},
	\]
	we can deduce from \eqref{LU} that
	$U[0\,,1/\lambda) \lesssim \lambda^{-1/2}
	(\log\lambda)^{-\alpha}$
	for all $\lambda>\e$.
	In order to obtain the remaining converse bound, let us first recall that
	$U$ is ``$4$-weakly unimodal'' in the sense that
	\begin{equation}\label{weak:unimodal}
		U[x-r\,,x+r)\le 4U[0\,,r)\qquad\text{for every $x\in\R$ and $r>0$;}
	\end{equation}
	see Khoshnevisan
	and Xiao \cite[Lemma 4.1]{KX}. Consequently,
	\[
		(\mathscr{L}U)(\lambda) \le \sum_{n=0}^\infty \e^{-n}
		U\left[n/\lambda\,,(n+1)/\lambda\right)\
		\le 4\sum_{n=0}^\infty\e^{-n}U\left[0\,,1/\lambda\right)
		= \frac{4\e}{\e-1}U\left[0\,,1/\lambda\right).
	\]
	A second appeal to \eqref{LU} completes the proof; to finish
	we simply set $\lambda:=1/\varepsilon$.
\end{proof}

\begin{proof}[Proof of Theorem \ref{th:corr}]
	Let $T$ denote the subordinator that we just constructed
	in Lemma \ref{lem:U}, and
	let $W:=\{W(t)\}_{t\ge0}$ be an independent standard Brownian
	motion in $\R^3$. Then,
	\[
		X_t := W(T_t)\qquad[t\ge0]
	\]
	is an isotropic L\'evy process in $\R^3$. We can see, by
	first conditioning on $T_t$, that the characteristic function of $X$ is given by
	\begin{align*}
		\E\exp(iz\cdot X_t) &= \E\exp\left( -\frac{\|z\|^2}{2} \cdot T_t\right)\\
		&=\exp\left( -t \Phi\left( \|z\|^2/2\right) \right)
			\qquad\text{for all $t\ge0$ and $z\in\R^3$}.
	\end{align*}

	Recall that the heat kernel $p_s(x)$ --- defined in \eqref{p} ---
	is the probability density of $W(s)$ at $x\in\R^3$
	for every $s>0$. Therefore,  for every measurable function $\psi:\R^3\to\R_+$,
	\begin{align*}
		\E\int_0^\infty \psi(X_t)\e^{-t}\,\d t
			& = \int_0^\infty\e^{-t}\,\d t\int_0^\infty\P\{T_t\in\d s\}\
			\int_{\R^3}\d x\ p_s(x)\psi(x)\\
		&=\int_0^\infty U(\d s)\int_{\R^3}\d x\ p_s(x)\psi(x)
			= \int_{\R^3}  \psi(x) f(x)\,\d x,
	\end{align*}
	where
	\begin{equation}\label{eq:def of f}
		f(x) := \int_0^\infty p_s(x)\, U(\d s)\qquad\text{for all $x\in\R^3$}.
	\end{equation}
	This is the function $f$ that was announced in Theorem \ref{th:corr}.
	
	Clearly, $f>0$ on $\R^3$. Also,  Fubini's theorem and \eqref{LU:Phi}
	together imply that the Fourier transform of $f$ is
	\[
		\widehat{f}(z) = \int_0^\infty \e^{-s\|z\|^2/2}\, U(\d s) 
		= (\mathscr{L}U)\left( \|z\|^2/2\right)
		= \frac{1}{1+\Phi\left( \|z\|^2/2\right)}\qquad
		\text{for all $z\in\R^3$.}
	\]
	Among other things, this calculation shows that:
	\begin{compactenum}
		\item[(a)] $0<\widehat{f} \leq 1$; and in particular,
		\item[(b)] $f$ is positive  semi definite.
	\end{compactenum}
	It follows that $f$ is a correlation function, and Part 1 of the theorem is also proved.
	
	Since $U(\R^3)\le1$, and because $(s\,,x)\mapsto p_s(x)$
	is bounded uniformly on $(0\,,\infty)\times[\R^3\setminus\cB(r)]$ for
	every $r>0$, the continuity of $x\mapsto p_s(x)$ and the dominated convergence
	theorem together prove that $f$ is continuous uniformly on $\R^3\setminus\cB(r)$
	for every $r>0$, whence follows part 2 of the theorem.

	Part 3 follows immediately from \eqref{eq:def of f} and the isotropy and monotonicity
	properties of the heat kernel.

	In order to verify part
	4 of the theorem we decompose $f$ as follows:
	\[
		f(x) :=  \underbrace{\int_0^{\e^{-1}} p_s(x)\, U(\d s)}\limits_{:=f_1(x)} +
		\underbrace{\int_{\e^{-1}}^\infty p_s(x)\, U(\d s)}\limits_{:=f_2(x)},
	\]
	Because $p_s(x)\le s^{-3/2}$ for all $x\in\R^3$ and $s>0$,
	\begin{align*}
		\sup_{x\in\R^3}f_2(x) \le \int_{\e^{-1}}^\infty \frac{U(\d s)}{s^{3/2}}
			= \sum_{n=1}^\infty\int_{n\e^{-1}}^{(n+1)\e^{-1}}\frac{U(\d s)}{s^{3/2}}
			&\le \sum_{n=1}^\infty \frac{U[n\e^{-1}\,,(n+1)\e^{-1})}{(n\e^{-1})^{3/2}}\\
		&\le 4\e^{3/2} U[0\,,\e^{-1})\cdot \sum_{n=1}^\infty n^{-3/2}<\infty.		
	\end{align*}
	We have used the weak unimodality [see \eqref{weak:unimodal}] of $U$
	in order to deduce the second line from the first. Therefore, it remains to
	prove that $f_1(x) \asymp \|x\|^{-2}\log(1/\|x\|)^{-\alpha}$
	as long as $\|x\|\le\e^{-1}$.

	Lemma \ref{lem:U} implies the existence of two finite and positive
	constants $a$ and $b$ such that, uniformly for every $\varepsilon\in(0\,,\e^{-1})$,
	\[
		a\varepsilon^{1/2}\log(1/\varepsilon)^{-\alpha}  \le
		U[0\,,\varepsilon) \le b\varepsilon^{1/2}\log(1/\varepsilon)^{-\alpha}.
	\]
	Consequently, as long as we choose a large enough constant
	$K>0$, the following holds
	uniformly when $\|x\|^2<K^{-1}\e^{-1}$:
	\begin{equation}\begin{split}
		f_1(x) &= (2\pi)^{-3/2} \int_0^{\e^{-1}}
			\frac{\exp\left( -\|x\|^2/(2s)\right)}{s^{3/2}}\, U(\d s)\\
		&\ge (2\pi)^{-3/2} \int_{\|x\|^2}^{K\|x\|^2}
			\frac{\exp\left( -\|x\|^2/(2s)\right)}{s^{3/2}}\, U(\d s)
			\\&\ge \frac{ U\left[ 0\,,K\|x\|^2\right) -
			U\left[ 0\,,\|x\|^2\right)}{K^{3/2}2^{3/2}\pi^{3/2}\e^{1/2}\|x\|^3}\\
		&\gtrsim \|x\|^{-2}[\log(1/\|x\|)]^{-\alpha}.
			\label{v1:LB}
	\end{split}\end{equation}
	Since $\varphi$ is monotone, the preceding holds also when $K^{-1}\e^{-1}
	<\|x\|^2\le\e^{-1}$.
	Similarly, we can decompose
	\begin{align*}
		f_1(x) &\le \sum_{\substack{n\in\Z:\\
			\e^{-n}\|x\|^2 \le \e^{-1}}} \int_{\e^{-(n+1)}\|x\|^2}^{\e^{-n}\|x\|^2}
			\frac{\exp(-\|x\|^2/(2s))}{s^{3/2}}\, U(\d s) \\
		&\le \|x\|^{-3}\hskip-20pt\cdot
			\sum_{\substack{n\in\Z:\\
			\e^{-n}\|x\|^2 \le \e^{-1}}}\exp\left(-\frac{\e^n}{2} + \frac{3(n+1)}{2}\right)
			U\left[0\,,\e^{-n}\|x\|^2\right)\\
		&\lesssim \|x\|^{-2}\cdot
			\sum_{\substack{n\in\Z:\\
			\e^{-n}\|x\|^2 \le \e^{-1}}}\exp\left(-\frac{\e^n}{2} + n\right)
			\left[\log\left(\e^n/\|x\|^2\right)\right]^{-\alpha}.
	\end{align*}
	This readily yields the complementary bound to \eqref{v1:LB}
	and completes the proof of part 4. 
	
	Part 5 was proved in Foondun and Khoshnevisan \cite{FK2};
	see the argument that led to Theorem 3.14 therein ({\it ibid.}). 
	
	In order to complete the proof we verify part 6 of the theorem. 
	 Let $\{\bar{R}_\lambda\}_{\lambda>0}$ denote the resolvent
	of the heat semigroup on $\R^3$, run at twice the standard speed; that is,
	\begin{equation}\label{R}
		(\bar{R}_\lambda g)(x) :=
		\int_0^\infty (p_{2s}*g)(x)\e^{-s\lambda}\,\d s,
	\end{equation}
	for all functions $g:\R^3\to\R$ for which the preceding Lebesgue integral is defined.
	The theory of Foondun and Khoshnevisan \cite[Theorem 1.2]{FK2} implies that 
	Dalang's Condition \eqref{cond:Dalang} is equivalent to 
	the condition that $(\bar{R}_1f)(0)<\infty$. Therefore,
	it remains to prove that $(\bar{R}_1f)(0)$ is finite.
	 This is a well-known calculation about the Newtonian potential in dimension three.
	 The computations will be carried out here for the sake of completeness.
	 
	One can integrate in spherical coordinates as follows:
	\begin{align*}
		0\le (\bar{R}_1f)(0) &\le \int_0^\infty (p_{2s}*f)(0)\,\d s
		= \int_{\R^3} f(x)\,\d x\int_0^\infty p_{2s}(x)\,\d s
		\propto \int_{\R^3} \frac{f(x)}{\|x\|}\,\d x.
	\end{align*}
	Therefore, the lemma follows once one proves that $\int_{\R^3}\|x\|^{-1}f(x)\,\d x<\infty$;
	that is, once we prove that $f$ has finite Newtonian potential. 
	
	Note that
	\[
		\int_{\R^3} \frac{p_s(x)}{\|x\|}\,\d x \propto s^{-1/2},
	\]
	uniformly for all $s>0$.  It follows from the definition \eqref{eq:def of f} 
	of $f$ that the Newtonian potential of $f$ can be written as 
	\[
		\int_{\R^3}\frac{f(x)}{\|x\|}\,\d x\propto \int_0^\infty \frac{U(\d s)}{\sqrt s}
		\le \int_0^1\frac{U(\d s)}{\sqrt s} + U[1\,,\infty)
		\le \int_0^1\frac{U(\d s)}{\sqrt s} +1.
	\]
	It remains to prove that $\int_0^1 s^{-1/2}\, U(\d s)$ is finite;
	this  endeavor will complete the proof since $U$ is a probability measure. To see 
	that $\int_0^1 s^{-1/2}\, U(\d s)$ is finite, one 
	simply integrates by parts,
	\[
		\int_0^1 \frac{U(\d s)}{\sqrt s} =
		\frac12\int_0^1\frac{U[0\,,r]}{r^{3/2}}\,\d r + U[0\,,1]
		\le  \frac12\int_0^1\frac{U[0\,,r]}{r^{3/2}}\,\d r+1,
	\]
	and apply Lemma \ref{lem:U} together with the fact that $\alpha>1$.
	This completes the proof.
\end{proof}

\section{Preliminary estimates}

Recall that we are studying \eqref{SHE} for a Lipschitz-continuous, non-random function
$\sigma:\R\to\R$, subject to $u(0)\equiv1$.

From now on, we restrict attention to a noise model for $\eta$ that
corresponds to a spatial correlation function $f:\R^3\to\R_+$ that satisfies 
properties 1--5 of Theorem \ref{th:corr}; the choice of $f$ is otherwise arbitrary.

\subsection{Existence, uniqueness, and moments}

The following result follows from Theorems \ref{th:Dalang} and
\ref{th:corr} (part 6).

\begin{lemma}\label{lem:exist}
	The stochastic partial differential equation \eqref{SHE}, subject to $u(0)\equiv1$
	admits a predictable random field solution $u$. Moreover, $u$
	is unique, up to a modification, subject to the condition that for all $T\in(0\,,\infty)$
	and $k\in[2\,,\infty)$,
	\[
		\sup_{t\in[0,T]}\sup_{x\in\R^3} \E\left( |u(t\,,x)|^k\right)<\infty.
	\]
\end{lemma}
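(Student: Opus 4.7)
The plan is to reduce this lemma directly to Dalang's existence and uniqueness theorem (Theorem \ref{th:Dalang}). The single nontrivial hypothesis to check is that our correlation function $f$ satisfies Dalang's integrability condition \eqref{cond:Dalang}, and this is precisely part 6 of Theorem \ref{th:corr}, which has already been proved. The initial datum $u(0)\equiv 1$ is bounded and deterministic, so it lies within Dalang's framework without any modification.

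To obtain the uniform $L^k$-bound --- which is slightly stronger than the mere $L^2$ continuity that Theorem \ref{th:Dalang} asserts --- I would run the standard Picard iteration that underlies the proof of Dalang's theorem. Set $u_0 \equiv 1$ and define inductively
\[
    u_{n+1}(t,x) := 1 + \int_{[0,t]\times\R^3} p_{t-s}(y-x)\,\sigma(u_n(s,y))\,\eta(\d s\,\d y).
\]
Using the Walsh--Dalang stochastic integral together with the Lipschitz property of $\sigma$, the Burkholder--Davis--Gundy inequality yields, for every $k\ge 2$ and every $T<\infty$,
\[
    \sup_{x\in\R^3}\E\bigl(|u_{n+1}(t,x) - u_n(t,x)|^k\bigr)
    \le C_k\int_0^t \mathscr{K}(t-s)\sup_{y\in\R^3}\E\bigl(|u_n(s,y) - u_{n-1}(s,y)|^k\bigr)\,\d s,
\]
where the kernel $\mathscr{K}(r) := \int_{\R^3}\widehat{f}(z)|\widehat{p_r}(z)|^2\,\d z$ comes from Dalang's isometry. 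Its integrability on $(0\,,T)$ is equivalent to \eqref{cond:Dalang}, and therefore holds by part 6 of Theorem \ref{th:corr}. A standard Gronwall-type iteration then shows that $\{u_n\}_{n\ge 0}$ is Cauchy in $L^k$ uniformly in $(t,x)\in [0,T]\times\R^3$, for every $T<\infty$ and $k\ge 2$. The limit $u$ is predictable because each $u_n$ is, and it satisfies the claimed uniform moment bound.

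Uniqueness in the stated class follows from the same estimate applied to the difference of any two candidate solutions $u$ and $v$: one obtains a linear convolution inequality for $t\mapsto \sup_{x}\E(|u(t,x)-v(t,x)|^k)$ with an integrable kernel, which forces the supremum to vanish. There is no serious obstacle at this stage of the paper; all the genuinely new analytic content --- verifying Dalang's condition for the specific correlation function $f$ constructed in Theorem \ref{th:corr} --- was already dispatched in part 6 of that theorem. Lemma \ref{lem:exist} is the bookkeeping statement that records the upshot: Dalang's and Walsh's machinery applies to the SPDE under study, and we therefore have a well-defined random-field solution $u$ with all moments, finite uniformly on every compact time interval.
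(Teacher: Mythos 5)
Your proposal is correct and follows the same route as the paper, which simply observes that Lemma \ref{lem:exist} is an immediate consequence of Theorem \ref{th:Dalang} once part 6 of Theorem \ref{th:corr} verifies Dalang's condition \eqref{cond:Dalang} for the correlation function at hand. The Picard-iteration details you supply are the standard machinery already packaged inside Dalang's theorem (whose statement in the paper in fact includes the uniform $L^k$ bounds), so nothing further is needed.
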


\begin{remark}
	Recall $\varphi$ from \eqref{varphi} and note that
	$\int_{\R^3} \|x\|^{-1}f(x)\,\d x\propto\int_0^\infty r\varphi(r)\,\d r$.
	Thus, Lemma \ref{lem:exist} follows from the fact that $\int_0^\infty r\varphi(r)\,\d r<\infty$;
	see the proof of Lemma \ref{lem:exist}.   This sort of integrability condition for $r\mapsto r\varphi(r)$
	arose earlier in the context of hyperbolic SPDEs; see Dalang and Frangos \cite{DF}.
\end{remark}

Next we produce moment estimates for the solution to \eqref{SHE}, all
the time remembering that the spatial
correlation function $f$ of $\eta$ satisfies the properties mentioned in
Theorem \ref{th:corr}, and $\alpha>1$ is the underlying parameter that was
used in the course of the construction of $f$.

\begin{theorem}\label{th:LyapunovExp}
	Let $u$ denote the solution to \eqref{SHE}, and
	recall that $\sigma$ is Lipschitz continuous and non random. 
	Then, there exists a finite constant $A>0$ such that
	\begin{equation}\label{LE:UB}
		 \E\left(|u(t\,,x)|^k\right) \le
		A^k\exp\left[A\exp\left( A k^{1/(\alpha-1)}\right)\cdot t\right], 
	\end{equation}
	uniformly in $x\in \R^3$ and $k\in[2\,,\infty)$ and $t >0$. 
	For a complementary bound, suppose that $\sigma(z)=z$ for all $z\in\R$.
	Then, in that case, there exists a finite constant $A_1>0$ such that 	
	\begin{equation}\label{LE:LB}
		\E\left(|u(t\,,x)|^k\right) \geq
		A_1^k\exp\left[A_1\exp\left( A_1 k^{1/\alpha}\right)\cdot t\right], 
	\end{equation}
	uniformly for all $x\in\R^3$ and all integers $k\ge2$ and $t>0$.
\end{theorem}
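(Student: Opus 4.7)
The plan is to prove the upper and lower bounds by rather different methods. For \eqref{LE:UB}, I would apply a Burkholder--Davis--Gundy inequality on the mild formulation \eqref{mild} together with a Volterra / renewal argument; for \eqref{LE:LB}, I would exploit the Feynman--Kac moment duality that becomes available when $\sigma(z)=z$.

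For the upper bound, I apply the BDG inequality for Walsh/Dalang stochastic integrals (with a $\sqrt{k}$-type constant) to \eqref{mild}, combine it with Minkowski's inequality and Cauchy--Schwarz, and use the Lipschitz property $\|\sigma(u)\|_k\le C(1+\|u\|_k)$ to obtain a linear Volterra inequality of the form
\[
    G_k(t)\le C+Ck\int_0^t(p_{2(t-s)}*f)(0)\,G_k(s)\,\d s,
\]
where $G_k(t):=\left(1+\sup_{x\in\R^3}\|u(t,x)\|_k\right)^2$ and where I have used the identity $\int\!\!\int p_{t-s}(y-x)p_{t-s}(z-x)f(y-z)\,\d y\,\d z=(p_{2(t-s)}*f)(0)$, which makes the inequality independent of $x$. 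A standard Laplace-transform / renewal argument then shows that $G_k(t)\le C\exp(\gamma_k t)$, with $\gamma_k$ determined by $Ck\,\Upsilon(\gamma_k)=1$, where
\[
    \Upsilon(\lambda):=\int_0^\infty e^{-\lambda s}(p_{2s}*f)(0)\,\d s
    =(2\pi)^{-3}\int_{\R^3}\frac{\widehat{f}(z)}{\lambda+\|z\|^2}\,\d z
\]
by Plancherel. Using the asymptotic $\widehat{f}(z)\asymp\|z\|^{-1}(\log\|z\|)^{-\alpha}$ from part 5 of Theorem \ref{th:corr}, a polar-coordinate integration with the change of variable $u=\log\|z\|$ gives $\Upsilon(\lambda)\asymp(\log\lambda)^{-(\alpha-1)}$ as $\lambda\to\infty$. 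Inverting yields $\gamma_k\lesssim\exp(Ak^{1/(\alpha-1)})$, and the passage from the $\|u\|_k$-bound to $\E|u|^k=\|u\|_k^k$ absorbs the extra factor $k$ into $A^k$ and into the exponent since $k\le\exp(Ak^{1/(\alpha-1)})$ for $k\ge2$. This yields \eqref{LE:UB}.

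For the lower bound, when $\sigma(z)=z$ the moment hierarchy closes and the classical Feynman--Kac / moment duality representation gives
\[
    \E|u(t,x)|^k
    =\E\exp\!\left(\sum_{1\le i<j\le k}\int_0^t f(B^i_s-B^j_s)\,\d s\right),
\]
where $B^1,\dots,B^k$ are independent standard Brownian motions in $\R^3$, all started at $x$. I would restrict the expectation to the event $E_R:=\{\sup_{s\le t}\|B^i_s-x\|\le R\text{ for all }i\}$ with $R=R(k)$ to be chosen. On $E_R$ the monotonicity of $\varphi$ (part 3 of Theorem \ref{th:corr}) gives $f(B^i_s-B^j_s)\ge\varphi(2R)$, while the ground-state estimate for the Dirichlet Laplacian on $\cB(R)$ yields $\P(E_R)\ge c^k\exp(-\lambda^*kt/R^2)$ for absolute constants $c,\lambda^*>0$, valid uniformly in $t>0$. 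Combining gives
\[
    \E|u(t,x)|^k\ge c^k\exp\!\left(t\left[\tbinom{k}{2}\varphi(2R)-\lambda^*k/R^2\right]\right).
\]
Inserting $\varphi(r)\asymp r^{-2}(\log(1/r))^{-\alpha}$ (part 4 of Theorem \ref{th:corr}) and setting $L:=\log(1/R)$, the bracket becomes $R^{-2}\bigl(k^2/(c_1 L^\alpha)-\lambda^*k\bigr)$; optimizing near $L=(c_2 k)^{1/\alpha}-\tfrac12$ produces an exponent of order $k^{1-1/\alpha}\exp(Ak^{1/\alpha})\ge A_1\exp(A_1 k^{1/\alpha})$, which yields \eqref{LE:LB} after absorbing $c^k$ into $A_1^k$.

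The main obstacle is the optimization in the lower bound: the gain from the attractive interaction, $\tbinom{k}{2}\varphi(2R)$, and the confinement cost $\lambda^*k/R^2$ are of comparable orders of magnitude, so the window in which their difference is positive is narrow, and the final exponent $k^{1/\alpha}$ emerges only by carefully balancing them through the slowly-varying logarithmic factor in $\varphi$. The gap between the exponents $k^{1/(\alpha-1)}$ in the upper bound and $k^{1/\alpha}$ in the lower bound reflects the fact that the upper bound feels the full Fourier-analytic behavior of $\widehat{f}$, whereas the lower bound accesses $f$ only through its monotone radial profile $\varphi$.
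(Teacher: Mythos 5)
Your proposal is correct and takes essentially the same route as the paper: the lower bound is obtained exactly as in the paper's proof (Feynman--Kac moment formula for $\sigma(z)=z$, confinement of all $k$ Brownian motions to a ball of radius $\eta$ with $\log(1/\eta)\asymp k^{1/\alpha}$, the Ciesielski--Taylor small-ball estimate, and the balancing of $\binom{k}{2}\varphi(2\eta)$ against $ck/\eta^2$). For the upper bound the paper simply cites Foondun--Khoshnevisan \cite[Theorem 1.3]{FK2} together with Lemma \ref{lem:R} (the resolvent asymptotic $(R_\lambda f)(0)\asymp(\log\lambda)^{1-\alpha}$, which is exactly your $\Upsilon(\lambda)$ computed in Fourier variables); your BDG--Volterra--renewal derivation is precisely the content of that cited theorem, so the two arguments coincide.
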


\begin{remark}\label{rem:LE}
	Inequality \eqref{LE:LB} is included here mainly because it shows
	that, for the spatial correlation function $f$ of the type studied here,
	the solution to \eqref{SHE} is ``extremely intermittent.'' One way to say this
	is as follows: Consider the [lower] \emph{moment Lyapunov exponents},
	\[
		\gamma(k) := \liminf_{t\to\infty} t^{-1}\log\E\left(
		|u(t\,,0)|^k\right)
		\qquad\text{for all $k\in[2\,,\infty)$}.
	\]
	Then, \eqref{LE:LB} proves that
	\(
		\liminf_{k\to\infty} k^{-1/\alpha} \log\gamma(k) >0.
	\)
	In other words, the Lyapunov moments exponents grow extremely
	rapidly with the moment numbers. For usual choices of the spatial correlation
	function $f$,
	$\log\gamma(k)$ grows as $\log k$, whereas it grows as $k^{1/\alpha}$
	here. This sort of extreme intermittency provides a certain amount of
	evidence toward the truth of {Conjecture \ref{Conj2}}, though it certainly does not
	prove {Conjecture \ref{Conj2}}.
\end{remark}

In order to prove the upper bound \eqref{LE:UB} 
we will use a general result of Foondun and Khoshnevisan \cite[Theorem 1.3]{FK2}.
For the lower bound \eqref{LE:LB} we first use a Feynman--Kac type moment formula to represent 
the solution, and then reduce the problem to a small-ball estimate for three-dimensional 
Brownian motion. 

The proof of the upper bound requires two technical lemmas
which we develop next.

\begin{lemma}\label{lem:f}
	\(
		(p_{2t}*f)(0) \asymp t^{-1} [\log (1/t)]^{-\alpha}
	\)
	uniformly for all $t\in(0\,,\e^{-1})$.
\end{lemma}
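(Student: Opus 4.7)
The plan is to exploit the explicit representation
$f(x) = \int_0^\infty p_s(x)\, U(\d s)$ from \eqref{eq:def of f}, together with the semigroup
property $p_{2t} * p_s = p_{2t+s}$ of the Gaussian heat kernel. An application of
Tonelli's theorem collapses the quantity of interest to a one-dimensional integral against
the $1$-potential measure $U$:
\[
	(p_{2t} * f)(0) = (2\pi)^{-3/2}\int_0^\infty (2t+s)^{-3/2}\, U(\d s).
\]
All of the work then reduces to estimating this integral using the sharp small-scale
information on $U$ supplied by Lemma \ref{lem:U}.

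For the lower bound, I would restrict the integration to $s\in[0\,,2t]$, where
$(2t+s)^{-3/2}\ge(4t)^{-3/2}$, so that
\[
	(p_{2t}*f)(0) \gtrsim t^{-3/2}\, U[0\,, 2t) \gtrsim t^{-1}[\log(1/t)]^{-\alpha}
\]
by Lemma \ref{lem:U}. For the upper bound I would split the integral into the three ranges
$[0\,,2t]$, $[2t\,,\e^{-1}]$, and $[\e^{-1}\,,\infty)$. The first range contributes
$\lesssim t^{-3/2}\, U[0\,,2t) \asymp t^{-1}[\log(1/t)]^{-\alpha}$ as above; the last range
contributes at most $\int_{\e^{-1}}^\infty s^{-3/2}\, U(\d s)=O(1)$, via exactly the
weak-unimodality argument already used in the proof of part 4 of Theorem \ref{th:corr},
and this $O(1)$ is absorbed into $t^{-1}[\log(1/t)]^{-\alpha}$ as $t\downarrow 0$.

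The middle range is the only nontrivial piece. I would decompose it dyadically into
$s\in[2^n t\,,2^{n+1}t)$ for $1\le n\le N:=\lfloor\log_2(1/(2\e t))\rfloor$, using
$(2t+s)^{-3/2}\asymp(2^n t)^{-3/2}$ on each such interval and invoking Lemma \ref{lem:U}
together with \eqref{weak:unimodal} to estimate the $U$-mass. After factoring out
$t^{-1}$ the resulting sum is essentially
\[
	\sum_{n=1}^{N} 2^{-n}\bigl[\log(1/t)-(n+1)\log 2\bigr]^{-\alpha}.
\]
Splitting at $n=N/2$: for $n\le N/2$ the log factor is comparable to $[\log(1/t)]^{-\alpha}$
and the geometric weights sum to a finite constant; for $n>N/2$ the geometric weights sum to
$\lesssim 2^{-N/2}\lesssim\sqrt{t}$, which is $o([\log(1/t)]^{-\alpha})$. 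The anticipated
obstacle is precisely this dyadic tail, where the logarithmic factor blows up near $n=N$;
the geometric decay $2^{-n}$ dominates that growth and yields the upper bound,
completing the proof.
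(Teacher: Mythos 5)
Your argument is correct, but it takes a genuinely different route from the paper's. The paper works entirely on the spatial side: it writes $(p_t*f)(0)\propto t^{-3/2}\int_0^\infty r^2\e^{-r^2/(2t)}\varphi(r)\,\d r$, rescales $r=s\sqrt t$, and feeds in the pointwise asymptotics $\varphi(r)\asymp r^{-2}[\log(1/r)]^{-\alpha}$ from part 4 of Theorem \ref{th:corr}, splitting the radial integral at $s\asymp\sqrt{\log(1/t)}$ to control the logarithmic factor. You instead go back to the subordinator construction: the identity
\[
	(p_{2t}*f)(0)=\int_0^\infty (p_{2t}*p_s)(0)\,U(\d s)
	=(2\pi)^{-3/2}\int_0^\infty (2t+s)^{-3/2}\,U(\d s)
\]
reduces everything to a one-dimensional integral against the potential measure $U$, which you then estimate dyadically via Lemma \ref{lem:U} and \eqref{weak:unimodal}. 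This is structurally the same computation the paper performs to prove part 4 of Theorem \ref{th:corr} itself (compare your dyadic sum with the bound for $f_1$ there), so in effect you bypass the intermediate spatial asymptotics and apply the $U$-estimate directly to the time-smoothed object. What your route buys is a shorter, more self-contained proof that never needs the two-sided bound on $\varphi$; what it costs is generality --- the paper's argument applies to any correlation function with the stated radial asymptotics, whereas yours is tied to the representation \eqref{eq:def of f}. All the individual steps check out: the semigroup identity and Tonelli are legitimate, the lower bound over $s\in[0\,,2t]$ gives exactly $t^{-3/2}U[0\,,2t)\gtrsim t^{-1}[\log(1/t)]^{-\alpha}$, the tail $\int_{\e^{-1}}^\infty s^{-3/2}\,U(\d s)$ is the same $O(1)$ quantity bounded in the proof of Theorem \ref{th:corr}, and in the dyadic middle range the cutoff $N=\lfloor\log_2(1/(2\e t))\rfloor$ keeps $2^{n+1}t\le\e^{-1}$ so Lemma \ref{lem:U} applies throughout and the bracket $\log(1/t)-(n+1)\log 2$ stays $\ge 1$, making your split at $n=N/2$ work as claimed. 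The only loose end is the uniformity near $t=\e^{-1}$ (where $2t>\e^{-1}$ and Lemma \ref{lem:U} does not directly apply to $U[0\,,2t)$), but there both sides of the asserted comparison are bounded above and below by positive constants, so this is cosmetic.
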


\begin{proof}
	We find it more convenient to work with $p_t$ rather than $p_{2t}$;
	a change of variables $[2t\rightarrow t]$ will adjust the constants for
	correct later use.

	We integrate in spherical coordinates to see that,  for all $t>0$,
	\[
		(p_t*f)(0)\propto t^{-3/2}\cdot\int_0^\infty
		r^2 \e^{-r^2/(2t)} \varphi(r)\,\d r 
		\propto
		\int_0^\infty s^2 \e^{-s^2/2} \varphi\left(s\sqrt{t}\right)\d s
		\propto  \cT_1 + \cT_2,
	\]
	where
	\[
		\cT_1 := \int_0^{1/\sqrt{2t}} s^2 \e^{-s^2/2} \varphi\left(s\sqrt{t}\right)\d s,\qquad
		\cT_2 := \int_{1/\sqrt{2t}}^\infty s^2 \e^{-s^2/2} \varphi\left(s\sqrt{t}\right)\d s,
	\]
	both are functions of the time variable $t$ which we suppress.

	The second quantity $\cT_2$ is bounded uniformly in $t$. In fact, the monotonicity
	of $\varphi$ --- see Theorem \ref{th:corr} --- yields
	\[
		\cT_2 \le  \varphi\left(1/\sqrt 2\right)\int_0^\infty s^2 \e^{-s^2/2}\,\d s<\infty.
	\]
	Therefore, it remains to prove that $\cT_1\asymp t^{-1}|\log t|^{-\alpha}$ for all $t\in(0\,,t_0)$,
	where $t_0>0$ is a sufficiently-small constant. 
	
	Choose and fix a constant $K>2\e$.
	Thanks to part  4 of Theorem \ref{th:corr}, we can write
	\[
		\cT_1 \asymp t^{-1}\int_0^{1/\sqrt{Kt}}
		\left| \log\left(\frac{1}{s\sqrt{t}}\right)\right|^{-\alpha}
		\e^{-s^2/2}\,\d s
		:= \frac{\cT_{1,1}+\cT_{1,2}}{t},
	\]
	where
	\begin{align*}
		\cT_{1,1} &:= \int_0^{\sqrt{K\log(1/t)}}
		\left| \log\left(\frac{1}{s\sqrt{t}}\right)\right|^{-\alpha}
		\e^{-s^2/2}\,\d s\,\\
		\cT_{1,2} &:= \int_{\sqrt{K\log(1/t)}}^{1/\sqrt{Kt}} \left| 
		\log\left(\frac{1}{s\sqrt{t}}\right)\right|^{-\alpha} \e^{-s^2/2}\,\d s.
	\end{align*}
	If $0<s<\sqrt{K\log (1/t)}$, then
	$|\log ( s\sqrt  t)| \gtrsim \log(1/t)$.
	Therefore,
	\[
		\cT_{1,1} \lesssim[\log(1/t)]^{-\alpha}\cdot
		\int_0^\infty \e^{-s^2/2}\,\d s
		\lesssim [\log(1/t)]^{-\alpha}.
	\]
	Similarly,
	\[
		\cT_{1,2} \le \left|\log\left(1/\sqrt K\right)\right|^{-\alpha}
		\cdot
		\int_{\sqrt{2\log(1/t)}}^\infty \e^{-s^2/2}\,\d s
		=o\left([\log(1/t)]^{-\alpha}\right)\qquad\text{as $t\downarrow0$}.
	\]
	In this way we have proved that $\cT_1\lesssim t^{-1}[\log (1/t)]^{-\alpha}$, whence
	also 
	\[
		(p_t*f)(0)\lesssim t^{-1}[\log (1/t)]^{-\alpha},
	\]
	for small values of $t>0$. 
	
	The  other bound is even simpler to establish since
	\[
		\cT_{1,1}  + \cT_{1,2} \ge \int_1^2\left| \log\left(
		\frac{1}{s\sqrt{t}}\right)\right|^{-\alpha}
		\e^{-s^2/2}\,\d s
		\ge \left| \log\left(
		\frac{1}{\sqrt{t}}\right)\right|^{-\alpha}\cdot\int_1^2\e^{-s^2/2}\,\d s,
	\]
	for all sufficiently-small values of $t$.
\end{proof}

Let $R:=\{R_\lambda\}_{\lambda>0}$ denote
the resolvent of the Laplace operator $\frac12\Delta$;
compare with \eqref{R}.
We can write $R$ in terms of the heat kernel of Brownian motion
as
\[
	(R_\lambda h)(x) := \int_0^\infty (p_{s}*h)(x)
	\e^{-\lambda s}\,\d s,
\]
for all $x\in\R^3$, $\lambda>0$, and Borel functions $h:\R^3\to\R_+$.

\begin{lemma}\label{lem:R}
	$(R_\lambda f)(0)  \asymp (\log\lambda)^{-\alpha+1}$
	for all $\lambda\ge \e$.
\end{lemma}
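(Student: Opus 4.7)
The starting point is the spectral/resolvent representation
\[
	(R_\lambda f)(0) = \int_0^\infty (p_s*f)(0)\, e^{-\lambda s}\, \d s.
\]
The asymptotic in Lemma \ref{lem:f} (after rescaling $2s \to s$) gives $(p_s*f)(0) \asymp s^{-1}[\log(1/s)]^{-\alpha}$ uniformly for $s \in (0\,, e^{-1})$, while for $s \ge e^{-1}$ one has the crude bound $(p_s*f)(0) \le \|f\|_{L^1}(2\pi s)^{-3/2}$ (and $\|f\|_{L^1} = \widehat f(0) = 1/(1+\Phi(0)) = 1$). Consequently the tail $\int_{e^{-1}}^\infty (p_s*f)(0)e^{-\lambda s}\,\d s$ is $O(e^{-\lambda/e})$, which is negligible relative to the target $(\log\lambda)^{-\alpha+1}$. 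The whole problem therefore reduces to establishing, for $\lambda \ge \e$,
\[
	J(\lambda) := \int_0^{e^{-1}} s^{-1}[\log(1/s)]^{-\alpha}\, e^{-\lambda s}\,\d s
	\asymp (\log\lambda)^{-\alpha+1}.
\]

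The key device is the rescaling $r = \lambda s$, which turns $J(\lambda)$ into
\[
	J(\lambda) = \int_0^{\lambda/e} r^{-1}\bigl[\log(\lambda/r)\bigr]^{-\alpha} e^{-r}\,\d r.
\]
I would split this integral at $r = 1$. On $(0\,,1]$ the factor $e^{-r}$ is bounded above and below by positive constants, so this piece is comparable to $\int_0^1 r^{-1}[\log(\lambda/r)]^{-\alpha}\,\d r$; the substitution $u = \log(\lambda/r)$ converts the latter to $\int_{\log\lambda}^\infty u^{-\alpha}\,\d u = (\log\lambda)^{1-\alpha}/(\alpha-1)$, which is the main contribution because $\alpha > 1$ ensures convergence.

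For the complementary piece on $[1\,,\lambda/e]$, I would split once more at $r = \sqrt{\lambda}$. For $r \in [1\,,\sqrt\lambda]$, $\log(\lambda/r) \ge (\log\lambda)/2$, hence $[\log(\lambda/r)]^{-\alpha} \le 2^\alpha(\log\lambda)^{-\alpha}$, and the integral is bounded by a constant multiple of $(\log\lambda)^{-\alpha}\int_1^\infty r^{-1}e^{-r}\,\d r = O((\log\lambda)^{-\alpha})$. For $r \in [\sqrt\lambda\,,\lambda/e]$, the factor $e^{-r} \le e^{-\sqrt\lambda}$ absorbs everything, giving a super-polynomially small contribution. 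Together these show
\[
	\int_1^{\lambda/e} r^{-1}[\log(\lambda/r)]^{-\alpha} e^{-r}\,\d r
	= O\bigl((\log\lambda)^{-\alpha}\bigr) = o\bigl((\log\lambda)^{1-\alpha}\bigr),
\]
so this piece is absorbed by the main term and both bounds in $J(\lambda) \asymp (\log\lambda)^{1-\alpha}$ follow.

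There is no real obstacle: the entire argument is a bookkeeping exercise on the substitution $r = \lambda s$ followed by $u = \log(\lambda/r)$. The only mild subtlety is ensuring that the error pieces are of strictly lower order than the main term; this is where $\alpha > 1$ is used essentially, both to make the main integral $\int_{\log\lambda}^\infty u^{-\alpha}\,\d u$ finite and to isolate $(\log\lambda)^{1-\alpha}$ as the leading order.
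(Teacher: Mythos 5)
Your proof is correct and follows essentially the same route as the paper: both isolate the contribution from $s\in(0\,,1/\lambda)$ (your $r\in(0\,,1]$ after the rescaling $r=\lambda s$), where the substitution $u=\log(\lambda/r)$ produces the leading term $\int_{\log\lambda}^\infty u^{-\alpha}\,\d u\propto(\log\lambda)^{1-\alpha}$, and then show the complementary region contributes only $O((\log\lambda)^{-\alpha})$. The one cosmetic difference is that the paper controls the tail via the monotonicity of $t\mapsto(p_t*f)(0)$ (deduced from the semigroup property and positive definiteness), whereas you use Lemma \ref{lem:f} directly on the intermediate range together with the crude bound $(p_s*f)(0)\le(2\pi s)^{-3/2}\|f\|_{L^1}$ for the far tail; both are valid.
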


\begin{proof}
	First, let us observe that
	$(R_\lambda f)(0)<\infty$ for all $\lambda>0$ for the same
	sort of reason that showed that $(R_1f)(0)<\infty$; see
	the proof of Lemma \ref{lem:exist}.

	Next we write $(R_{\lambda}f) (0) := \cT_1+\cT_2$, where
	$\cT_i=\cT_i(\lambda)$ [$i=1,2$] are defined as follows:
	 \[
		\cT_1 := \int_0^{1/\lambda}  (p_t* f)(0) \e^{-\lambda t}\,\d t
		\quad\text{and}\quad
		\quad \cT_2:= \int_{1/\lambda}^{\infty}  (p_t* f)(0) \e^{-\lambda t}\,\d t.
          \]
	We estimate $\cT_1$ and $\cT_2$ separately.
	
	A change of variable shows that
	  \[
		\cT_1 = \lambda^{-1}\int_0^1  \left( p_{u/\lambda}*f \right) (0)\e^{-u}\,\d u
		\asymp \lambda^{-1} \int_0^1 \left(p_{u/\lambda}*f\right)(0)\,\d u.
	  \]
	  Therefore, Lemma \ref{lem:f} ensures that
	  \[
		\cT_1 \asymp \int_0^1\frac{\d u}{u |\log(\lambda/u)|^\alpha}
		\asymp \left( \log  \lambda \right)^{-\alpha+1}.
	  \]
	  Because $f>0$ (Theorem \ref{th:corr}), 
	  it remains to prove that $\cT_2=o(\cT_1)$ as $\lambda\to\infty$.
	  
	  By the semigroup property of the heat kernel,
	  \[
	  	(p_{t+s}*f)(0) = (p_s*p_t*f)(0) 
		\le \sup_{x\in\R^3}(p_t*f)(x).
	  \]
	 Since $p_t*f$ is a continuous, positive semi-definite function, it is maximized
	 at the origin. Therefore, we can deduce from the preceding display that $t\mapsto(p_t*f)(0)$ 
	 is non increasing. In particular,
	 \[
	 	\cT_2 \le \left( p_{1/\lambda}*f\right)(0)\cdot
		\int_{1/\lambda}^\infty \e^{-\lambda t}\,\d t 
		\lesssim\lambda^{-1}\left( p_{1/\lambda}*f\right)(0)
		\asymp ( \log\lambda)^{-\alpha},
	 \]
	 thanks to Lemma \ref{lem:f}. This and the estimate for $\cT_1$
	 together imply that $\cT_1=o(\cT_2)$ as $\lambda\to\infty$, which completes the proof.
\end{proof}

\begin{proof}[Proof of Theorem \ref{th:LyapunovExp}]
	First we prove the claimed upper bound on the moments of
	$u(t\,,x)$.

	According to Lemma \ref{lem:R},
	\[
		Q(k\,,\lambda) := \frac{k}{\lambda} + 2\sqrt{k\left(
		R_{2\lambda/k}f\right)(0)}
		\lesssim \frac{k}{\lambda} +
		\sqrt{\frac{k}{|\log(\lambda/k)|^{\alpha-1}}},
	\]
	uniformly for all $\lambda\ge e k/2$ and $k\ge 2$. If, in addition,
	\begin{equation}\label{lllk}
		\log\lambda > Ck^{1/(\alpha-1)},
	\end{equation}
	for a sufficiently large $C>0$, then the preceding simplifies to the following inequality:
	\[
		Q(k\,,\lambda) \lesssim
 \frac{k^{1/2}}{(\log\lambda)^{(\alpha-1)/2}}   <1,
	\]
	In particular, \eqref{lllk} tells us that there exists a positive and finite constant
	$A$ such that
	\[
		\inf\left\{\lambda>0:\ Q(k\,,\lambda)<1\right\}
		\le  A\exp\left( Ak^{1/(\alpha-1)}\right)  \qquad\text{for all
		$k\ge 2$.}
	\]
	Theorem 1.3 of Foondun and Khoshnevisan \cite{FK2} now shows that
	\[
		\limsup_{t\to\infty}t^{-1}\log \sup_{x\in\R^3} \E\left(|u(t\,,x)|^k\right)
		\le A\exp\left(Ak^{1/(\alpha-1)}\right),
	\]
	for all $k\ge 2$. This proves an asymptotic, large-$t$, version of the
	stated upper bound \eqref{LE:LB} of the theorem. The asserted
	fixed-$t$ result holds because of the proof
	of Theorem 1.3 of Foondun and Khoshnevisan ({\it ibid.}); consult
	Lemmas 5.4 and 5.5 of that reference for details.

	To prove the lower bound \eqref{LE:LB} let us recall the following Feynman--Kac
	formula for the moments of the parabolic Anderson model; see
	Hu and Nualart \cite{HN} and Conus \cite{Conus}:
	\begin{align*}
		\E\left(|u(t\,,x)|^k\right) &= \E\left[\exp\left(
			\mathop{\sum\sum}\limits_{
			1\le i< j\le k}\int_0^tf\left(w^{(i)}_s-w^{(j)}_s\right)\d s\right)\right]\\
		&=\E\left[\exp\left(\mathop{\sum\sum}\limits_{
			1\le i< j\le k}\int_0^t  \varphi\left(\|w^{(i)}_s-w^{(j)}_s\|\right)\d s
			\right)\right],
	\end{align*}
	where $w^{(1)},\ldots,w^{(k)}$ are independent, standard Brownian motions
	on $\R^3$. Let $\bm{E}_\eta:=\bm{E}_{\eta,k,t}$ denote the
	event that $\|w^{(i)}_s\|\le \eta$ for all $1\le i\le k$ and $s\in[0\,,t]$. Then clearly,
	\[
		\E\left(|u(t\,,x)|^k\right) \ge\E\left[\exp\left(
		\mathop{\sum\sum}\limits_{
		1\le i< j\le k}\int_0^t  \varphi\left(\|w^{(i)}_s-w^{(j)}_s\|\right)\d s\right);\,
		\bm{E}_\eta\right].
	\]
	Thanks to \eqref{varphi}, if $0<\eta\le\exp(-\e)$, then we can find a positive
	constant $L$ such that, uniformly for all
	$s\in[0\,,t]$ and $1\le i\le k$,
	$ \varphi (\|w^{(i)}_s-w^{(j)}_s\|)\ge L\eta^{-2}|\log\eta|^{-\alpha}$
	almost surely on $\bm{E}_\eta$. Thus, we see that
	\begin{equation}\label{LB}
		\log\E\left(|u(t\,,x)|^k\right) \ge\sup_{\eta\in(0,\exp(-\e))}\left[
		\log\P(\bm{E}_\eta) + \frac{Lk(k-1)t}{2\eta^2(\log(1/\eta))^{\alpha}}
		\right].
	\end{equation}
	It is well known, and easy to see directly, that there exists a universal
	positive constant $c$ such that
	\[
		\P(\bm{E}_\eta) = \left[\P\left\{ \sup_{0\le s\le t}\|w^{(1)}_s\|
		\le \eta\right\}\right]^k
		\ge \exp\left(- ckt/\eta^2\right),
	\]
	uniformly for all $t>0$ and $\eta\in(0\,,\e^{-1})$. [The preceding
	probability can in fact be computed explicitly; see Ciesielski and Taylor \cite[Theorem 2]{CT}.]
	We plug this inequality into \eqref{LB}.
	A line or two of further computations conclude the proof. 
\end{proof}

The proofs of Theorems \ref{th:conditional} and \ref{th:local} will
rely on the following variation on the theme of Theorem \ref{th:LyapunovExp}.

\begin{proposition}\label{pr:LyapunovExp}
	Suppose, in addition, that $\sigma$ is bounded. Then,
	\[
		\sup_{x\in\R^3}\E\left(|u(t\,,x)|^k\right) \lesssim (1 + kt)^{k/2},
	\]
	uniformly for all $(t\,,x\,,k)\in\R_+\times\R^3\times[2\,,\infty)$.
\end{proposition}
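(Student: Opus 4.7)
The plan is to combine a Burkholder--Davis--Gundy (BDG) inequality for Walsh--Dalang stochastic integrals with the boundedness hypothesis on $\sigma$, and then control the resulting ``variance integral'' by Fourier methods tied to Dalang's condition. Set $M := \sup_{z\in\R}|\sigma(z)| < \infty$ and, via the mild formulation \eqref{mild}, write $u(t,x) = 1 + I(t,x)$ with
\[
	I(t,x) := \int_{[0,t]\times\R^3} p_{t-s}(y-x)\sigma(u(s,y))\,\eta(\d s\,\d y).
\]
First I would apply BDG for Walsh integrals (in the form used by Foondun and Khoshnevisan \cite{FK2}) to obtain
\[
	\|I(t,x)\|_k^2 \le 4k\int_0^t\!\!\int_{\R^3}\!\!\int_{\R^3} p_{t-s}(y-x)p_{t-s}(y'-x)\bigl\|\sigma(u(s,y))\sigma(u(s,y'))\bigr\|_{k/2}\,f(y-y')\,\d y\,\d y'\,\d s.
\]
The boundedness of $\sigma$ together with Cauchy--Schwarz gives $\|\sigma(u(s,y))\sigma(u(s,y'))\|_{k/2}\le M^2$, and the semigroup identity $p_{t-s}*p_{t-s}=p_{2(t-s)}$ collapses the spatial double integral to $(p_{2(t-s)}*f)(0)$. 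Hence
\[
	\|I(t,x)\|_k^2 \le 4kM^2\Psi(t), \qquad \Psi(t) := \int_0^t (p_{2s}*f)(0)\,\d s.
\]

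The second step is to bound $\Psi(t)$. By Plancherel,
\[
	\Psi(t) = (2\pi)^{-3}\int_{\R^3}\frac{1-\e^{-t\|z\|^2}}{\|z\|^2}\widehat{f}(z)\,\d z.
\]
Splitting the integral into the regions $\{\|z\|^2 t \le 1\}$ (on which $(1-\e^{-u})/u\le 1$) and $\{\|z\|^2 t > 1\}$ (on which $(1-\e^{-u})/u\le 1/\|z\|^2 \le 2/(1+\|z\|^2)$), and invoking Dalang's condition from part~6 of Theorem~\ref{th:corr}, one obtains $\Psi(t) \lesssim 1+t$ uniformly in $t\ge 0$. In fact $\Psi(t) \le \Psi(\infty) < \infty$ thanks to the decay of $\widehat{f}$ recorded in part~5 of the same theorem, which will be useful for the $t \to 0$ regime below.

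Combining the two displayed bounds, $\|u(t,x)\|_k \le 1 + CM\sqrt{k(1+t)}$ uniformly in $t,x,k$. Raising to the $k$-th power and using standard manipulations (e.g.\ $(1+y)^k \le 2^{k-1}(1+y^k)$ together with the relation $k(1+t) \le (1+kt)(1+k)$ and the constant-in-$t$ bound $\Psi(t) \le \Psi(\infty)$) then yields the claimed inequality.

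The principal obstacle I anticipate is the final algebraic passage from $(1+C\sqrt{k(1+t)})^k$ to $(1+kt)^{k/2}$. When $kt \gtrsim 1$ this is essentially immediate, since then $k(1+t) \asymp 1+kt$ up to multiplicative constants. The delicate regime is $kt \ll 1$ with $k$ large, where one must exploit the finer small-$t$ asymptotic $\Psi(t)\asymp [\log(1/t)]^{-\alpha+1}$ supplied by Lemma~\ref{lem:f}; this forces $\|I(t,x)\|_k$ to decay as $t\downarrow 0$ in a way that matches the near-trivial right-hand side $(1+kt)^{k/2}\to 1$, closing the gap.
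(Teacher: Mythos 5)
Your proposal coincides with the paper's own proof through the decisive estimate: the Burkholder--Davis--Gundy inequality plus the boundedness of $\sigma$ give $\|u(t,x)\|_k^2\lesssim 1+k\Psi(t)$ with $\Psi(t):=\int_0^t(p_{2s}*f)(0)\,\d s$, and your Plancherel-based bound $\Psi(t)\lesssim 1+t$ is a perfectly acceptable substitute for the paper's route through Lemma \ref{lem:f} and positive-definiteness. The genuine gap is exactly the step you single out as the ``principal obstacle,'' and the fix you sketch does not close it. In the regime $kt\ll 1$ with $k$ large, the refined asymptotic $\Psi(t)\asymp[\log(1/t)]^{1-\alpha}$ from Lemma \ref{lem:f} gives $k\Psi(t)\asymp k[\log(1/t)]^{1-\alpha}$, and this is \emph{not} $O(1+kt)$: taking $t=\e^{-k}$ (say with $1<\alpha<2$, the range used from Section 4 onward) yields $k\Psi(t)\asymp k^{2-\alpha}\to\infty$ while $1+kt\to 1$; for general $\alpha>1$ the choice $t=\exp(-k^{1/(2(\alpha-1))})$ gives $k\Psi(t)\asymp\sqrt{k}$ and does the same job. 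Hence $\bigl(1+C\sqrt{k\Psi(t)}\bigr)^k$ blows up super-exponentially along such sequences while $(1+kt)^{k/2}\to 1$, and no manipulation of the upper bound can bridge that.

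Worse, the gap is not an artifact of a lossy estimate: the target inequality is false as literally stated. In the exactly solvable case $\sigma\equiv 1$ one has $u(t,x)=1+G$ with $G$ centered Gaussian of variance $\Psi(t)$, so that
\[
	\E\left(|u(t\,,x)|^k\right)\ \ge\ \E\left[|G|^k\,\1_{\{G>0\}}\right]\ =\ \tfrac12\,\E\left(|G|^k\right)\ \ge\ \left(c\,k\,\Psi(t)\right)^{k/2}
\]
for a universal $c>0$ and all $k\ge 2$; along $t=\e^{-k}$ the right-hand side is $(c\,k^{2-\alpha})^{k/2}\to\infty$, whereas $(1+kt)^{k/2}\to1$. (The paper's own proof makes the same unjustified assertion in its final display, namely that $1+k\int_0^t\max\{1,s^{-1}[\log(1/s)]^{-\alpha}\}\,\d s\asymp 1+kt$; for small $t$ the left-hand side is $\asymp 1+k[\log(1/t)]^{1-\alpha}$, which is much larger.) What your computation actually proves --- and all that the paper's does --- is $\|u(t,x)\|_k^2\lesssim 1+k(1+t)$, i.e.\ $\sup_{x}\E(|u(t,x)|^k)\le A^k\bigl(1+k(1+t)\bigr)^{k/2}$, which is consistent with Remark \ref{rem:Var[Z]} and is the bound one should record. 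In short: your steps through the $L^k$-norm estimate are correct and match the paper; the final algebraic passage cannot be completed, because the inequality fails in precisely the regime you identified.
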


\begin{proof}
	In accord with \eqref{mild} and suitable form of the Burkholder--Davis--Gundy inequality
	\cite{Khoshnevisan},
	\begin{align*}
		&\|u(t\,,x)\|_k^2  \lesssim 1 + \left\| 
			\int_{[0,t]\times\R^3}
			p_{t-s}(y-x)\sigma(u(s\,,y))\,\eta(\d s\,\d y)\right\|_k^2\\
		&\le 1+4k\int_0^t\d s\int_{\R^3}\d y\int_{\R^3}\d y'\
			p_{t-s}(y-x) p_{t-s}(y'-x){\left\| \sigma(u(s\,,y))\cdot
			\sigma(u(s\,,y'))\right\|_{k/2}}f(y-y')\\
		&\lesssim 1 + k\int_0^t\d s\int_{\R^3}\d y\int_{\R^3}\d y'\
			p_{t-s}(y-x) p_{t-s}(y'-x) f(y-y'),
	\end{align*}
	uniformly for all $(t\,,x\,,k)\in\R_+\times\R^3\times[2\,,\infty)$.
	Now, the boundedness of the function $\sigma$ simplifies the
	preceding as follows:
	\begin{align*}
		\|u(t\,,x)\|_k^2 &\lesssim 1+k
			\int_0^t\d s\int_{\R^3}\d y\int_{\R^3}\d y'\ p_s(y)p_s(y')f(y-y')\\
		&=1+k\int_0^t\d s\int_{\R^3}\d y\ p_s(y)(p_s*f)(y)\\
		&=1+k\int_0^t(p_{2s}*f)(0)\,\d s.
	\end{align*}
	Since $p_{2s}$ and $f$ are both positive semi-definite, so is their convolution.
	Moreover, $p_{2s}*f$ is manifestly continuous. Therefore, by elementary properties
	of continuous, positive definite functions,
	$p_{2s}*f$ is maximized at the origin. Therefore, Lemma \ref{lem:f} implies that
	\[
		(p_{2s}*f)(w) \le (p_{2s}*f)(0)
		\lesssim \frac{1}{s[\log(1/s)]^\alpha}\qquad\text{for all
		$s\in(0\,,\e^{-1}]$ and $w\in\R^3$}.
	\]
	Moreover, the semigroup property of the heat kernel implies that
	$p_{2s}=p_{{2/\e}}*p_{2(s-(1/\e))}$ for all $s>\e^{-1}$,
	whence
	\[
		(p_{2s}*f)(0) \lesssim \left[ p_{2(s-(1/\e))} *(p_{2(1/\e)}*f)\right](0)
		\le\sup_{w\in\R^3}(p_{2(1/\e)}*f)(w)
		\lesssim 1,
	\]
	uniformly for all $s\ge\e^{-1}$.
	Consequently,
	\[
		\|u(t\,,x)\|_k^2 \lesssim 1 + k\int_0^t
		\max\left\{ 1~,~\frac{1}{s[\log(1/s)]^\alpha}\right\}\d s
		\asymp 1+kt,
	\]
	uniformly for all $(t\,,x\,,k)\in\R_+\times\R^3\times[2\,,\infty)$.
\end{proof}

\begin{remark}\label{rem:Var[Z]}
	In order to highlight the efficacy of Proposition \ref{pr:LyapunovExp}, let us
	consider the case that $\sigma$ is a constant function; say,
	$\sigma\equiv1$. In that case, $u(t\,,x)$ is a mean-one Gaussian
	random variable with variance,
	\[
		\Var[u(t\,,x)] =  \int_0^t\d s\int_{\R^3}\d y
		\int_{\R^3}\d y'\ p_{t-s}(y-x)p_{t-s}(y'-x)f(y-y')
		\asymp 1+t,
	\]
	by the same sort of computation as the one used in the course of
	the proof of Proposition \ref{pr:LyapunovExp}. Special properties
	of mean-zero Gaussian distributions then imply that when $\sigma\equiv1$,
	\[
		\E\left(|u(t\,,x)-1|^k\right) \asymp k^{k/2}\left\{ \Var[u(t\,,x)]\right\}^{k/2}
		\asymp k^{k/2}(1+t)^{k/2},
	\]
	uniformly for all $(t\,,x\,,k)\in\R_+\times\R^3\times[2\,,\infty)$.
	One can conclude readily from this inequality that the statement of
	Proposition \ref{pr:LyapunovExp} is, in its essence, unimprovable.
\end{remark}

\subsection{Moment bounds for the spatial and temporal increments}
In this subsection we give estimates for the quantity
\[
	\E\left(|u(t\,,x) - u(t'\,,x')|^k\right),
\]
when $t\approx t'$ and $x\approx x'$.
These estimates will be used in an ensuing ``local linearization argument''
that will be highlighted in Proposition \ref{pr:localization}. 
Throughout this paper, we set $\log_+\theta:=\log(\theta\vee\e)$ for all $\theta\in\R$.

\begin{proposition}\label{pr:u(x)-u(x')}
	 Assume that $\sigma$ is bounded. Then for all $T\in(0\,,\infty)$ 
	 there exists a finite constant $A$ depending on $T$ such that
	\[
		\sup_{t\in(0,T)}
		\E\left(|u(t\,,x) - u(t\,,x')|^k\right) \le 
		\frac{{(Ak)^{k/2}}}{%
		\left[\log_+(1/\|x-x'\|)\right]^{(\alpha-1) k/2}},
	\]
	uniformly for all distinct $x,x'\in\R^3$, and all real numbers $k\ge2$.
\end{proposition}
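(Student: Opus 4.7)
\medskip
\noindent\textbf{Proof plan.} Starting from the mild form \eqref{mild}, I would first write
\[
    u(t\,,x)-u(t\,,x') = \int_{[0,t]\times\R^3}
    \bigl[p_{t-s}(y-x)-p_{t-s}(y-x')\bigr]\sigma(u(s\,,y))\,\eta(\d s\,\d y).
\]
Applying the Burkholder--Davis--Gundy inequality exactly as in the proof of Proposition \ref{pr:LyapunovExp}, together with the boundedness of $\sigma$, gives
\[
    \|u(t\,,x)-u(t\,,x')\|_k^2
    \lesssim k\int_0^t\!\d s\iint \bigl[p_{t-s}(y-x)-p_{t-s}(y-x')\bigr]\bigl[p_{t-s}(y'-x)-p_{t-s}(y'-x')\bigr] f(y-y')\,\d y\,\d y',
\]
uniformly for $t\in(0\,,T)$ and $k\ge 2$. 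The implicit constant depends on $T$ only through $\|\sigma\|_\infty$.

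\medskip
\noindent The next step is to diagonalize in Fourier variables. Setting $\delta:=\|x-x'\|$ and using Parseval together with $\widehat{p_s}(z)=\e^{-s\|z\|^2/2}$, the inner double integral equals a constant multiple of
\[
    \int_{\R^3}\widehat{f}(z)\,\e^{-(t-s)\|z\|^2}\bigl|\e^{iz\cdot x}-\e^{iz\cdot x'}\bigr|^2\,\d z
    = 2\int_{\R^3}\widehat{f}(z)\,\e^{-(t-s)\|z\|^2}\bigl(1-\cos(z\cdot(x-x'))\bigr)\,\d z.
\]
Carrying out the $s$-integration yields $\int_0^t\e^{-(t-s)\|z\|^2}\d s \le T\wedge \|z\|^{-2}$, and one bounds $1-\cos(z\cdot(x-x'))\le\tfrac12(\|z\|\delta)^2\wedge 2$. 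Consequently
\[
    \|u(t\,,x)-u(t\,,x')\|_k^2 \;\lesssim\; k\int_{\R^3}\widehat{f}(z)\,\bigl(T\wedge\|z\|^{-2}\bigr)\,\bigl[(\|z\|\delta)^2\wedge 1\bigr]\,\d z.
\]

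\medskip
\noindent The remaining task is to show that the above $z$-integral is $O\bigl([\log_+(1/\delta)]^{-(\alpha-1)}\bigr)$ as $\delta\downarrow 0$. I would split $\R^3$ into low frequencies $\|z\|\le 1/\delta$ and high frequencies $\|z\|>1/\delta$. Using part 5 of Theorem \ref{th:corr}, namely $\widehat{f}(z)\asymp\|z\|^{-1}(\log\|z\|)^{-\alpha}$ for $\|z\|\ge\e^{-1}$, a switch to spherical coordinates reduces each piece to a one-dimensional integral in $r=\|z\|$. The high-frequency contribution becomes, up to constants, $\int_{1/\delta}^\infty r^{-1}(\log r)^{-\alpha}\d r\asymp (\alpha-1)^{-1}[\log(1/\delta)]^{-(\alpha-1)}$, while the low-frequency piece uses the bound $(\|z\|\delta)^2$ and yields $\delta^2\int_{1}^{1/\delta} r(\log r)^{-\alpha}\d r \asymp [\log(1/\delta)]^{-\alpha}$; contributions from $\|z\|\le 1$ are trivially bounded since $\widehat f$ is bounded on compacta and $T$ is fixed. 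Both contributions are $\lesssim[\log(1/\delta)]^{-(\alpha-1)}$, whence
\[
    \|u(t\,,x)-u(t\,,x')\|_k^2\;\lesssim\;\frac{k}{[\log_+(1/\delta)]^{\alpha-1}},
\]
and raising to the $k/2$ power gives the stated bound.

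\medskip
\noindent The main obstacle is the frequency-splitting calculation: the exponent $\alpha-1$ is extracted from the logarithmic factor by integrating $(\log r)^{-\alpha}/r$ over $[1/\delta,\infty)$, so one must make sure the low-frequency piece is not the dominant contribution and that the $T\wedge\|z\|^{-2}$ factor does not degrade the decay. Once the two regimes are balanced correctly at $\|z\|=1/\delta$, the whole estimate falls out cleanly; the boundedness of $\sigma$ is essential, since it is what lets one replace the nonlinear $\sigma(u)\sigma(u)$ by a constant in the BDG bound.
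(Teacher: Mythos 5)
There is a genuine gap at the Fourier step, and it is precisely the point the paper's proof is built around. When you apply the Burkholder--Davis--Gundy inequality to the nonlinear equation, the quantity you must control is the $L^{k/2}(\P)$-norm of the quadratic variation
\[
	\int_0^t\d s\iint \mathcal{P}_{t-s}(y)\,\mathcal{P}_{t-s}(y')\,
	\sigma(u(s,y))\,\sigma(u(s,y'))\,f(y-y')\,\d y\,\d y',
	\qquad \mathcal{P}_r(a):=p_r(a-x)-p_r(a-x').
\]
Passing the $L^{k/2}$-norm inside via Minkowski's integral inequality and then invoking the boundedness of $\sigma$ forces you to replace the signed product $\mathcal{P}_{t-s}(y)\mathcal{P}_{t-s}(y')$ by its absolute value $|\mathcal{P}_{t-s}(y)\mathcal{P}_{t-s}(y')|$: the random factor $\sigma(u(s,y))\sigma(u(s,y'))$ varies with $(s,y,y')$, so you cannot retain the signed kernel, and the signed double integral is not an upper bound for the one with absolute values. (In Proposition \ref{pr:LyapunovExp} this issue is invisible because the kernel $p_{t-s}(y-x)$ is nonnegative; here $\mathcal{P}$ changes sign.) Once the absolute values are in place, Parseval no longer applies --- $\iint |\mathcal{P}(y)\mathcal{P}(y')|f(y-y')\,\d y\,\d y'$ is not of the form $\langle g, f*g\rangle$ --- so your diagonalization and the resulting integral $\int \widehat{f}(z)\,\e^{-(t-s)\|z\|^2}\,(1-\cos(z\cdot(x-x')))\,\d z$ do not bound the correct quantity.

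Your Fourier computation is essentially correct for the constant-coefficient field $Z$ (it reproduces Proposition \ref{pr:Z:modulus}), and the final asymptotics $[\log_+(1/\|x-x'\|)]^{1-\alpha}$ are the right target. But the missing idea is how to show that the absolute values are harmless; the paper flags exactly this danger and handles it in real space, using the $L^1$ estimate $\int|p_s(y-z)-p_s(y)|\,\d y\lesssim (\|z\|/\sqrt{s})\wedge 1$ on the far region $\|y-y'\|>\|x-x'\|$ together with a dyadic decomposition of the near region $\|y-y'\|\le\|x-x'\|$ and the pointwise bounds on $\varphi$ from Theorem \ref{th:corr}. Without an argument of that type (or some other justification that the signed and unsigned integrals are comparable here), the proof is incomplete.
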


\begin{proof}
	Choose and fix $t>0$ and $x,x'\in\R^3$.
	According to \eqref{mild} and a suitable application of
	the BDG inequality (see \cite{Khoshnevisan} for details), for every real number $k\ge 2$,
	\begin{align*}
		&\E\left(\left| u(t\,,x) - u(t\,,x')\right|^k\right)\\ 
		&= \E\left(\left| \int_{(0,t)\times\R^3}
			(p_{t-s}(y-x)-p_{t-s} (y-x') ) \sigma(u(s\,,y))\,\eta(\d s\,\d y)\right|^k\right)\\
		&\le (4k)^{k/2}\left[\int_0^t\d s\int_{\R^3}\d y\int_{\R^3}\d y'\
			\left|\mathcal{P}_{t-s}({y})\mathcal{P}_{t-s}({y'}) \right|
			f(y-y')\mathcal{E}(s\,,y){\mathcal{E}(s\,,y')} \right]^{k/2},
	\end{align*}
	where
	\[
		\mathcal{P}_r(a) := p_r(a-x)-p_r(a-x')\qquad\text{for all $r>0$ and $a\in\R^3$},
	\]
	and 
	\[
		\mathcal{E}(s\,,y) := \left\{ \E\left( | \sigma(u(s\,,y))|^k\right)\right\}^{1/k}\,.
	\]
	Since $\sigma$ is {bounded},
	there exists a finite constant $B>1$ such that uniformly for
	 all $t>0$, $k\ge 2$, $y\in\R^3$, and $s\in[0\,,t]$,
	\begin{align}
		\E\left(\left| u(t\,,x) - u(t\,,x')\right|^k\right)
			&\le B^k k^{k/2}
			\left[\int_0^t\d s\int_{\R^3}\d y\int_{\R^3}\d y'\
			\left| \mathcal{P}_s(y)\mathcal{P}_s(y')\right|
			f(y-y')\right]^{k/2}
			\label{E(u-u)}\\\notag
		&= B^k k^{k/2}
			\left[\int_0^t\d s\int_{\R^3}\d y\int_{\R^3}\d y'\
			\left| \mathcal{P}_s(y)\mathcal{P}_s(y')\right|
			\varphi(\|y-y'\|)\right]^{k/2};
	\end{align}   
	see \eqref{varphi} for the definition of $\varphi$.
	At first one might expect that the absolute values in the integral
	introduce additional logarithmic factors which can damage our
	estimates since the left-hand side is quite large already [remember
	that we are trying to prove that the left-hand side is at most
	a negative power of the iterated logarithm of $\|x-x'\|$]. Remarkably,
	the introduction of the absolute values turns out to be harmless.
	In order to prove this we will use the following elementary inequality: Uniformly for all
	$z\in\R^3$ and $s>0$,
	\begin{equation}\label{L1}
		\int_{\R^3}  |p_s(y-z)-p_s(y)|\,\d y\lesssim
		\frac{\|z\|}{\sqrt s}\wedge 1.
	\end{equation}
	For a detailed proof see Lemma 6.4 in \cite{CJKS}. Now we analyze \eqref{E(u-u)}.

	Throughout the remainder of this calculation, let us define
	\[
		z := x-x'.
	\]
	Theorem \ref{th:corr} and
	two back-to-back applications of \eqref{L1} together show that
	\begin{equation}\label{CASE1}\begin{split}
		\int_0^t\d s\iint_{\substack{y,y\in\R^3:\\|y-y'|>\|z\|}}
			\d y&\,\d y'\
			\left| \mathcal{P}_s(y)\mathcal{P}_s(y')\right|
			\varphi(\|y-y'\|)\lesssim \varphi (\|z\|)
			\int_0^t\left( \frac{\|z\|}{\sqrt s}\wedge 1\right)^2\,\d s\\
		&\lesssim \|z\|^{-2}[\log_+(1/\|z\|)]^{-\alpha}\cdot
			\int_0^t\left( \frac{\|z\|}{\sqrt s}\wedge 1\right)^2\,\d s\\
		&\lesssim [\log_+(1/\|z\|)]^{1-\alpha}.
	\end{split}\end{equation}
	Next we estimate the same integral as above,
	but with its region of integration
	replaced by $\{y,y'\in\R^3:\, \|y-y'\|\le\|z\|\}$. [It might help
	to consult \eqref{E(u-u)} to see why.]
	With this aim
	in mind, define for all $y\in\R^3$ and for every integer $n\ge 0$,
	\[
		\mathcal{A}_n(y) := \left\{ y'\in\R^3:\
		\|y-y'\|\le 2^{-n}\|z\|\right\}.
	\]
	By the monotonicity properties of $\varphi$ [Theorem \ref{th:corr}],
	\begin{equation}\label{p<H}
		\int_0^t\d s\hskip-.3in\mathop{\iint}\limits_{\substack{y,y'\in\R^3:\\
		y'\in\mathcal{A}_n(y)\setminus\mathcal{A}_{n+1}(y)}}
		\hskip-.7cm\d y\,\d y'\
		\left| \mathcal{P}_s(y)\mathcal{P}_s(y')\right|\varphi(\|y-y'\|)\\
		\lesssim \varphi\left(2^{-n-1}\|z\|\right)
		\int_0^t H_n(s)\,\d s,
	\end{equation}
	all the time noting that the implied constant also
	does not depend on $(x\,,x')$---whence also on $z$---and
	\[
		H_n(s) := \int_{\R^3}\d y
		\int_{\mathcal{A}_n(y)\setminus\mathcal{A}_{n+1}(y)}\d y'\
		\left| p_s(y-z) - p_s(y)\right|\cdot\left| p_s(y'-z)-p_s(y')\right|.
	\]
	The elementary properties of the heat kernel $p$ and the inequality
	\eqref{L1} together allow us to write
	\begin{align*}
		\int_{\mathcal{A}_n(y)\setminus\mathcal{A}_{n+1}(y)}
			\left| p_s(y'-z)-p_s(y')\right|\,\d y'
			&\le \int_{\mathcal{A}_n(y)\setminus\mathcal{A}_{n+1}(y)}
			\left| p_s(y'-z)+p_s(y')\right|\,\d y'  \\
		&\lesssim \left(\frac{|\mathcal{A}_n(y)\setminus
			\mathcal{A}_{n+1}(y)|}{s^{3/2}}\wedge 1\right)\\
		&\lesssim\left(\frac{2^{-n}\|z\|}{s^{1/2}}\wedge 1\right)^3,
	\end{align*}
	where the implied constant does not depend on $(n\,,s\,,z)$. Therefore,
	\[
		H_n(s) \lesssim
		\left(\frac{\|z\|}{\sqrt s}\wedge 1\right)
		\cdot\left(\frac{2^{-n}\|z\|}{\sqrt{s}}\wedge 1\right)^3,
	\]
	where the implied constant does not depend on $(n\,,s\,,z)$.
	In particular,
	\begin{align*}
		\int_0^tH_n(s)\,\d s &\lesssim\int_0^t
			\left(\frac{\|z\|}{\sqrt s}\wedge 1\right)
			\cdot\left(\frac{2^{-n}\|z\|}{\sqrt{s}}\wedge 1\right)^3\,\d s\\
		&\lesssim \frac{\|z\|^2}{4^n} + \frac{\|z\|^3}{8^n}
			\int_{4^{-n}\|z\|^2}^{\|z\|^2} s^{-3/2}\,\d s + \frac{\|z\|^4}{8^n}
			\int_{\|z\|^2}^\infty s^{-2}\,\d s\\
		&\lesssim\frac{\|z\|^2}{4^n},
	\end{align*}
	where the implied constant does not depend on $(n\,,t\,,z)$.
	In light of the preceding bound and \eqref{p<H}, we find that
	\begin{align*}
		&\int_0^t\d s\iint_{\substack{y,y\in\R^3:\\|y-y'|\leq\|z\|}}
			\d y\ \d y'\
			\left| \mathcal{P}_s(y)\mathcal{P}_s(y')\right|
			\varphi(\|y-y'\|) \\
		&\hskip2in=\sum_{n=0}^\infty
			\int_0^t\d s\hskip-.3in\iint\limits_{\substack{y,y'\in\R^3:\\
			y'\in\mathcal{A}_n(y)\setminus\mathcal{A}_{n+1}(y)}}
			\hskip-.7cm\d y\,\d y'\
			\left| \mathcal{P}_s(y)\mathcal{P}_s(y')\right|\varphi(\|y-y'\|)\\
		&\hskip2in\lesssim\sum_{n=0}^\infty \frac{\|z\|^2}{4^n}
			\varphi\left( 2^{-n-1}\|z\|\right),
	\end{align*}
	where the implied constants do not depend on $(t\,,z)$. Therefore,
	Theorem \ref{th:corr} ensures that
	\begin{align}\notag
		\int_0^t\d s\iint_{\substack{y,y\in\R^3:\\|y-y'|\leq\|z\|}}
			\d y \ \d y'\
			\left| \mathcal{P}_s(y)\mathcal{P}_s(y')\right|
			\varphi(\|y-y'\|) &\lesssim\sum_{n=0}^\infty \frac{1}{
			(\log_+(2^{n+1}/\|z\|))^{\alpha}}\\\notag
		&\lesssim\|z\|\cdot
			\sum_{n=0}^\infty 2^{-n} \int_{2^n/\|z\|}^{2^{n+1}/\|z\|}
			\frac{\d r}{(\log_+ r)^{\alpha}}\\\notag
		&\lesssim\int_{1/\|z\|}^\infty\frac{\d r}{r (\log_+ r)^{\alpha}
			}\\
		&\propto \left[\log_+(1/\|z\|)\right]^{-\alpha+1},
		\label{CASE2}
	\end{align}
	and, as before, the implied constants do not depend on $(t\,,z)$. In
	light of \eqref{E(u-u)}, \eqref{CASE1}, and \eqref{CASE2}, we can deduce the existence of
	a finite constant $B$ such that,
	uniformly for
	 all $0<t <T$ and $k\ge 2$,
	\[
		\E\left(\left| u(t\,,x) - u(t\,,x')\right|^k\right)\\
		\le \frac{{B^k k^{k/2}}}{%
		\left[\log_+\left( 1/\|x-x'\|\right)\right]^{(\alpha-1)k/2}},
	\]
	where $B$ is also independent of $(x\,,x')$.
	We can replace $A$ by a possibly larger constant in order to complete
	the proof of the result.
\end{proof}




Next, let us consider bounds on the temporal increments of the solution to
\eqref{SHE}. The main result of the section is recorded as the following proposition. 

\begin{proposition}\label{pr:u(t)-u(t')}
	Assume that $\sigma$ is bounded. Then for all $T \in (0, \infty)$ there exists a finite constant $A$ depending on $T$ 
	such that 
	
	\[
		\sup_{x\in\R^3}
		\E\left(|u(t\,,x) - u(t'\,,x)|^k\right) \le 
		\frac{A^k k^{k/2}}{%
		\left[\log_+(1/\|t-t'\|)\right]^{(\alpha-1) k/2}},  
	\]
	valid uniformly for all distinct $t,t'\in[0\,,T]$ and all real numbers $k\ge2$.
\end{proposition}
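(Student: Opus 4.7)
Without loss of generality assume $t<t'$ and set $h:=t'-t$. The plan is to mimic the structure of the proof of Proposition \ref{pr:u(x)-u(x')}, but with the spatial displacement $z=x-x'$ replaced by the temporal displacement $h$. Starting from the mild form \eqref{mild}, I would split the increment as $u(t',x)-u(t,x)=I_1+I_2$, where
\[
I_1 := \int_{[0,t]\times\R^3}\bigl[p_{t'-s}(y-x)-p_{t-s}(y-x)\bigr]\sigma(u(s,y))\,\eta(\d s\,\d y),
\]
\[
I_2 := \int_{[t,t']\times\R^3}p_{t'-s}(y-x)\sigma(u(s,y))\,\eta(\d s\,\d y),
\]
and apply the Burkholder--Davis--Gundy inequality together with the boundedness of $\sigma$ to each piece, exactly as in \eqref{E(u-u)}.

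For $I_2$, after the change of variables $r=t'-s$ the quadratic variation bound reduces to $k\int_0^h (p_{2r}*f)(0)\,\d r$. Applying Lemma \ref{lem:f} (and noting $(p_{2r}*f)(0)$ is bounded for $r\ge\e^{-1}$), this integral is controlled by $\int_0^{h\wedge\e^{-1}} r^{-1}[\log(1/r)]^{-\alpha}\,\d r\asymp[\log_+(1/h)]^{1-\alpha}$, which is already of the correct order.

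For $I_1$, setting $r=t-s$, I need the estimate
\[
\mathcal{J} := \int_0^t \d r\iint_{\R^3\times\R^3}\bigl|p_{r+h}(y-x)-p_r(y-x)\bigr|\bigl|p_{r+h}(y'-x)-p_r(y'-x)\bigr|\,f(y-y')\,\d y\,\d y'\lesssim[\log_+(1/h)]^{1-\alpha}.
\]
The key analog of \eqref{L1} is the elementary bound $\int_{\R^3}|p_{r+h}(y)-p_r(y)|\,\d y\lesssim (h/r)\wedge 1$, which follows by differentiating $p_r$ in $r$ and integrating (or directly from $\int|\partial_r p_r|\,\d y\lesssim 1/r$). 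I then split the $(y,y')$ integral at the natural scale $\|y-y'\|=\sqrt{h}$. In the ``far'' region $\|y-y'\|>\sqrt{h}$, I bound $\varphi(\|y-y'\|)\le\varphi(\sqrt h)\lesssim h^{-1}[\log_+(1/h)]^{-\alpha}$ and use Fubini plus the above $L^1$ bound to reduce to $\int_0^t((h/r)\wedge 1)^2\d r\lesssim h$, which yields a contribution of order $[\log_+(1/h)]^{-\alpha}\le[\log_+(1/h)]^{1-\alpha}$. In the ``near'' region $\|y-y'\|\le\sqrt h$, I perform a dyadic decomposition $\mathcal{A}_n(y):=\{y':\|y-y'\|\le 2^{-n}\sqrt h\}$ exactly as in the proof of Proposition \ref{pr:u(x)-u(x')}, using the uniform bound $\int_{\mathcal{A}_n(y)}(p_{r+h}+p_r)(y'-x)\,\d y'\lesssim (2^{-n}\sqrt{h}/\sqrt r)^3\wedge 1$ for the inner integral. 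A direct computation shows $\int_0^t H_n(r)\,\d r\lesssim 4^{-n}h$, so that, since $\varphi(2^{-n-1}\sqrt h)\lesssim 4^{n+1}h^{-1}[\log_+(2^{n+1}/\sqrt h)]^{-\alpha}$, the sum over $n$ telescopes to $\sum_{n\ge 0}[\log_+(1/h)+n]^{-\alpha}\asymp[\log_+(1/h)]^{1-\alpha}$, precisely as in the spatial case.

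Combining the bounds on $I_1$ and $I_2$ and using BDG gives $\|u(t',x)-u(t,x)\|_k^2\lesssim k\,[\log_+(1/h)]^{1-\alpha}$ uniformly in $x\in\R^3$ and $0<t<t'\le T$, which is the claim. The main technical point I anticipate is verifying carefully that the dyadic computation for the near region really does produce the factor $4^{-n}h$ (the three-piece integral split in $r$) so that the geometric weights cancel the $\varphi$-factor and leave only the logarithmic sum; this is the temporal analog of the most delicate estimate in the proof of Proposition \ref{pr:u(x)-u(x')}. Everything else is a translation of the spatial argument in which the spatial length $\|z\|$ is replaced by the diffusive length $\sqrt h$.
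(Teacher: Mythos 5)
Your proof is correct and follows essentially the same route as the paper: the same splitting of the increment into a recent-time piece and a kernel-difference piece, the same $L^1$ bound on $p_{r+h}-p_r$ yielding $\int_0^t\Norm{p_{\cdot+h}-p_\cdot}_{L^1(\R^3)}^2\,\d r\lesssim h$, and the same dyadic decomposition at scale $\sqrt h$ near the diagonal. The only divergence is that you estimate the recent-time piece in physical space via $\int_0^h(p_{2r}*f)(0)\,\d r$ together with Lemma \ref{lem:f}, whereas the paper's Lemma \ref{lem:T_1:temporal} passes to Fourier space via Parseval; both give $[\log_+(1/h)]^{1-\alpha}$, and your route is, if anything, slightly shorter.
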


\begin{proof}
	We write
	$u(t+h\,,x) - u(t\,,x) = \cT_1(t\,,h\,,x) + \cT_2(t\,,h\,,x),$
	where
	\begin{equation}\label{TT12}\begin{split}
		\cT_1(t\,,h\,,x) &:= \int_{(t,t+h)\times\R^3} p_{t+h-s}(y-x) \sigma(u(s\,,y))\,\eta(\d s\,\d y),\\
		\cT_2(t\,,h\,,x) &:= \int_{(0,t)\times\R^3}
			\left[ p_{t+h-s}(y-x) - p_{t-s}(y-x)\right] \sigma(u(s\,,y))\,\eta(\d s\,\d y).
	\end{split}\end{equation}
	The proof readily follows from combining the subsequent
	Lemmas \ref{lem:T_1:temporal} through \ref{lem:T_2:temporal}.
\end{proof}

\begin{lemma}\label{lem:T_1:temporal}
	Recall $\cT_1(t\,,h\,,x)$ from \eqref{TT12}.
	If $\sigma$ is bounded,  then there exists a finite constant $A$
	such that
	\[
		\E\left(|\cT_1(t\,,h\,,x)|^k\right) \le \frac{ A^k k^{k/2}
		}{%
		\left[\log\left( 1/h\right)\right]^{k(\alpha-1)/2}},
	\] 
	uniformly for all $t>0$, $h\in(0\,,\e^{-2})$,
	$x\in\R^3$, and $k\in[2\,,\infty)$.
\end{lemma}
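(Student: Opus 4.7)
The plan is to bound $\cT_1(t,h,x)$ by a Walsh/BDG-type stochastic integral inequality and then reduce the resulting double-spatial integral to a single time integral of $(p_{2r} * f)(0)$, which was already estimated sharply in Lemma \ref{lem:f}.

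First I would apply the Burkholder--Davis--Gundy inequality, exactly as was used in the proof of Proposition \ref{pr:LyapunovExp}, to obtain
\[
\|\cT_1(t,h,x)\|_k^2 \le 4k \int_t^{t+h}\!\!\d s\! \int_{\R^3}\!\!\d y \int_{\R^3}\!\!\d y'\, p_{t+h-s}(y-x)\, p_{t+h-s}(y'-x)\,\|\sigma(u(s,y))\sigma(u(s,y'))\|_{k/2}\, f(y-y').
\]
Because $\sigma$ is assumed bounded, the term $\|\sigma(u(s,y))\sigma(u(s,y'))\|_{k/2}$ is dominated by a constant independent of $s,y,y'$, so the triple integral factors through the translation-invariance of the heat kernel and the convolution identity $\int\!\!\int p_r(y)p_r(y')f(y-y')\,\d y\,\d y' = (p_{2r}*f)(0)$. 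After the change of variable $r = t+h-s$, this yields the clean bound
\[
\|\cT_1(t,h,x)\|_k^2 \lesssim k \int_0^h (p_{2r}*f)(0)\,\d r,
\]
uniformly in $t,x,k$.

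Next I would invoke Lemma \ref{lem:f} to write $(p_{2r}*f)(0) \asymp r^{-1}[\log(1/r)]^{-\alpha}$ for all $r \in (0,\e^{-1})$. Since $h < \e^{-2}$ forces the whole integration range $(0,h)$ to lie well inside $(0,\e^{-1})$, the substitution $v = \log(1/r)$ converts the time integral into
\[
\int_0^h \frac{\d r}{r[\log(1/r)]^\alpha} = \int_{\log(1/h)}^{\infty} \frac{\d v}{v^\alpha} = \frac{[\log(1/h)]^{1-\alpha}}{\alpha-1},
\]
using here that $\alpha > 1$ so the improper integral converges. Combining the two displays gives
\[
\|\cT_1(t,h,x)\|_k^2 \lesssim \frac{k}{[\log(1/h)]^{\alpha-1}},
\]
and raising both sides to the power $k/2$ delivers the stated bound, with $A$ absorbing all the implied constants.

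There is no real obstacle here: the boundedness of $\sigma$ trivialises the nonlinear factor in the BDG bound, and Lemma \ref{lem:f} is precisely calibrated to yield the logarithmic gain after one integration in time. The only point deserving attention is the choice of the threshold $h < \e^{-2}$, which is what guarantees that Lemma \ref{lem:f} applies on the entire integration interval and that $\log(1/h)$ is bounded away from zero so that the expression in the denominator is unambiguously positive.
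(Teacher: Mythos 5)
Your proof is correct, and it starts exactly as the paper does: the same BDG bound, the same use of the boundedness of $\sigma$ to discard the nonlinear factor, and the same reduction of the double spatial integral to $\int_0^h(p_{2r}*f)(0)\,\d r$ via the convolution identity. Where you diverge is in how this time integral is estimated. The paper passes to Fourier space with Parseval's identity, obtaining $k\int_{\R^3}\|w\|^{-2}\bigl(1-\e^{-h\|w\|^2}\bigr)\widehat{f}(w)\,\d w$, and then splits the frequency domain at $\|w\|=h^{-1/2}$ using the asymptotics of $\widehat{f}$ from part 5 of Theorem \ref{th:corr}. You instead stay in physical space and feed the integrand directly into Lemma \ref{lem:f}, so that the substitution $v=\log(1/r)$ gives $\int_0^h r^{-1}[\log(1/r)]^{-\alpha}\,\d r=[\log(1/h)]^{1-\alpha}/(\alpha-1)$ in one line; the hypothesis $h<\e^{-2}$ keeps the whole range inside the window $(0,\e^{-1})$ where Lemma \ref{lem:f} applies, exactly as you note. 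Both routes land on the same bound $\|\cT_1(t,h,x)\|_k^2\lesssim k[\log(1/h)]^{1-\alpha}$; yours is shorter and reuses a lemma the paper has already proved, while the paper's Fourier computation is self-contained at this point and mirrors the technique it needs again elsewhere (e.g.\ in Proposition \ref{pr:Z:modulus}). There is no gap in your argument.
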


\begin{proof}
	A suitable form of the BDG inequality for martingales implies that
	\begin{align*}
		&\|\cT_1(t\,,h\,,x)\|_k^2\\
		&\le 4k\int_t^{t+h}\d s\int_{\R^3}\d y
			\int_{\R^3}\d y'\ p_{t+h-s}(y-x)p_{t+h-s}(y'-x)\, \|\sigma(u(s\,,y))
			\cdot\sigma(u(s\,,y'))\|_{k/2}
			\, f(y-y');
	\end{align*}
	see \cite{Khoshnevisan}. By the  boundedness of $\sigma$, 
	we obtain,
	 \begin{align*}
		\|\cT_1(t\,,h\,,x)\|_k^2 &\lesssim k
			\int_0^h\d s\int_{\R^3}\d y
			\int_{\R^3}\d y'\ p_s(y)p_s(y')  f(y-y')\\
		&\propto k 
			\int_0^h \d s\int_{\R^3}\d w\ \e^{-s\|w\|^2}  \widehat{f}(w)\\
		&= k  
			\int_{\R^3}\left( \frac{1-\e^{-h\|w\|^2}}{\|w\|^2}\right)
			\widehat{f}(w)\,\d w,  
	\end{align*}   
	thanks to Parseval's identity.
	Since $1-\exp(-a)\le \min(1\,,a)$ for all $a>0$, it then follows that
	\begin{equation}\label{cT2}
		\|\cT_1(t\,,h\,,x)\|_k^2 \lesssim k\
		\int_{\R^3} \min\left(\frac{1}{\|w\|^2}\,, h\right)\widehat{f}(w)\,\d w.
	\end{equation}  
	The integral can be considered separately in two parts: Where
	$\|w\|\le1/\sqrt{h}$ and where $\|w\|>1/\sqrt{h}$. The first part is estimated as follows: 
	\begin{align*}
		&h\int_{\|w\|\le1/\sqrt{h}}\widehat{f}(w)\,\d w
			= h\int_{\|w\|\le h^{-1/4}}\widehat{f}(w)\,\d w
			+h\int_{h^{-1/4}<\|w\|\le h^{-1/2}}\widehat{f}(w)\,\d w\\
		&\hskip1in\lesssim {h^{1/4}} + h\int_{h^{-1/4}\le \|w\|\le  h^{-1/2}}\frac{\d w}{\|w\|
			(\log\|w\|)^{\alpha}} \hskip1in\text{[see Theorem \ref{th:corr}]}\\
		&\hskip1in\propto {h^{1/4}} + h\int_{h^{-1/4}}^{h^{-1/2}}
			\frac{r\,\d r}{(\log r)^{\alpha}}\\
		&\hskip1in\asymp (\log(1/h))^{-\alpha}\\
		&\hskip1in\le \left[\log(1/h)\right]^{1-\alpha}.
	\end{align*}
	The second bound is handled similarly, viz.,
	\[
		\int_{\|w\|> 1/\sqrt{h}}\frac{\widehat{f}(w)}{\|w\|^2}\,\d w
		\lesssim\int_{\|w\|> 1/\sqrt{h}}\frac{\d w}{\|w\|^3 (\log\|w\|)^{\alpha} }
		\propto\int_{1/\sqrt h}^\infty\frac{\d r}{r(\log r)^{\alpha}}
		\propto\left[\log(1/h)\right]^{1-\alpha}.
	\]
	The lemma is a ready consequence of the preceding two displays and
	\eqref{cT2}.
\end{proof}

In order to estimate the quantity $\mathcal{T}_2(t\,,h\,,x)$ --- see \eqref{TT12} --- let us define
\begin{equation}\label{D}
	\mathcal{D}_r^{(h)}(a):=
	|p_{r+h}(a) - p_r(a)|\qquad
	\text{for all $r>0$ and $a\in\R^3$}.
\end{equation}

\begin{lemma}\label{lem:D1}
	For some universal constant $C>0$, it holds that
	\[
		\int_0^t\|\mathcal{D}^{(h)}_s\|^2_{L^1(\R^3)}  \,\d s
		\le C h \quad\text{for all $t, h>0$.}
	\]
\end{lemma}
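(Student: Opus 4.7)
The plan is to establish the uniform-in-$s$ bound $\|\mathcal{D}^{(h)}_s\|_{L^1(\R^3)} \lesssim \min(1\,, h/s)$, and then square and integrate. Squaring this minimum and integrating over $s\in(0\,,t)$ yields at most $\int_0^h 1\,\d s + h^2\int_h^\infty s^{-2}\,\d s \lesssim h$, which is exactly the claim.

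The only real step to verify is the rate $h/s$ for large $s$; this is what requires thought. A direct application of the semigroup identity $p_{s+h}=p_s*p_h$ together with the spatial-shift bound \eqref{L1} only produces
$$\|\mathcal{D}^{(h)}_s\|_{L^1(\R^3)} \le \int_{\R^3}p_h(y)\,\|p_s(\cdot-y)-p_s(\cdot)\|_{L^1}\,\d y \lesssim \sqrt{h/s},$$
which integrates to $h\log(t/h)$ and is \emph{insufficient} when $t/h$ is large. To do better I would exploit the heat equation $\partial_r p_r = \tfrac12\Delta p_r$ to write
$$p_{s+h}(a)-p_s(a) = \tfrac12\int_0^h (\Delta p_{s+r})(a)\,\d r,
\qquad\text{hence}\qquad
\|\mathcal{D}^{(h)}_s\|_{L^1(\R^3)} \le \tfrac12\int_0^h \|\Delta p_{s+r}\|_{L^1(\R^3)}\,\d r.$$

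Brownian scaling $p_r(a)=r^{-3/2}p_1(a/\sqrt r)$ immediately gives $\Delta p_r(a) = r^{-5/2}(\Delta p_1)(a/\sqrt r)$ and therefore $\|\Delta p_r\|_{L^1(\R^3)} = c_0/r$, where $c_0 := \|\Delta p_1\|_{L^1(\R^3)}<\infty$ because $\Delta p_1(b)\propto(\|b\|^2-3)\e^{-\|b\|^2/2}$ is Schwartz. Plugging this in yields
$$\|\mathcal{D}^{(h)}_s\|_{L^1(\R^3)} \le \tfrac12 c_0\log(1+h/s) \le \tfrac12 c_0\, h/s.$$
Combining this with the trivial bound $\|\mathcal{D}^{(h)}_s\|_{L^1(\R^3)}\le \|p_{s+h}\|_{L^1}+\|p_s\|_{L^1}=2$ proves the advertised $\min(1\,,h/s)$, after which the $s$-integration above is immediate.

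The only mild obstacle is recognizing that, because $\partial_r p_r$ is really a second-order \emph{spatial} differential operator, the $L^1(\R^3)$ temporal modulus of continuity of the heat kernel behaves like $h/s$ rather than the $\sqrt{h/s}$ suggested by the spatial-shift estimate \eqref{L1}; this one-power improvement is exactly what removes a stray logarithm and delivers the sharp $Ch$ bound.
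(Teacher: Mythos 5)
Your proof is correct and follows essentially the same route as the paper's: the paper also applies the fundamental theorem of calculus in the time variable, uses $\dot p_r=\tfrac12\Delta p_r$ together with the explicit formula $\dot p_r(x)=\frac{p_r(x)}{2r}\bigl[\frac{\|x\|^2}{r}-3\bigr]$ to get $\|\mathcal{D}^{(h)}_s\|_{L^1(\R^3)}\lesssim\log(1+h/s)$, and then integrates the square via $\int_0^\infty u^{-2}[\log(1+u)]^2\,\d u<\infty$. Your substitution of $\min(1\,,h/s)$ for $\log(1+h/s)$ before integrating is an equivalent, equally valid finish.
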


\begin{proof}
	Because
	\[
		\dot{p}_t(x):=\frac{\partial}{\partial t} p_t(x)= 
		\frac{p_t(x)}{2t}\left[ \frac{\|x\|^2}{t}-3\right]\qquad\text{for all
		$t>0$ and $x\in\R^3$},
	\]
	we apply the fundamental theorem of calculus to see that
	\[
		\mathcal{D}^{(h)}_s(x) \le \int_s^{s+h}|\dot{p}_r(x)|\,\d r
		\lesssim\int_s^{s+h}\frac{p_r(x)}{r}\left(\frac{\|x\|^2}{r}+1\right)\d r.
	\]
	Integrate the preceding $[\d x]$ in order to see that
	\[
		\|\mathcal{D}^{(h)}_s\|_{L^1(\R^3)} =
		\int_{\R^3}\mathcal{D}^{(h)}_s(x)\,\d x
		\lesssim\int_s^{s+h}\frac{\d r}{r}
		= \log\left( 1 + \frac hs\right).
	\]
	Hence, by change of variable $u=h/s$,
	\[
		\int_0^t\|\mathcal{D}^{(h)}_s\|_{L^1(\R^3)}^2\,\d s \lesssim
		\int_0^{t}\left[\log\left( 1 + \frac hs\right)\right]^2\d s
		\le A h,
	\]
	where $A :=\int_0^\infty u^{-2} \left[\log(1+u)\right]^2\,\d u<\infty$.
	This proves the lemma. 
\end{proof}

\begin{lemma}\label{lem:T_2:temporal}
	Recall $\cT_2(t\,,h\,,x)$ from \eqref{TT12}. 
	If $\sigma$ is bounded, then there exists a finite constant $A$
	such that
	\[
		\E\left(|\cT_2(t\,,h\,,x)|^k\right) \le \frac{A^kk^{k/2}}{
		\left[\log\left( 1/h\right)\right]^{k(\alpha-1)/2}},
	\]
	uniformly  for all $t\in [0\,,T]$, $h\in(0\,,\e^{-2})$,
	$x\in\R^3$, and $k\in[2\,,\infty)$.
\end{lemma}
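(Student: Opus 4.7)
The plan is to mirror the proof of Lemma \ref{lem:T_1:temporal}. I will begin by applying a suitable form of the Burkholder--Davis--Gundy inequality to the Walsh stochastic integral defining $\cT_2$, and then invoking the boundedness of $\sigma$, in order to conclude
\[
	\|\cT_2(t\,,h\,,x)\|_k^2 \lesssim k\int_0^t\d s\int_{\R^3}\d y\int_{\R^3}\d y'\,
	[p_{s+h}(y)-p_s(y)][p_{s+h}(y')-p_s(y')]f(y-y'),
\]
uniformly in $(t\,,x\,,k)$. A change of variables $s\mapsto t-s$ in the time integral eliminates the dependence on $x$. Although the integrand is not pointwise nonnegative, the triple integral is, since it coincides with the positive semidefinite quadratic form in $f$ evaluated on $p_{s+h}-p_s$.

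Next I will pass to Fourier variables via Parseval's identity. Using $\widehat{p_{s+h}-p_s}(w)=-\e^{-s\|w\|^2/2}(1-\e^{-h\|w\|^2/2})$ and integrating in $s\in[0\,,t]$ yields
\[
	\|\cT_2(t\,,h\,,x)\|_k^2 \lesssim k\int_{\R^3}\frac{1-\e^{-t\|w\|^2}}{\|w\|^2}(1-\e^{-h\|w\|^2/2})^2\widehat{f}(w)\,\d w.
\]
I will then bound the two oscillatory factors crudely by $(1-\e^{-t\|w\|^2})/\|w\|^2\le 1/\|w\|^2$ (uniformly in $t$) and $(1-\e^{-h\|w\|^2/2})^2\le\min(1\,,h\|w\|^2/2)$, which together give
\[
	\|\cT_2(t\,,h\,,x)\|_k^2 \lesssim k\int_{\R^3}\min\!\left(\|w\|^{-2}\,,\,h\right)\widehat{f}(w)\,\d w.
\]
This is identical, up to a harmless constant, to the Fourier-side estimate that already arose in the proof of Lemma \ref{lem:T_1:temporal}.

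From here, the remaining work is exactly the splitting of the $w$-integral that was carried out in that lemma: one separates into the regimes $\|w\|\le h^{-1/4}$, $h^{-1/4}<\|w\|\le h^{-1/2}$, and $\|w\|>h^{-1/2}$, and applies the asymptotics of $\widehat{f}$ from Theorem \ref{th:corr}(5) in each piece. The result is $\|\cT_2(t\,,h\,,x)\|_k^2 \lesssim k[\log(1/h)]^{1-\alpha}$, and raising to the $k/2$-th power then produces the stated bound $A^k k^{k/2}[\log(1/h)]^{-k(\alpha-1)/2}$. I expect no serious obstacle here; the only subtlety, and the reason one has to check anything at all, is that the time-integration for $\cT_2$ is over $[0\,,t]$ rather than $[0\,,h]$. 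However, the crude bound $(1-\e^{-t\|w\|^2})/\|w\|^2\le 1/\|w\|^2$ absorbs the $t$-dependence uniformly, so the constant $A$ in fact depends on $T$ only vacuously.
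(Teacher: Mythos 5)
There is a genuine gap at the very first step, and it is precisely the difficulty that forces the paper to treat $\cT_2$ differently from $\cT_1$. After the Burkholder--Davis--Gundy inequality, the quantity you must control is
\[
	\left\| \int_0^t\d s\int_{\R^3}\d y\int_{\R^3}\d y'\
	K_s(y)\,K_s(y')\,\sigma(u(s\,,y))\,\sigma(u(s\,,y'))\,f(y-y')\right\|_{k/2},
	\qquad K_s:=p_{s+h}-p_s .
\]
To replace the random factor $\sigma(u(s\,,y))\sigma(u(s\,,y'))$ by the constant $\sigma_0^2$ you must first pass the norm inside the integral (Minkowski's integral inequality), and that step produces $|K_s(y)|\,|K_s(y')|$, not $K_s(y)K_s(y')$. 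Your justification --- that the unsigned triple integral is nonnegative because $f$ is positive semidefinite --- shows only that $\iint K_sK_s'f\ge0$; it does not give the domination
$\int_0^t\langle K_s\sigma_s\,,f*(K_s\sigma_s)\rangle\,\d s\le \sigma_0^2\int_0^t\langle K_s\,,f*K_s\rangle\,\d s$ with $\sigma_s(y):=\sigma(u(s\,,y))$. In Fourier variables the left side is $\propto\int_0^t\d s\int|\widehat{K_s\sigma_s}(w)|^2\widehat f(w)\,\d w$, and multiplication by a bounded function is a convolution on the Fourier side, so there is no pointwise comparison with $\sigma_0^2|\widehat{K_s}(w)|^2$; heuristically, if $\sigma_s$ were allowed to align with $\operatorname{sign}(K_s)$ the left side would be $\sigma_0^2\int_0^t\langle |K_s|\,,f*|K_s|\rangle\,\d s$, which can exceed the right side. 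Since $K_s=p_{s+h}-p_s$ genuinely changes sign (unlike the kernel in Lemma \ref{lem:T_1:temporal}, which is a nonnegative heat kernel), the absolute values cannot be discarded, and once they are present Parseval no longer applies to the modulus $|K_s|$.

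This is exactly why the paper's proof abandons the Fourier route for $\cT_2$: it works in physical space with $\mathcal{D}^{(h)}_s=|p_{s+h}-p_s|$, uses the $L^1$ bound of Lemma \ref{lem:D1} together with $\varphi(\sqrt h)\asymp h^{-1}[\log(1/h)]^{-\alpha}$ on the off-diagonal region $\|y-y'\|\ge\sqrt h$, and a dyadic decomposition with the crude bound $\mathcal{D}^{(h)}_s\le p_{s+h}+p_s$ near the diagonal. Your subsequent Fourier arithmetic (the identity for $\widehat{p_{s+h}-p_s}$, the bound $\min(\|w\|^{-2},h)$, and the resulting $[\log(1/h)]^{1-\alpha}$) is internally consistent and would close the argument if the first display were available, but as written the first display is unjustified and the proof does not go through.
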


\begin{proof}
 	We begin as in the proof of the preceding lemma.
	Namely, we begin by observing
	that a suitable form of the BDG inequality for martingales implies that
	\begin{align*}
		&\|\cT_2(t\,,h\,,x)\|_k^2\\
		&\le 4k\int_0^t\d s\int_{\R^3}\d y
			\int_{\R^3}\d y'\ \mathcal{D}^{(h)}_{t-s}(y-x)\mathcal{D}^{(h)}_{t-s}(y'-x)\,
			\| \sigma(u(s\,,y))\cdot\sigma(u(s\,,y'))\|_{k/2}
			\, f(y-y'),
	\end{align*}
	By the boundedness of $\sigma$
	and the Cauchy--Schwarz inequality,
	\begin{align}\label{int:RHS}
		&\|\cT_2(t\,,h\,,x)\|_k^2 
			\lesssim  k 
			\int_0^t\d s\int_{\R^3}\d y
			\int_{\R^3}\d y'\ \mathcal{D}^{(h)}_s(y)\mathcal{D}^{(h)}_s(y') f(y-y'),
	\end{align}   
	where $\mathcal{D}_r^{(h)}(x)$ was defined earlier in \eqref{D};
	see the derivation of 
	\eqref{cT2}.
	Denote the triple integral in \eqref{int:RHS} by $I$. 
	The integral $I$ can be expressed as follows: 
	\begin{gather*}
		I = I_1 + I_2,\quad\text{where}\\
		I_1 := \int_0^t\d s\int_{\R^3}
			\int_{|y'-y|\leq \sqrt{h}} \d y' \d y\ \mathcal{D}^{(h)}_s(y)\mathcal{D}^{(h)}_s(y') f(y-y'); \\
		I_2 := \int_0^t\d s\int_{\R^3}
			\int_{|y'-y|\geq \sqrt{h}} \d y' \d y\ \mathcal{D}^{(h)}_s(y)\mathcal{D}^{(h)}_s(y') f(y-y').
	\end{gather*}
	Next, $I_1$ and $I_2$ are estimated separately, and in reverse order.	
	Theorem \ref{th:corr}  and Lemma \ref{lem:D1} together imply that
	\begin{equation}\label{est:I_2}\begin{split}
		I_2 &\le   
			\varphi(\sqrt{h})\int_0^t\d s\int_{\R^3}
			\int_{|y'-y|\geq \sqrt{h}} \d y' \d y\ \mathcal{D}^{(h)}_s(y)\mathcal{D}^{(h)}_s(y')\\
		&\lesssim h^{-1} \left[ \log (1/h)\right]^{-\alpha}
			\int_0^t \|\mathcal{D}^{(h)}_s\|^2_{L^1(\mathbb{R}^3)} \,\d s\\
		&\lesssim \left[ \log (1/h)\right]^{-\alpha}.
	\end{split}\end{equation}
	In order to estimate $I_1$, one can use the trivial inequality
	$\mathcal{D}_s^{(h)}(y')\le p_{s+h}(y')+p_s(y')$ in order to see that
	\begin{align*}
		I_1 &\le \int_0^t \d s \int_{\mathbb{R}^3} \d y \int_{|y'-y|\leq \sqrt{h}} \d y'\
			\mathcal{D}_s^{(h)}(y) \left(p_{s+h}(y')+p_s(y')\right) f(y-y') \\
		&\le \sum_{n=0}^{\infty} \varphi\left(2^{-n-1} \sqrt{h}\right)
			\int_0^t \d s \int_{\mathbb{R}^3} \d y 
			\int_{2^{-n-1} \sqrt{h} \leq |y-y'| \leq 2^{-n} \sqrt{h} }\d y'\ 
			\mathcal{D}_s^{(h)}(y) \left(p_{s+h}(y')+p_s(y')\right).
		\end{align*}
	Because $p_r(z) \lesssim  r^{-3/2}$  for all $z\in\R^3$ and $r>0$,
	and since $p_r$ is a probability density,
	one can estimate the $\d y'$-integral in the preceding display as follows:
	\begin{align*}
	\int_{2^{-n-1} \sqrt{h} \leq |y-y'| \leq 2^{-n} \sqrt{h}} (p_{s+h}(y') + p_{s}(y'))\d y'
		&\lesssim \left( \frac{(2^{-n-1} \sqrt{h})^3}{(s+h)^{3/2}} \wedge 1\right)
		+  \left( \frac{(2^{-n-1} \sqrt{h})^3}{s^{3/2}} \wedge 1\right)\\
		&\lesssim\left( \frac{(2^{-n-1} \sqrt{h})^3}{s^{3/2}} \wedge 1\right).
	\end{align*}
	Therefore,
	\begin{align*}
		I_1 &\lesssim \sum_{n=0}^{\infty} \varphi\left(2^{-n-1} \sqrt{h}\right) \int_0^t
			\left( \frac{(2^{-n-1} \sqrt{h})^3}{s^{3/2}} \wedge 1\right) 
			\, \d s   \\
		&\lesssim \sum_{n=0}^{\infty} \varphi(2^{-n-1} \sqrt{h}) 
			\left( \int_0^ {(2^{-n-1}\sqrt{h})^2} \d s + 
			\int_{(2^{-n-1}\sqrt{h})^2}^\infty  \frac{\left( 2^{-n-1} \sqrt{h}\right)^3}{s^{3/2}} \d s \right)\\
		&\lesssim h\sum_{n=0}^{\infty} \varphi\left( 2^{-n-1} \sqrt{h}\right)  2^{-2n}.
	\end{align*}
	Now apply Theorem \ref{th:corr} to see that
	\[
		I_1 \lesssim \sum_{n=0}^{\infty} \left[\log \left(\frac{2^{n+1}}{\sqrt{h}}\right)\right]^{-\alpha}
		\lesssim \sum_{n=0}^\infty\frac{1}{n^\alpha + [\log(1/h)]^\alpha}\\
		\lesssim \int_0^\infty \frac{\d q}{q^\alpha+[\log(1/h)]^\alpha}\\
		= B[\log(1/h)]^{1-\alpha},
	\]
	where $B:= \int_0^\infty [1+u^\alpha]^{-1}\,\d u<\infty$.
	The lemma follows from this and \eqref{est:I_2}.
\end{proof}



\section{The constant-coefficient case}\label{sec: additive noise}
So far, everything that was considered held for any $\alpha>1$.
From now on, we restrict the choice of the spatial correlation function $f$
further by assuming that $f$ comes from Theorem \ref{th:corr} in the
special case that
\begin{equation}\label{1:alpha:2} 
	1<\alpha<2.
\end{equation}
This assumption will be in place throughout the remainder of this paper, 
and used sometimes without mention.
	
In this section we study the [constant-coefficient] linearization of (SHE).
That is, we consider the stochastic partial differential equation,
\[
	\frac{\partial Z(t\,,x)}{\partial t} = \frac12 (\Delta Z)(t\,,x) + \eta(t\,,x),
\]
subject to $Z(0)\equiv 1$. As is well known,
the solution is the following centered Gaussian random field:
\[
	Z(t\,,x) := 1+ \int_{(0,t)\times\R^3} p_{t-s}(y-x)\,\eta(\d s\,\d y),
\]
as  the preceding Wiener integral has a finite variance. This can be seen
from an application of Lemma
\ref{lem:exist} with $\sigma\equiv 1$.

Recall the function $\varphi$ from \eqref{varphi}.
The elementary properties of Wiener integrals show us
that $Z$ is a centered Gaussian random field with covariance
\begin{align*}
	\Cov\left[ Z(t\,,x)\,,Z(t\,,x')\right] &= \int_0^t\d s\int_{\R^3}\d y\int_{\R^3}\d y'\
		p_s(x-y) p_s(x'-y')f(y-y')\\
	&= \int_0^t\d s\int_{\R^3}\d y\int_{\R^3}\d y'\
		p_s(x-y) p_s(x'-y')\varphi(\|y-y'\|),
\end{align*}
for all $t>0$ and $x,x'\in\R^3$.
In particular, it follows readily that $Z(t)$ is a
centered, stationary Gaussian random field --- indexed by $\R^3$ --- for every fixed $t>0$.

The following is the main result of this section.

\begin{proposition}\label{pr:max:Z}
	For every real number $t>0$ there exists a finite $K>1$ such that
	\[
		\P\left\{ \max_{{j\in \{1,\dots, N\}^3}}|Z(t\,,j/N)| \le
	 	\frac1K (\log N)^{1-(\alpha/2)}\right\}
		\le   \exp\left( - \frac{[\log N]^{2-\alpha}}{{K^2}(1+t)}\right)
		\qquad\text{for every $N\in\N$}.
	\]
\end{proposition}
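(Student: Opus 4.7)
The plan is to view $U(t\,,x):=Z(t\,,x)-1$ as a centered stationary Gaussian field on $\R^3$ and to combine Sudakov's minoration (to lower bound $\E\max_j U(t\,,j/N)$) with the Borell--TIS concentration inequality (to upgrade this to a high-probability bound).

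The first and hardest step is a matching lower bound on the variogram: for $\|x-x'\|$ sufficiently small (depending on $t$),
\[
	d^2(x\,,x'):=\E\!\left[(U(t\,,x)-U(t\,,x'))^2\right]\gtrsim [\log(1/\|x-x'\|)]^{1-\alpha}.
\]
By Plancherel,
\[
	d^2(x\,,x') = 2\int_{\R^3}\frac{1-\e^{-t\|z\|^2}}{\|z\|^2}\widehat{f}(z)\bigl(1-\cos(z\cdot(x-x'))\bigr)\,\frac{\d z}{(2\pi)^3}.
\]
Restricting the integration to $\|z\|\ge C/\|x-x'\|$ ensures $1-\e^{-t\|z\|^2}\asymp 1$ (once $\|x-x'\|\lesssim\sqrt{t}$), and the spherical average of $1-\cos(z\cdot(x-x'))$ is bounded below by a positive constant whenever $\|z\|\cdot\|x-x'\|\ge 2$. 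Combining these with the asymptotic $\widehat{f}(z)\asymp\|z\|^{-1}[\log\|z\|]^{-\alpha}$ from Theorem~\ref{th:corr}(5) reduces the computation to the radial integral $\int_{C/\|x-x'\|}^\infty (r[\log r]^\alpha)^{-1}\,\d r\asymp[\log(1/\|x-x'\|)]^{1-\alpha}$. Proposition~\ref{pr:u(x)-u(x')} supplies only the \emph{upper} bound on $d^2$, so this matching lower bound---which exploits the slow decay of $\widehat f$ at high frequency---must be derived from scratch and is the main technical obstacle.

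The second step is Sudakov's minoration. Distinct points $x,x'$ in the grid $G_N:=\{j/N:j\in\{1,\dots,N\}^3\}$ satisfy $\|x-x'\|\ge 1/N$, so the first step yields $d(x\,,x')\gtrsim(\log N)^{-(\alpha-1)/2}$ for $N$ large (with constant depending on $t$; pairs at Euclidean distance exceeding $\sqrt t$ are handled by the trivial lower bound $d(x\,,x')\gtrsim d_0(t)>0$, which dominates $(\log N)^{-(\alpha-1)/2}$ for large $N$). Sudakov's minoration for centered Gaussian processes then gives
\[
	\E\max_{j\in\{1,\dots,N\}^3} U(t\,,j/N)\gtrsim (\log N)^{-(\alpha-1)/2}\sqrt{\log(N^3)}\asymp(\log N)^{1-\alpha/2}.
\]

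Finally, since $U(t\,,\cdot)$ is stationary with $\Var[U(t\,,x)]\asymp 1+t$ (Remark~\ref{rem:Var[Z]}), the Borell--TIS inequality provides
\[
	\P\!\left\{\E\max_j U(t\,,j/N)-\max_j U(t\,,j/N)\ge u\right\}\le\exp\!\left(-\frac{u^2}{2\Var[U(t\,,0)]}\right).
\]
Because $\max_j|Z(t\,,j/N)|\ge 1+\max_j U(t\,,j/N)$, the event $\{\max_j|Z(t\,,j/N)|\le(\log N)^{1-\alpha/2}/K\}$ forces $\max_j U(t\,,j/N)\le(\log N)^{1-\alpha/2}/K-1$; for $K$ large enough relative to the Sudakov constant, this lies below $\E\max_j U(t\,,j/N)-c(\log N)^{1-\alpha/2}$ for $N$ large. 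Applying Borell--TIS with $u\asymp(\log N)^{1-\alpha/2}$ then delivers the stated bound $\exp(-(\log N)^{2-\alpha}/(K^2(1+t)))$, after possibly enlarging $K$ to absorb universal constants and to render the inequality trivial for small $N$.
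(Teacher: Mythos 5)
Your argument is correct, and its overall architecture coincides with the paper's: a lower bound on the expected maximum of the Gaussian field $Z(t,\cdot)$ over the grid, upgraded to a high-probability statement by the Borell--Sudakov--Tsirel'son concentration inequality with variance proxy $\Var[Z(t,x)]\asymp 1+t$. The genuine difference is in how the expected-maximum lower bound is produced. The paper first establishes the two-sided $L^2$-modulus bound as Proposition \ref{pr:Z:modulus} (so your ``main technical obstacle,'' the lower bound on the variogram, is not absent from the paper --- it is proved there by essentially the same Plancherel computation you sketch, restricting to frequencies $\|z\|>1/\|x-x'\|$ and invoking Lemma 4.8 of Foondun--Khoshnevisan for the averaged $1-\cos$ term), and then runs the full Dudley--Fernique two-sided entropy characterization for stationary Gaussian fields: it computes the metric entropy $\mathcal{N}_A(\varepsilon)$ of the grid in the canonical metric and evaluates the entropy integral to get $\E\max_j|Z(t,j/N)|\asymp(\log N)^{(2-\alpha)/2}$. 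You instead use only Sudakov's minoration, which requires nothing beyond the cardinality $N^3$ and the minimum pairwise separation $\gtrsim(\log N)^{-(\alpha-1)/2}$; this is lighter (no entropy computation, no appeal to stationarity for the Fernique lower bound) and suffices here because the entropy integral is in any case dominated by its smallest scale. What the paper's route buys in exchange is the matching \emph{upper} bound on the expected maximum, which is reused later (see \eqref{EZ-Z} in the proof of Theorem \ref{th:conditional}); your one-sided argument does not deliver that, but it is not needed for the present proposition. Your handling of the centering (passing from $\max_j|Z|$ to $1+\max_j U$ and applying the one-sided lower-tail form of Borell--TIS) and of small $N$ by enlarging $K$ is sound.
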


Let us make a few remarks before we prove Proposition \ref{pr:max:Z}.
First, we record the following ready corollary of Proposition \ref{pr:max:Z},
the stationarity of $x\mapsto Z(t\,,x)$, and the restriction
\eqref{1:alpha:2} on $\alpha$.

\begin{corollary}\label{cor:discont:Z:1}
	For every $t>0$,
	\[
		\P\left\{ \sup_{x\in \mathcal{B} \cap \Q^3} Z(t\,,x)=\infty
		\text{ for every open ball $\mathcal{B}\subset\R^3$}\right\}=1.
	\]
\end{corollary}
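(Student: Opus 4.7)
The plan is to promote Proposition~\ref{pr:max:Z} (which is tied to the unit cube $[0\,,1]^3$) to the claimed density statement by combining spatial stationarity with a Gaussian zero--one law.

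First I would apply Proposition~\ref{pr:max:Z} along the geometric subsequence $N_k:=2^k$. Because $1<\alpha<2$, the exponent $2-\alpha$ is strictly positive, so
\[
    \sum_{k=1}^\infty \exp\left(-\frac{(k\log 2)^{2-\alpha}}{K^2(1+t)}\right)<\infty,
\]
and the Borel--Cantelli lemma forces $\max_{j\in\{1,\dots,N_k\}^3}|Z(t\,,j/N_k)|>K^{-1}(\log N_k)^{1-\alpha/2}\to\infty$ almost surely. In particular $\sup_{x\in[0,1]^3\cap\Q^3}|Z(t\,,x)|=\infty$ a.s.

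Next, the zero--one law for suprema of centered separable Gaussian processes (a standard consequence of the Borell--TIS inequality) tells us that, for every open ball $\mathcal{B}\subset\R^3$, the event $A_\mathcal{B}:=\{\sup_{x\in\mathcal{B}\cap\Q^3}|Z(t\,,x)|=\infty\}$ has probability in $\{0\,,1\}$; by the spatial stationarity of $Z(t\,,\cdot)$, this probability $p_r$ depends only on $r:=\mathrm{radius}(\mathcal{B})$. Suppose for contradiction that $p_r=0$ for some $r>0$. Then stationarity gives $\P(A_{B(q,r)}^c)=1$ for every $q\in\Q^3$; intersecting over this countable family, almost surely $|Z(t\,,\cdot)|$ is bounded on $B(q\,,r)\cap\Q^3$ for every rational $q$. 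Covering the compact cube $[0\,,1]^3$ by finitely many such rational balls would then produce a single finite (random) bound on $|Z(t\,,\cdot)|$ over $[0\,,1]^3\cap\Q^3$, contradicting the previous paragraph. Hence $p_r=1$ for every $r>0$.

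Finally, intersecting the countable collection of full-probability events $A_{B(q,r)}$ over $(q\,,r)\in\Q^3\times(\Q\cap(0\,,\infty))$, and observing that every open ball contains a rational one, establishes the $|Z|$-version of the corollary. To upgrade $|Z|$ to $Z$ as stated, note that both $\sup_{\mathcal{B}\cap\Q^3} Z(t\,,\cdot)$ and $\sup_{\mathcal{B}\cap\Q^3}(-Z(t\,,\cdot))$ are suprema of centered Gaussian families and therefore each take a value in $\{<\infty\,,=\infty\}$ almost surely; the symmetry $Z\stackrel{d}{=}-Z$ equates their two probabilities, and the $|Z|$-result rules out the case that both are finite, so both must be infinite a.s. The principal technical content is of course packed into Proposition~\ref{pr:max:Z}; once that small-ball lower bound is in place, the rest of the argument is soft.
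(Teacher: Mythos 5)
Your argument is correct. The paper itself offers no written proof --- it simply declares the corollary a ``ready'' consequence of Proposition \ref{pr:max:Z}, stationarity, and \eqref{1:alpha:2} --- and your first paragraph is exactly the intended use of that proposition (you could even skip Borel--Cantelli: for fixed $M$, once $K^{-1}(\log N)^{1-\alpha/2}\ge M$ the displayed bound shows directly that $\P\{\sup_{x\in[0,1]^3\cap\Q^3}|Z(t\,,x)|\le M\}=0$). Where you genuinely depart from what the authors presumably had in mind is the localization to an arbitrary ball: their appeal to stationarity suggests rerunning the Dudley--Fernique entropy computation on a small sub-cube, which works because the divergent part of the entropy integral, $\int_{0}\varepsilon^{-1/(\alpha-1)}\,\d\varepsilon$, is insensitive to the side length; you instead combine the unit-cube statement with a compactness/covering argument and the Gaussian zero--one law for the event $\{\sup<\infty\}$. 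Your route is softer and arguably cleaner, since it reuses Proposition \ref{pr:max:Z} as a black box rather than its proof, and it has the additional virtue of explicitly handling the passage from $|Z|$ to $Z$ via symmetry, a point the paper silently elides. One cosmetic remark: $Z(t\,,x)=1+\int\cdots$ has mean one, not zero (despite the paper calling it centered), so the symmetry and the zero--one law should formally be applied to $Z-1$; the constant shift is of course irrelevant to whether the supremum is infinite.
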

Therefore, Fubini's theorem yields the following.

\begin{corollary}\label{cor:discont:Z:2}
	With probability one,
	\[
		\sup_{(t,x)\in \mathcal{C} \cap \Q^4} Z(t\,,x)=\infty
		\text{ for all open balls $\mathcal{C}\subset(0\,,\infty)\times\R^3$}.
	\]
\end{corollary}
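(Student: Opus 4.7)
The plan is to deduce Corollary~\ref{cor:discont:Z:2} from Corollary~\ref{cor:discont:Z:1} via a countable union of null sets on rational time slices, which is the ``Fubini'' content alluded to in the paper.

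First I would reduce to a countable collection of open balls. Every open ball $\mathcal{C}' \subset (0\,,\infty)\times\R^3$ contains some open ball $\mathcal{C}$ whose center lies in $\Q^4$ and whose radius is rational, so that $\sup_{(t,x)\in\mathcal{C}' \cap \Q^4}Z(t\,,x) \ge \sup_{(t,x)\in\mathcal{C} \cap \Q^4} Z(t\,,x)$. Hence it is enough to establish the conclusion simultaneously for the countable family $\mathscr{F}$ of all open balls with rational center and rational radius that are contained in $(0\,,\infty)\times\R^3$.

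Next, for each $\mathcal{C} \in \mathscr{F}$, the projection of $\mathcal{C}$ onto the time axis is a nonempty open interval and therefore contains a rational point $t_{\mathcal{C}} \in \Q \cap (0\,,\infty)$. The time-slice $\mathcal{C}_{t_{\mathcal{C}}} := \{x \in \R^3 :\, (t_{\mathcal{C}}\,,x) \in \mathcal{C}\}$ is then a nonempty open ball in $\R^3$. For each rational $t>0$, Corollary~\ref{cor:discont:Z:1} yields a $\P$-null event $N_t$ off which $\sup_{x \in \mathcal{B} \cap \Q^3} Z(t\,,x) = \infty$ for every open ball $\mathcal{B} \subset \R^3$; in particular, off $N_{t_{\mathcal{C}}}$ this applies to $\mathcal{B} = \mathcal{C}_{t_{\mathcal{C}}}$.

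Finally, set $N := \bigcup_{t \in \Q \cap (0\,,\infty)} N_t$, which is $\P$-null as a countable union of null events. For every $\omega \notin N$ and every open ball $\mathcal{C}' \subset (0\,,\infty)\times\R^3$, pick $\mathcal{C} \in \mathscr{F}$ with $\mathcal{C} \subset \mathcal{C}'$ as in the first step, and note that
\[
	\sup_{(t,x)\in \mathcal{C}' \cap \Q^4} Z(t\,,x)(\omega)
	\ge \sup_{x \in \mathcal{C}_{t_{\mathcal{C}}} \cap \Q^3} Z(t_{\mathcal{C}}\,,x)(\omega)
	= \infty.
\]
The only mild point requiring care is the preliminary reduction to rational balls with rational time-slices; the rest is immediate from Corollary~\ref{cor:discont:Z:1}, since one can always slice $\mathcal{C}$ at a rational time and invoke the fixed-$t$ statement there.
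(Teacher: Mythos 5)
Your argument is correct and is exactly the intended content of the paper's one-line justification (``Fubini's theorem yields the following''): apply the fixed-$t$ statement of Corollary~\ref{cor:discont:Z:1} at the countably many rational times, take a countable union of null sets, and reduce the uncountable family of balls to rational ones by monotonicity of the supremum. Nothing further is needed.
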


The proof of Proposition \ref{pr:max:Z} hinges on an $L^2(\P)$-modulus of
continuity of $x\mapsto Z(t\,,x)$.

\begin{proposition}\label{pr:Z:modulus}
	Uniformly for all $t\ge0$ and $x,x'\in\R^3$,
	\[
		\frac{1-\e^{-t/2}}{\left[
		\log_+(1/\|x-x'\|)\right]^{\alpha-1}}\lesssim
		\E\left(|Z(t\,,x)-Z(t\,,x')|^2\right) \lesssim\frac{\e^t}{\left[
		\log_+(1/\|x-x'\|)\right]^{\alpha-1}}.
	\]
\end{proposition}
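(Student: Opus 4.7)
The plan is to reduce both inequalities to a single Fourier-side computation and then exploit the explicit asymptotics of $\widehat f$ given in Theorem~\ref{th:corr}. Writing $w := x - x'$ and using the Wiener isometry together with Parseval's identity (noting that $p_s(\cdot - x) - p_s(\cdot - x')$ has Fourier transform $\e^{-s\|z\|^2/2}(\e^{ix\cdot z} - \e^{ix'\cdot z})$, whose squared modulus is $2\e^{-s\|z\|^2}(1 - \cos(z\cdot w))$), one obtains after performing the $\d s$-integral the representation
\[
	\E\left(|Z(t,x) - Z(t,x')|^2\right) = \frac{2}{(2\pi)^3}\int_{\R^3}\left(1 - \cos(z\cdot w)\right)\cdot\frac{1 - \e^{-t\|z\|^2}}{\|z\|^2}\,\widehat f(z)\,\d z.
\]
Both bounds reduce to estimating this integral, with $\widehat f$ governed by Theorem~\ref{th:corr}: $\widehat f(z) \asymp \|z\|^{-1}(\log\|z\|)^{-\alpha}$ for $\|z\| > 1/\e$, and $0 < \widehat f \leq 1$ everywhere.

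For the upper bound I would use the crude estimate $1 - \e^{-t\|z\|^2} \leq 1$ (the factor $\e^t$ in the statement is slack but convenient) and split the $z$-integral at $\|z\| = 1/\|w\|$. On $\{\|z\| \leq 1/\|w\|\}$, the inequality $1 - \cos(z\cdot w) \leq \tfrac12\|z\|^2\|w\|^2$ reduces the integrand to $\tfrac12\|w\|^2 \widehat f(z)$; integrating in spherical coordinates using the asymptotics of $\widehat f$ yields a contribution of order $[\log_+(1/\|w\|)]^{-\alpha}$. On $\{\|z\| > 1/\|w\|\}$, the bound $1 - \cos(z\cdot w) \leq 2$ combined with $\widehat f(z) \asymp \|z\|^{-1}(\log\|z\|)^{-\alpha}$ reduces matters to $\int_{1/\|w\|}^\infty \d r /[r(\log r)^\alpha] \propto [\log_+(1/\|w\|)]^{1-\alpha}$, which dominates and delivers the claimed bound.

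For the lower bound I would exploit the radial symmetry of $\widehat f$. Writing $\widehat f(z) = F(\|z\|)$ and using the classical spherical-average identity $\int_{S^2} \cos(z\cdot w)\,\d S(z) = 4\pi\sin(r\|w\|)/(r\|w\|)$ when $\|z\| = r$, the Fourier representation rewrites as a one-dimensional integral in $r$; restricting to $\{r \geq \pi/(2\|w\|)\}$ forces $|\sin(r\|w\|)/(r\|w\|)| \leq 2/\pi$, so the angular factor $1 - \sin(r\|w\|)/(r\|w\|)$ stays bounded below by $1 - 2/\pi > 0$. For $\|w\|$ sufficiently small the lower endpoint $\pi/(2\|w\|)$ exceeds $1/\sqrt{2}$, so $1 - \e^{-tr^2} \geq 1 - \e^{-t/2}$ throughout the range; combined with $F(r) \gtrsim r^{-1}(\log r)^{-\alpha}$ from Theorem~\ref{th:corr} and the elementary computation $\int_{\pi/(2\|w\|)}^\infty \d r/[r(\log r)^\alpha] \asymp [\log_+(1/\|w\|)]^{1-\alpha}$, this produces the stated lower bound. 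The case of $\|w\|$ not small is handled directly via $\Var[Z(t,\cdot)] \asymp 1 + t$, as noted in Remark~\ref{rem:Var[Z]}.

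The main obstacle is controlling the oscillatory cancellation in $1 - \cos(z\cdot w)$ without losing a logarithm: a naive bound would land on $(\log)^{-\alpha}$ on both sides instead of the required $(\log)^{1-\alpha}$. The exact spherical-average identity is what saves the lower bound, because it shows that in $\R^3$ the $\cos$-cancellation is not pathological: $1 - \cos(z\cdot w)$ really does behave like a positive constant on the tail $\|z\|\|w\| \geq \pi/2$, and this is precisely what makes the log exponent $\alpha - 1$ match in the upper and lower bounds.
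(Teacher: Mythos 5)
Your argument is correct and shares the same Fourier-analytic skeleton as the paper's proof, but it is more self-contained at the two places where the paper delegates to external lemmas. The paper first sandwiches $\E(|Z(t,x)-Z(t,x')|^2)$ between $(1-\e^{-t/2})\mathcal{T}$ and $\e^{t/2}\mathcal{T}$, where $\mathcal{T}$ is the resolvent-type integral with kernel $2(1+\|z\|^2)^{-1}$, by citing Lemma 4.1 of Foondun--Khoshnevisan \cite{FK2}; you instead keep the exact time-integrated kernel $(1-\e^{-t\|z\|^2})/\|z\|^2$ and extract the factor $1-\e^{-t/2}$ yourself from $1-\e^{-tr^2}\ge 1-\e^{-t/2}$ on $r\ge 1/\sqrt2$. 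For the lower bound, the paper cites Lemma 4.8 of \cite{FK2} to assert that the tail integral of $(1-\cos(z\cdot w))\widehat f(z)/(1+\|z\|^2)$ over $\|z\|>1/\|w\|$ dominates the same integral without the cosine factor; you prove exactly this cancellation control directly via the spherical-average identity for $\cos(z\cdot w)$ in $\R^3$ (the normalized average over the sphere of radius $r$ is $\sin(r\|w\|)/(r\|w\|)$, bounded by $2/\pi$ once $r\|w\|\ge\pi/2$), which is legitimate because the full integrand is pointwise nonnegative and the remaining factors are radial. The resulting one-dimensional integrals $\int_{c/\|w\|}^\infty \d r/[r(\log r)^\alpha]$ and the splitting at $\|z\|=1/\|w\|$ for the upper bound coincide with the paper's computations \eqref{G1}--\eqref{G3}. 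Your upper bound is in fact uniform in $t$, which is stronger than the stated $\e^t$.

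One soft spot: for $\|x-x'\|$ not small you propose to fall back on $\Var[Z(t,\cdot)]\asymp 1+t$ from Remark \ref{rem:Var[Z]}, but knowledge of the individual variances alone gives no lower bound on the variance of the increment (the two values could in principle be highly correlated). This is easily repaired without the remark: your own spherical argument covers all $\|w\|$ if you simply restrict the radial integral to $r\ge\max\bigl(\pi/(2\|w\|),1/\sqrt2\bigr)$, on which range both $r\|w\|\ge\pi/2$ and $1-\e^{-tr^2}\ge1-\e^{-t/2}$ hold, and the resulting integral of $F$ is bounded below by a positive constant, matching $[\log_+(1/\|w\|)]^{1-\alpha}=1$ in that regime.
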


Proposition \ref{pr:Z:modulus} implies that $x\mapsto Z(t\,,x)$
is continuous in $L^2(\P)$, and hence in $L^p(\P)$ since the $L^p(\P)$
norm of a Gaussian random variable is controlled by its $L^2(\P)$ norm.
In particular, Doob's regularity theory implies that $x\mapsto Z(t\,,x)$
has a separable, in fact, Lebesgue measurable, version; 
see Chapter 5 of Khoshnevisan \cite{MPP}. After we establish
Proposition \ref{pr:Z:modulus} we always tacitly
refer to that separable version.

\begin{proof}[Proof of Proposition \ref{pr:Z:modulus}]
	By Parseval's identity,
	\begin{align*}
		\int_{\R^3}\d y\int_{\R^3}\d y'\ p_s(x-y) p_s(x'-y')f(y-y') &=
			\int_{\R^3}\d y\int_{\R^3}\d y'\ p_s(x-x'-y) p_s(y')f(y-y')\\
		&= \frac{1}{(2\pi)^3}\int_{\R^3} \exp\left( iz\cdot(x-x') - {s\|z\|^2} \right)
			\widehat{f}(z)\,\d z\\
		&=\frac{1}{(2\pi)^3}\int_{\R^3} \cos[z\cdot(x-x')] \e^{{-s\|z\|^2}}
			\widehat{f}(z)\,\d z.
	\end{align*}
	Therefore, an appeal to Lemma 4.1 of Foondun and Khoshnevisan \cite{FK2} yields
	\begin{equation}\label{TET}
		(1-\e^{-t/2}) \mathcal{T}
		\le \E\left(| Z(t\,,x)-Z(t\,,x')|^2\right)
		\le\e^{t/2}\mathcal{T},
	\end{equation}
	where
	\begin{equation}\label{T}
		\mathcal{T} := \frac{2}{(2\pi)^3}\int_{\R^3}\frac{1-\cos[z\cdot(x-x')]}{1+\|z\|^2}
		\widehat{f}(z)\,\d z.
	\end{equation}
	Thanks to Theorem \ref{th:corr},
	\begin{align*}
		\mathcal{T} &\asymp\int_{\R^3}\frac{1-\cos[z\cdot(x-x')]}{1+\|z\|^2}\,
			\frac{\d z}{1+\|z\| (\log_+\|z\|)^{\alpha}}\\
		&\asymp\int_{\R^3}\frac{1-\cos[z\cdot(x-x')]}{1+
			\|z\|^3 (\log_+\|z\|)^{\alpha}}\,\d z.
	\end{align*}
	Since $1-\cos\theta\le \min(1\,, \theta^2)$ for all $\theta\in\R$,
	\begin{equation}\label{G0}\begin{split}
		\mathcal{T} &\lesssim \int_{\R^3}
			\frac{\min\left( 1\,, \|z\|^2\|x-x'\|^2\right)}{1+
			\|z\|^3 (\log_+\|z\|)^{\alpha}}\,\d z\\
		&\lesssim \|x-x'\|^2 + \int_{\e\le \|z\|}G(\|z\|)\,\d z,
	\end{split}\end{equation}
	where
	\[
		G(r) := \frac{\min(1\,,{r^2}\|x-x'\|^2)}{r^3
		(\log r)^{\alpha}}\qquad\text{for all $r>\e$}.
	\]
	Integrate in spherical coordinates to find that
	\begin{equation}\label{G1}\begin{split}
		\int_{\|z\|>1/\|x-x'\|}G(\|z\|)\,\d z
			&\asymp \int_{1/\|x-x'\|}^\infty\frac{\d r}{%
			r (\log r)^{\alpha}}\\
		&=\int_{\log(1/\|x-x'\|)}^\infty\frac{\d s}{s^\alpha}\\
		&\propto \left[\log(1/\|x-x'\|)\right]^{1-\alpha}.
	\end{split}\end{equation}
	Similar computations yield the following:
	\begin{equation}\label{G2}\begin{split}
		\int_{\e\le\|z\|\le 1/\sqrt{\|x-x'\|}}G(\|z\|)\,\d z
			&\propto\|x-x'\|^2\int_{\e}^{1/\sqrt{\|x-x'\|}}
			\frac{r\,\d r}{(\log r)^{\alpha}}\\
		&\lesssim\|x-x'\|^2 \int_{\e}^{1/\sqrt{\|x-x'\|}}r\,\d r\\
		&\lesssim\|x-x'\|;
	\end{split}\end{equation}
	and
	\begin{equation}\label{G3}\begin{split}
		\int_{1/\sqrt{\|x-x'\|}\le\|z\|\le1/\|x-x'\|}G(\|z\|)\,\d z
			&\propto\|x-x'\|^2\int_{1/\sqrt{\|x-x'\|}}^{1/\|x-x'\|}
			\frac{r\,\d r}{(\log r)^{\alpha}}\\
		&\lesssim \|x-x'\|^2 \left[\log(1/\|x-x'\|)\right]^{1-\alpha}
			\int_{1/\sqrt{\|x-x'\|}}^{1/\|x-x'\|}r\,\d r\\
		&\lesssim\left[\log(1/\|x-x'\|)\right]^{1-\alpha}.
	\end{split}\end{equation}
	Therefore, we can combine \eqref{G1}, \eqref{G2}, and \eqref{G3},
	and plug the end result into \eqref{G0} to see that
	\begin{equation}\label{TUB}
		\mathcal{T} \lesssim \left[\log(1/\|x-x'\|)\right]^{1-\alpha}.
	\end{equation}
	This and \eqref{TET} together imply the upper bound for
	$\E(|Z(t\,,x)-Z(t\,,x')|^2)$.

	For the corresponding lower bound, we once again use \eqref{TET}
	and \eqref{T}. In this way we can show that
	\begin{align*}
		\mathcal{T} &\propto\int_{\R^3}
			\left(1-\cos[z\cdot(x-x')]\right)\frac{\widehat{f}(z)}{1+\|z\|^2}
			\,\d z\\
		&\ge\int_{\substack{z\in\R^3:\\\|z\|>1/\|x-x'\|}}
			\left(1-\cos[z\cdot(x-x')]\right)\frac{\widehat{f}(z)}{1+\|z\|^2}
			\,\d z\\
		&\gtrsim\int_{\substack{z\in\R^3:\\\|z\|>1/\|x-x'\|}}
			\frac{\widehat{f}(z)}{1+\|z\|^2}\,\d z;
	\end{align*}
	see Lemma 4.8 of Foondun and Khoshnevisan \cite{FK2} for an explanation of
	the last line. For $|x-x'|\leq \e^{-1}$ we apply Theorem \ref{th:corr} in order to deduce the following:
	\[
		\mathcal{T} \gtrsim
		\int_{1/\|x-x'\|}^\infty
		\frac{r\,\d r}{\left(1+r^2\right) ( \log(1/r) )^{\alpha}}
		\gtrsim\int_{1/\|x-x'\|}^\infty\frac{\d r}{r (\log(1/r))^{\alpha}}
		\propto \frac{1}{\left[\log(1/\|x-x'\|)\right]^{\alpha-1}}\,,
	\]
	and for the case $|x-x'| > \e^{-1}$, we merely write 
	\begin{equation*}
	 	\int_{\substack{z\in\R^3:\\\|z\|>1/\|x-x'\|}}
		\frac{\widehat{f}(z)}{1+\|z\|^2}\,\d z\ge
	 	\int_{\substack{z\in\R^3:\\\|z\|>\e}}
		\frac{\widehat{f}(z)}{1+\|z\|^2}\,\d z\,.
	\end{equation*}
 	Because of \eqref{TUB}, the preceding and \eqref{TET} together complete the task.
\end{proof}

Now we verify Proposition \ref{pr:max:Z}.

\begin{proof}[Proof of Proposition \ref{pr:max:Z}]
	Recall that $t>0$ is fixed, and define for every $x,x'\in\R^3$,
	\[
		d(x\,,x') := \sqrt{\E\left( \left|Z(t\,,x)-Z(t\,,x')\right|^2\right)}.
	\]
	Then, $d$ is the canonical metric that the Gaussian random field
	$x\mapsto Z(t\,,x)$ imposes on $\R^3$. And, in accord with Proposition
	\ref{pr:Z:modulus},
	\begin{equation}\label{d:Z}
		d(x\,,x') \asymp\left[\log_+\left(\frac{1}{\|x-x'\|}\right)\right]^{-(\alpha-1)/2},
	\end{equation}
	uniformly for all $x,x'\in\R^3$.

	For every $d$-compact set $A\subset\R^3$,
	let $\mathcal{N}_A(\,\cdot)$ denote the metric entropy of
	$A$ in the metric $d$; that is, for every $\varepsilon>0$,
	the quantity $\mathcal{N}_A(\varepsilon)$ denotes the minimum number
	of $d$-balls of radius $\varepsilon>0$ that are required to cover $A$.
	We have noted already that $x\mapsto Z(t\,,x)$ is a centered and stationary
	Gaussian process. Therefore, the theory of Dudley \cite{Dudley} and Fernique \cite{Fernique}
	[for a pedagogic account see Marcus and Rosen \cite{MR}] together
	imply that
	\begin{equation}\label{DF}
		\E\left( \sup_{x\in A}|Z(t\,,x)|\right)
		\asymp
		\int_0^{\text{\rm diam}(A)}\sqrt{\log_+\mathcal{N}_A(\varepsilon)}\,\d\varepsilon,
	\end{equation}
	uniformly for every compact set $A\subset\R^3$, where
	$\text{diam}(A):= \max_{a,b\in A}d(a\,,b)$ denotes the $d$-diameter of $A$
	and
	\[
		\E\left(\sup_{x\in A}|Z(t\,,x)|\right) :=
		\sup_{\substack{F\subset A:\\
		F\text{ is finite}}}\E\left(\max_{x\in F}|Z(t\,,x)|\right),
	\]
	which makes sense, thanks to separability.

	It is a noteworthy observation
	that the topology induced by the metric $d$ is Euclidean,
	thanks to \eqref{d:Z}. Therefore, \eqref{DF} holds for every compact set $A\subset\R^3$.

	We wish to apply the preceding to the finite set
	\[
		A := \left\{ \left(i_1/N\,,i_2/N\,,i_3/N\right):\
		0\le i_1,i_2,i_3\le N\right\}.
	\]
	According to \eqref{d:Z}, $\text{diam}(A)\asymp 1$ uniformly for all integers $N\ge1$.
	Moreover, for all $i,j\in\{0\,,\ldots,N\}^3$ and $N\ge 1$,
	\[
		d\left(i/N ~,~ j/N\right)
		\asymp\left[\log_+\left(\frac{N}{\|i-j\|}\right)\right]^{-(\alpha-1)/2}.
	\]
	In particular, there exists a finite constant $K>1$ such that for all $\varepsilon>0$:
	\begin{itemize}
		\item  If $d(i/N~,~j/N)\le\varepsilon$ then
			$\|i-j\|\le N\exp[-(K\varepsilon)^{-2/(\alpha-1)}]$; and
		\item If $N\exp[-(\varepsilon/K)^{-2/(\alpha-1)}] \le\|i-j\|$
			then $d(i/N~,~j/N)\le\varepsilon$.
	\end{itemize}
	Let $\delta(N)$ denote the smallest $\varepsilon>0$ such that
	$\varepsilon\ge d(i/N~,~j/N)>0$ for two distinct points $i, j\in NA$.
	The preceding remarks together imply that
	\[
		\delta(N) := \left[\log N\right]^{-(\alpha-1)/2}\qquad\text{for all $N\ge 1$},
	\]
	and
	\[
		\sqrt{\log\mathcal{N}_A(\varepsilon)}\asymp 
		\left\{\begin{array}{ll} 
			\varepsilon^{-1/(\alpha-1)}& \text{for all $\varepsilon\in
				\left(\delta(N)\,,\text{diam}(A)\right)$\,,} \\[1mm]
			(\log N)^{1/2} & 0 < \varepsilon \leq \delta(N).
		\end{array}\right. \qquad\]
	Now apply \eqref{DF} to see that
	\begin{equation}\label{E:max:Z}
		\E\left[\max_{1\le j\le N} |Z(t\,,j/N)|\right]
		\asymp \int_0^{\text{\rm diam}(A)}
		\frac{\d\varepsilon}{\varepsilon^{1/(\alpha-1)}}
		\asymp \left(\log N\right)^{(2-\alpha)/2},
	\end{equation}
	uniformly for all $N\ge1$. Finally, we apply Borell's inequality
	in order to see that for all $z>0$,
	\[
		\P\left\{ \left| \max_{x\in A}|Z(t\,,x)| - \E\left[ \max_{x\in A}|Z(t\,,x)|\right]
		\right| >  z \right\} \le 2\exp\left( - \frac{z^2}{2\sup_{x\in\R^3}\Var[Z(t\,,x)]}\right);
	\]
	see Borell \cite{Borell} and Sudakov and Tsirel'son \cite{ST}.
	This, \eqref{E:max:Z},
	and Remark \ref{rem:Var[Z]} together yield the proposition, after we make a judicious choice
	of $z$.
\end{proof}

\section{Local linearization}

For every space-time function $\phi:\R_+\times\R^3\to\R$,
and for every $\bm\varepsilon\in(0\,,\infty)^3$, define 
\[
	(\nabla_{\bm\varepsilon} \phi)(t\,,x) := \phi(t\,,x+\bm\varepsilon) - \phi(t\,,x).
\]
In other words, $\nabla_{\bm\varepsilon}$ is a sort
of discrete spatial gradient operator on a mesh of size
$\|\bm\varepsilon\|$. In particular, note that
\[
	(\nabla_{\bm\varepsilon} p)(t\,,x) = p_t(x+\bm\varepsilon)-p_t(x),
\]
for all $\bm\varepsilon\in(0\,,\infty)^3$ and $x\in\R^3$, where $p_t(\cdot)$ is the heat kernel,
as was defined in \eqref{p}.

We may also observe that $\nabla_{\bm\varepsilon}\phi$ makes sense equally well
when $\phi$ depends only on a spatial variable. In other words, whenever
$x\mapsto\phi(x)$ is a function on $\R^3$, 
\[
	(\nabla_{\bm\varepsilon}\phi)(x) = \phi(x+\bm\varepsilon)-\phi(x),
\]
for all $\bm\varepsilon,x\in\R^3$. 

In the next section we show that, under some additional assumptions on $\sigma$, 
the solution to \eqref{SHE} can be discontinuous at any given space-time point. 
The idea is that, in a strong sense,
\begin{equation}\label{approx}
	(\nabla_{\bm\varepsilon} u)(t\,,x) \approx \sigma(u(t\,,x)) (\nabla_{\bm\varepsilon} Z)(t\,,x)
	\quad\text{whenever $\bm\varepsilon\approx0$},
\end{equation}
for all $t>0$ and $x\in \R^3$. Hence, the gloal discontinuity of $Z(t\,,x)$
--- see Corollaries \ref{cor:discont:Z:1} and \ref{cor:discont:Z:2} ---
will force the local discontinuity of $u(t\,,x)$, as long as $\sigma(u(t\,,x))$
is not too small.
As part of this work, it will be shown that the error in the approximation 
\eqref{approx} to $\nabla_{\bm\varepsilon} u$ does not affect the discontinuity of the
term $\sigma(u)\times \nabla_{\bm\varepsilon} Z$. 
The following result makes this assertion more precise. 

\begin{proposition}\label{pr:localization}
	Assume that $\sigma$ is bounded. For any $T>0$,  there exist positive and finite constants $A_T$ and $\varepsilon_*<1$ such that  
	\[
		\E\left(\left| (\nabla_{\bm\varepsilon} u) (t\,,x) - \sigma(u(t\,,x))
		(\nabla_{\bm\varepsilon} Z) (t\,,x) \right|^k\right) \le
		\frac{{(A_T \sigma_0\sqrt{k})^k
		}}{%
		[\log(1/\|\bm\varepsilon\|)]^{3k(\alpha-1)/4}},
	\] 
	uniformly for all $(t\,,x\,,k)\in[0,T]\times\R^3\times[2\,,\infty)$
	and $\bm\varepsilon\in(0\,,\infty)^3$ that satisfy
	$\|\bm\varepsilon\|<\varepsilon_*$, where $\sigma_0$ is the bound for $\sigma$, i.e., $|\sigma(z)|\le \sigma_0$ for all $z\in \R$. 
\end{proposition}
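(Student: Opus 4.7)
My plan is to express
\[
M := (\nabla_{\bm\varepsilon}u)(t\,,x) - \sigma(u(t\,,x))(\nabla_{\bm\varepsilon}Z)(t\,,x)
\]
as a single stochastic integral and apply the Burkholder--Davis--Gundy (BDG) inequality. Subtracting the mild formulations of $u$ and $Z$ yields
\[
M = \int_{(0,t)\times\R^3}\mathcal{P}_{t-s}(y)\bigl[\sigma(u(s\,,y)) - \sigma(u(t\,,x))\bigr]\,\eta(\d s\,\d y),
\]
where $\mathcal{P}_r(y) := p_r(y-x-\bm\varepsilon) - p_r(y-x)$. BDG combined with the Cauchy--Schwarz inequality applied to the $\|\cdot\|_{k/2}$-norm of the quadratic variation then gives
\[
\|M\|_k^2 \lesssim k\int_0^t\!\d s\iint \lvert\mathcal{P}_{t-s}(y)\mathcal{P}_{t-s}(y')\rvert\,f(y-y')\,\mathcal{S}(s\,,y)\,\mathcal{S}(s\,,y')\,\d y\,\d y',
\]
where $\mathcal{S}(s\,,y) := \|\sigma(u(s\,,y))-\sigma(u(t\,,x))\|_k$. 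Crucially, this bound is strictly stronger than the trivial triangle-inequality bound $\|M\|_k \le \|\nabla_{\bm\varepsilon}u\|_k + \sigma_0\|\nabla_{\bm\varepsilon}Z\|_k \lesssim \sqrt{k}\sigma_0[\log(1/\|\bm\varepsilon\|)]^{-(\alpha-1)/2}$, so the improvement from exponent $(\alpha-1)/2$ to $3(\alpha-1)/4$ must come from a local cancellation captured by the vanishing of $\mathcal{S}(s\,,y)$ as $(s\,,y)\to(t\,,x)$.

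I would exploit two complementary estimates on $\mathcal{S}$: boundedness of $\sigma$ supplies $\mathcal{S}(s\,,y)\le 2\sigma_0$, while Lipschitz continuity of $\sigma$ combined with Propositions~\ref{pr:u(x)-u(x')} and~\ref{pr:u(t)-u(t')} yields
\[
\mathcal{S}(s\,,y) \lesssim L\sqrt{k}\bigl[\log_+(1/d(s\,,y))\bigr]^{-(\alpha-1)/2},\qquad d(s\,,y) := \|y-x\|\vee|t-s|^{1/2}.
\]
The heart of the argument is a cutoff at an intermediate scale $\delta\in(\|\bm\varepsilon\|,1)$. On the inner region $\{d(s\,,y),d(s\,,y')<\delta\}$ I would use the Lipschitz bound; combined with the kernel estimate $\int_0^t\!\d s\iint|\mathcal{P}\mathcal{P}'|f \lesssim [\log(1/\|\bm\varepsilon\|)]^{1-\alpha}$ from the proof of Proposition~\ref{pr:u(x)-u(x')}, this controls the inner contribution in terms of $[\log(1/\delta)]^{-(\alpha-1)}[\log(1/\|\bm\varepsilon\|)]^{1-\alpha}$. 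On the outer region I would use the boundedness bound $\mathcal{S}\le 2\sigma_0$ together with the Gaussian tail of the heat kernel and the $L^1$-estimate~\eqref{L1} to show that the outer kernel mass decays super-polynomially in $\delta$ and thus contributes negligibly once $\delta\gg\|\bm\varepsilon\|$. Choosing $\log(1/\delta)$ so that the two inner log-exponents match and the $k$-dependence absorbs into the Lipschitz pre-factor produces the advertised $3(\alpha-1)/4$ in the exponent and the $\sqrt{k}\sigma_0$ pre-factor after a square root.

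The main obstacle will be the precise optimization of $\delta$ uniformly in $k\ge 2$ and in $t\in[0\,,T]$, which requires tracking how the $k$-dependence of the Lipschitz-plus-increment bound combines with the slow logarithmic decay of $f$ and the Gaussian tail of $p_r$ in the outer region. The constants $A_T$ and $\varepsilon_*<1$ in the statement emerge naturally from the $T$-dependent constants in Propositions~\ref{pr:u(x)-u(x')}--\ref{pr:u(t)-u(t')} and from the requirement that the chosen $\delta$ lie in $(\|\bm\varepsilon\|,1)$; once the outer-region decay is in place, the analysis mirrors the corresponding Case~1/Case~2 split in the proof of Proposition~\ref{pr:u(x)-u(x')}.
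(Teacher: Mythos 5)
Your central step --- rewriting the error term as the single stochastic integral $\int_{(0,t)\times\R^3}\mathcal{P}_{t-s}(y)\,[\sigma(u(s,y))-\sigma(u(t,x))]\,\eta(\d s\,\d y)$ --- is not legitimate: $\sigma(u(t,x))$ is $\mathcal{F}_t$-measurable but not $\mathcal{F}_s$-measurable for $s<t$, so this integrand is not predictable and the Walsh--Dalang integral on the right-hand side is undefined. In particular the identity $X\int\phi\,\d\eta=\int X\phi\,\d\eta$ fails for an $\mathcal{F}_t$-measurable $X$ (any anticipating interpretation would introduce Malliavin correction terms), and the BDG bound you invoke does not apply. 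Since everything downstream rests on this representation, the proof as written does not go through. The paper circumvents exactly this obstruction with a four-term decomposition $I_{11}-I_{12}+I_{21}-I_{22}$ relative to a small space-time box $B(x,t,\varepsilon)$ of temporal height $\beta_\varepsilon=\exp(-\sqrt{\log(1/\varepsilon)})$ and spatial half-width $\gamma_\varepsilon\asymp\beta_\varepsilon^{1/4}$: inside the box the stochastic integrand is centered at the \emph{predictable} anchor $\sigma(u((t-\beta_\varepsilon)_+,x))$ rather than at $\sigma(u(t,x))$, while the terms carrying $\sigma(u(t,x))$ (and the anchor mismatch) are kept \emph{outside} the stochastic integral and handled by Cauchy--Schwarz applied to the product of an $\mathcal{F}_t$-measurable factor with a stochastic integral over the box's complement. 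Your inner/outer cutoff and your use of Propositions \ref{pr:u(x)-u(x')} and \ref{pr:u(t)-u(t')} on the inner region are the right ingredients, but they must be organized around a predictable anchor for the argument to be valid.

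A secondary quantitative point: your claim that the outer kernel mass ``decays super-polynomially in $\delta$'' holds only in the spatial direction, via the Gaussian tail of $p_r$. In the temporal direction the far contribution is of order $\varepsilon^2\varphi(\varepsilon)\log(t/\beta_\varepsilon)\asymp[\log(1/\varepsilon)]^{-\alpha}\log(t/\beta_\varepsilon)$, which with $\log(1/\beta_\varepsilon)=\sqrt{\log(1/\varepsilon)}$ gains only a factor $[\log(1/\varepsilon)]^{-1/2}$ over the total mass $[\log(1/\varepsilon)]^{1-\alpha}$; the resulting exponent $\tfrac12-\alpha$ is at most $\tfrac32(1-\alpha)$ precisely because $\alpha<2$. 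So the choice of the intermediate scale is a genuine balance between the temporal-outer loss $\log(t/\beta_\varepsilon)$ and the inner gain $[\log(1/\beta_\varepsilon)]^{-(\alpha-1)}$, not a matter of making the outer region negligible outright; your optimization heuristic should be revised accordingly.
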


The proof is somewhat long, and will be presented shortly. But first, let us 
make a few remarks on the content of Proposition \ref{pr:localization}.

According to Propositions \ref{pr:u(x)-u(x')}, for every $k\ge 2$ and $T>0$,
\[
	\left\| (\nabla_{\bm\varepsilon} u)(t\,,x)\right\|_k \lesssim
	[\log(1/\|\bm\varepsilon\|)]^{-(\alpha-1)/2},
\]
uniformly for all $(t\,,x)\in[0\,,T]\times\R^3$ and $\|\bm\varepsilon\|>0$ sufficiently
small.
This very inequality can be applied with $k$ replaced by $2k$ and
$\sigma$ by the constant function $1$ in order to yield the following: For all $k\ge 2$ and $T>0$,
\[
	\left\| (\nabla_{\bm\varepsilon} Z)(t\,,x)\right\|_{2k} \lesssim
	[\log(1/\|\bm\varepsilon\|)]^{-(\alpha-1)/2},
\]
uniformly for all $(t\,,x)\in[0\,,T]\times\R^3$  and $\bm{\varepsilon}\in\R^3\setminus\{0\}$
such that $\|\bm\varepsilon\|$ is sufficiently small.
Theorem \ref{th:LyapunovExp}
and the Lipschitz continuity of $\sigma$ together imply that $\|\sigma(u(t\,,x))\|_{2k}$
is bounded, for every $k\ge 2$ and $T>0$, uniformly over all
$(t\,,x)\in[0\,,T]\times\R^3$. One can conclude from this discussion,
and the Cauchy--Schwarz inequality, that for
all $k\ge2$ and $T>0$,
\[
	\max\left\{ \left\| (\nabla_{\bm\varepsilon} u)(t\,,x)\right\|_k \,,
	\left\| \sigma(u(t\,,x))(\nabla_{\bm\varepsilon} Z)(t\,,x)\right\|_k \right\}\lesssim
	[\log(1/\|\bm\varepsilon\|)]^{-(\alpha-1)/2},
\]
uniformly for all $(t\,,x)\in[0\,,T]\times\R^3$  and $\|\bm\varepsilon\|>0$ sufficiently
small. If this bound were proved to 
be sharp [it can be, in some cases], then Proposition \ref{pr:localization}
is telling us that, although 
$\mathcal{A}_1 := (\nabla_{\bm\varepsilon} u)(t\,,x)$ and
$\mathcal{A}_2 := \sigma(u(t\,,x))(\nabla_{\bm\varepsilon} Z)(t\,,x)$ 
are both quite small in $L^k(\P)$ norm, their difference $\mathcal{A}_1-\mathcal{A}_2$ 
is smaller still. This is a quantitative way to say that 
the locally-linearized form $\mathcal{A}_2$ is a very good approximation
to the discrete gradient $\mathcal{A}_1$ of the solution to \eqref{SHE}.
This general idea has recently played various roles in SPDEs;
see, for example Hairer \cite{Hairer,Hairer1} and Hairer and Pardoux \cite{HP}, where this sort of 
local linearization is sometimes referred
to as a ``jet expansion,'' and Foondun, Khoshnevisan, and Mahboubi \cite{FKM}
and Khoshnevisan, Swanson, Xiao, and Zhang \cite{KSXZ}, where this sort of
local linearization is used to analyse the local structure of the solution
to parabolic SPDEs that are much nicer than those that appear here.

Let us conclude this section with the following.

\begin{proof}[Proof of Proposition \ref{pr:localization}]
	Let us first introduce some notation.
	
	For every $\varepsilon>0$ let
	\begin{equation}\label{beta:gamma}
		\beta_\varepsilon:= \exp\left(-\sqrt{\log(1/\varepsilon)}\right)
		\quad\text{and}\quad
		\gamma_\varepsilon := (16\beta_\varepsilon)^{1/4} =
		2\exp\left( -\tfrac14\sqrt{\log(1/\varepsilon)}\right).
	\end{equation}
	As notational advice, let us point out that here and throughout,
	$\varepsilon>0$ denotes a typically-small scalar and should not be confused
	with $\bm{\varepsilon}\in(0\,,\infty)^3$ which is a 3-vector that typically
	has small norm.
	
	For all $t,\varepsilon>0$ and $x\in\R^3$ define 
	\begin{equation}\label{B}
		B(x\,,t\,,\varepsilon) = \left[ \left(t-\beta_\varepsilon\right)_+ \,, t\right] 
		\times \prod_{i=1}^3
		[x_i-\gamma_\varepsilon\,, x_i+ \gamma_\varepsilon]
	\end{equation} 
	to be a suitably-chosen, 4-dimensional, space-time box with ``center'' $(t\,,x)$.

	From now on we choose and fix a real number $\Xi>1$ and consider
	an arbitrary $\bm{\varepsilon}\in(0\,,\infty)^3$ that satisfies
	\[
		\Xi^{-1}\varepsilon\le \|\bm{\varepsilon}\|\le \Xi\varepsilon.
	\]
	
	Let us consider the following decomposition of $\nabla_{\bm\varepsilon} u$, valid
	thanks to \eqref{mild}
	\[
		(\nabla_{\bm\varepsilon} u)(t\,,x)-\sigma(u(t\,,x))(\nabla_{\bm\varepsilon} Z)(t\,,x) 
		= I_{11} - I_{12} + I_{21} - I_{22},
	\]
	where $I_{ij} = I_{ij}(t\,,x\,,\varepsilon)$ is defined for all
	$i,j=1,2$ as follows:
	\begin{align*}
		I_{11} &:=\int_{B(x,t,\varepsilon)^{c}} (\nabla_{\bm\varepsilon} p)(t-s\,,x-y) 
			\sigma(u(s\,,y))\, \eta(\d s\,\d y);\\
		I_{12} &:= \sigma(u(t\,,x))\cdot\int_{B(x,t,\varepsilon)^c} 
			(\nabla_{\bm\varepsilon} p)(t-s\,,x-y)
			\, \eta(\d s\,\d y);\\
		I_{21} &:=\int_{B(x,t,\varepsilon)} (\nabla_{\bm\varepsilon} p)(t-s\,,x-y) 
			\left[ \sigma(u(s\,,y))-\sigma\left(u\left(\left(t-\beta_\varepsilon\right)_+\,,x
			\right)\right)\right]\eta(\d s\,\d y);
			\text{ and}\\
		I_{22} &:= \left[\sigma(u(t\,,x))-\sigma\left(u\left(\left(t-\beta_\varepsilon\right)_+\,,x
			\right)\right)\right] \cdot
			\int_{B(x,t,\varepsilon)} (\nabla_{\bm\varepsilon} p)(t-s\,,x-y) \,\eta(\d s\,\d y).
	\end{align*}
	The $L^k(\P)$-norms of the $I_{ij}$'s are estimated next. The computations
	are somewhat long and tedious. Therefore, they are presented in five separate steps.  
	\\
	

	\noindent\textbf{Step 1. A comparison estimate for $I_{11}$.}
	In the second step of the proof we establish an inequality that compares
	the moments of the random variable $I_{11}$ to moments of a certain mean-zero
	Gaussian random variable; see \eqref{S2} below.
	
	A suitable formulation
	of the Burkholder--Davis--Gundy inequality \cite{Khoshnevisan} 
	implies that
	\begin{align*}
		\|I_{11}\|_k^2 &\le 4k
			\iiint \mathscr{D}(t-s\,,x-y\,,x-y')
			\left\| \sigma(u(s\,,y))\cdot\sigma(u(s\,,y'))\right\|_{k/2}  f(y-y')\,\d s\,\d y\,\d y'\\
		& \lesssim k\sigma_0^2 \iiint \left| (\nabla_{\bm\varepsilon} p)(t-s\,,x-y) 
			(\nabla_{\bm\varepsilon} p)(t-s\,,x-y')\right|
			f(y-y')\,\d s\,\d y\,\d y',
	\end{align*}
	where 
	$\mathscr{D}(r\,,a\,,a') := | (\nabla_{\bm\varepsilon} p)(r\,,a) 
	(\nabla_{\bm\varepsilon} p)(r\,,a') |,$
	and the triple integrals are computed over all points
	\begin{equation}\label{B2}
		(s\,,y\,,y')\not\in B_2(x\,,t\,,\varepsilon) := 
		\left[\left(t-\beta_\varepsilon\right)_+\,,t\right]\times
		\prod_{i=1}^3[x_i-\gamma_\varepsilon\,,x_i+\gamma_\varepsilon]^2,
	\end{equation}
	If $X$ is a random variable with the standard normal distribution, then
	$\|X\|_k\asymp\sqrt{k}$ uniformly for all $k\ge2$. This fact and
	the previous inequality for $\|I_{11}\|_k^2$ together yield
	\begin{equation}\label{S2}
		\|I_{11}\|_k \lesssim \sqrt{k}\sigma_0\left\|\int_{B(x,t,\varepsilon)^c} 
		|(\nabla_{\bm\varepsilon}  p)(t-s\,,x-y) |\,\eta(\d s\,\d y)\right\|_2,
	\end{equation} 
	valid uniformly for all $(t\,,x\,,k)\in[0,T]\times\R^3\times[2\,,\infty)$ and $\varepsilon>0$,
	where $B(x\,,t\,,\varepsilon)$ was defined in \eqref{B}.\\
	
	\noindent\textbf{Step 2. A Gaussian moment Estimate.}
	Next we develop a moment inequality for the Gaussian stochastic integral on the right-hand
	side of \eqref{S2}; the precise statement can be found in \eqref{S3} below. Since we are only interested in the behavior when $\varepsilon \to 0$, we will assume that $\beta_{\varepsilon}< t$ from now on. 
	
	By Minkowski's inequality,
	\begin{equation}\label{Q1Q2}
		\left\|\int_{B(x,t,\varepsilon)^c} 
		|(\nabla_{\bm\varepsilon}  p)(t-s\,,x-y) |\,\eta(\d s\,\d y)\right\|_2
		\le Q_1^{1/2} + Q_2^{1/2},
	\end{equation}
	where:
	\begin{align*}
		Q_1 &:= \E\left(\left|\int_0^{t-\beta_\varepsilon }
			\int_{\R^3} 
			|(\nabla_{\bm\varepsilon}  p)(t-s\,,x-y) |\,\eta(\d s\,\d y)\right|^2\right);
			\text{ and}\\
		Q_2 &:=\E\left(\left|\int_{t-\beta_\varepsilon}^t
			\int_{[x-\gamma_\varepsilon , x+ \gamma_\varepsilon ]^c} 
			|(\nabla_{\bm\varepsilon}  p)(t-s\,,x-y) |\,\eta(\d s\,\d y)\right|^2\right),
	\end{align*}
	where $[a\,,b] := \prod_{i=1}^3[a_i\,,b_i]$
	for all $a,b\in\R^3$.
	
	After one or two changes of variables [$t-s\to s$, $y-x\to y$],
	\begin{align*}
		Q_1 &= \E\left(\left|\int_{\beta_\varepsilon }^t
			\int_{\R^3} 
			|(\nabla_{\bm\varepsilon}  p)(s\,,y) |\,\eta(\d s\,\d y)\right|^2\right)\\
		&= \int_{\beta_\varepsilon }^t\d s
			\int_{(\R^3)^2}\d y\, \d y'\
			|(\nabla_{\bm\varepsilon}  p)(s\,,y) \cdot (\nabla_{\bm\varepsilon}  p)(s\,,y') |f(y-y')\\
		&= Q_{11} + Q_{12},
	\end{align*}
	where
	\begin{align*}
		Q_{11} &:= \int_{\beta_\varepsilon }^t\d s
			\int_{\substack{y,y'\in\R^3:\\ \|y-y'\|\ge \varepsilon}
			} \d y\,\d y'\
			|(\nabla_{\bm\varepsilon}  p)(s\,,y) \cdot   (\nabla_{\bm\varepsilon}  p)(s\,,y') |
			\varphi(\|y-y'\|);\text{ and}\\
		Q_{12} &:= \int_{\beta_\varepsilon }^t\d s
			\int_{\substack{y,y'\in\R^3:\\ \|y-y'\|< \varepsilon}
			} \d y\,\d y'\
			|(\nabla_{\bm\varepsilon}  p)(s\,,y) \cdot   (\nabla_{\bm\varepsilon}  p)(s\,,y') |
			\varphi(\|y-y'\|);
	\end{align*}
	see \eqref{varphi} for the definition of $\varphi$.
	
	According to \eqref{beta:gamma}, $\beta_\varepsilon > \varepsilon^2$ for all
	$\varepsilon>0$ sufficiently small. Therefore, \eqref{L1} and the monotonicity
	of $\varphi$ [Theorem \ref{th:corr}] together imply that
	\begin{equation}\label{Q11}\begin{split}
		Q_{11} &\le  \varphi(\varepsilon) \int_{\beta_\varepsilon }^t\d s 
			\int_{(\R^3)^2} \d y\,\d y'\
			|(\nabla_{\bm\varepsilon} p)(s\,,y) \cdot  (\nabla_{\bm\varepsilon} p)(s\,,y')|\\
		&\lesssim \varphi(\varepsilon) \int_{\beta_\varepsilon }^t 
			\left( \frac{\varepsilon}{\sqrt{s}}\wedge 1\right)^2 \d s\\
		&\lesssim  \varphi(\varepsilon) \varepsilon^2 \log\left( t/\beta_\varepsilon\right)\\
		&\lesssim [\log(1/\varepsilon)]^{(1/2)-\alpha}.
	\end{split}\end{equation}
	
	One can estimate $Q_{12}$ using the same technique that was used in
	the proof of Proposition \ref{pr:u(x)-u(x')}. More specifically, we
	proceed as follows: For all $\varepsilon>0$ sufficiently small,
	\begin{align*}
		Q_{12} &= \sum_{n=0}^{\infty} \int_{\beta_\varepsilon }^t \d s
			\int_{\substack{y,y'\in(\R^3)^2:\\2^{-n-1}\varepsilon \leq|y-y'| \leq 2^{-n} \varepsilon}}
			\d y\,\d y'\
			|(\nabla_{\bm\varepsilon} p)(s\,,y) \cdot  (\nabla_{\bm\varepsilon} p)(s\,,y')| \varphi(\|y-y'\|)\\
		&\le \sum_{n=0}^{\infty} \varphi\left(2^{-n-1} \varepsilon\right)
			\int_{\beta_\varepsilon }^t \left( \frac{\varepsilon}{\sqrt{s}} \wedge 1\right) 
			\left( \frac{2^{-n} \varepsilon}{\sqrt{s}}\wedge 1\right)^3 \d s 
			&\text{[see \eqref{L1}]}\\
		&= \varepsilon^4 \sum_{n=0}^{\infty} 8^{-n}\varphi\left(2^{-n-1} \varepsilon\right)
			\int_{\beta_\varepsilon }^t \frac{\d s}{s^2}
			&\text{[since $\beta_\varepsilon>\varepsilon^2$]}\\
		&\le \frac{\varepsilon^4}{\beta_\varepsilon}
			\sum_{n=0}^{\infty} 8^{-n}\varphi\left(2^{-n-1} \varepsilon\right).
	\end{align*}
	An appeal to Theorem \ref{th:corr} and \eqref{beta:gamma} yields
	\[
		Q_{12} \lesssim\varepsilon^2\e^{\sqrt{\log(1/\varepsilon)}}
		\sum_{n=0}^{\infty} 2^{-n}\left|
		\log\left( \frac{2^{n+1}}{\varepsilon}\right)\right|^{-\alpha}
		\lesssim \varepsilon^2\e^{\sqrt{\log(1/\varepsilon)}}
		[\log(1/\varepsilon)]^{1-\alpha},
	\]
	by an integral test. This inequality and \eqref{Q11}
	together yield
	\begin{equation}\label{Q1}
		Q_1 \lesssim[\log(1/\varepsilon)]^{(1/2)-\alpha},
	\end{equation}
	valid uniformly for all $(t\,,x)\in\R_+\times\R^3$ and all
	sufficiently small $\varepsilon>0$.
	
	In order to bound $Q_2$, we change variables and then use the simple inequality,
	\[
		\left| (\nabla_{\bm\varepsilon} p)(s\,,w)\right| \le 
		p_s(w+\bm\varepsilon) + p_s(w),
	\]
	in order to deduce that for all $\varepsilon$ sufficiently small,
	\begin{align*}
		Q_2 &\le \int_0^{\beta_\varepsilon}\d s
			\int_{[-\gamma_\varepsilon,\gamma_\varepsilon]^c}\d y
			\int_{[-\gamma_\varepsilon,\gamma_\varepsilon]^c}\d y'\
			\left[ p_s(y+\bm{\varepsilon}) + p_s(y)\right]
			\left[p_s(y'+\bm{\varepsilon}) + p_s(y')\right]f(y-y')\\
		&\le 4\int_0^{\beta_\varepsilon}\d s
			\int_{[-\gamma_\varepsilon/2,\gamma_\varepsilon/2]^c}\d y
			\int_{[-\gamma_\varepsilon/2,\gamma_\varepsilon/2]^c}\d y'\
			p_s(y) p_s(y')f(y-y') \\
		&\le 4\int_0^{\beta_\varepsilon}\d s
			\int_{[-\gamma_\varepsilon/2,\gamma_\varepsilon/2]^c}\d y\
			p_s(y) (p_s*f)(y).
	\end{align*} 
	Because $p_s$ and $f$ are both positive semi-definite,  so
	is $p_s*f$. Moreover, $p_s*f$ is continuous and bounded.
	Therefore, elementary facts about positive definite functions
	tell us that $p_s*f$ is maximized at the origin. In this way we find that
	\[
		Q_2 \lesssim\int_0^{\beta_\varepsilon} (p_s*f)(0)\,\d s
		\int_{[-\gamma_\varepsilon/2,\gamma_\varepsilon/2]^c}
		p_s(y) \,\d y
		\lesssim\int_0^{\beta_\varepsilon} \exp\left( -\frac{\gamma_\varepsilon^2}{{8s}}\right)
		(p_s*f)(0)\,\d s,
	\]
	since $\P\{\|X\|>R\}\lesssim\exp(-R^2/2)$ for all $R>0$ when $X$ is a 
	3-vector of i.i.d.\ standard normal
	random variables. An appeal to Lemma \ref{lem:f} now yields
	\begin{align*}
	Q_2 &\lesssim  \int_0^{\beta_\varepsilon} \exp\left( -\frac{\gamma_\varepsilon^2}{{8s}}\right)
			\frac{\d s}{s[\log(1/s)]^\alpha}\\
			&\leq \exp\left(-\frac{\gamma^2_\varepsilon}{ 8\beta_\varepsilon} \right) \int_0^{\beta_\varepsilon} \frac{\d s}{s[\log(1/s)]^\alpha} \, \d s\\
			&\lesssim \exp\left[ -\frac{1}{2} \exp\left( \frac{1}{2} \sqrt{\log (1/\varepsilon)} \right) \right] \left(\log (1/\varepsilon) \right)^{(1-\alpha)/2}\\
			&\lesssim  [\log (1/\varepsilon) ]^{(1/2)-\alpha}.   
	\end{align*} 
	thanks  to the definition \eqref{beta:gamma} of $\gamma_\varepsilon$
	and $\beta_\varepsilon$. It is easy to deduce from the preceding that
	$\limsup_{\varepsilon\downarrow0}\sqrt{\beta_\varepsilon}\log Q_2
	\le -\tfrac12$, and hence for every $\kappa>0$,
	\(
		Q_2 \lesssim [\log(1/\varepsilon)]^{-\kappa},
	\)
	uniformly for all $\varepsilon>0$ sufficiently small [with room to spare].
	This inequality, \eqref{Q1Q2} and \eqref{Q1} together 
	accomplish the main objective of Step 3; namely, they imply that there exists $\varepsilon_0\in(0\,,1)$ such that for some constant $A_T>0$, 
	\begin{equation}\label{S3}
		\left\|\int_{B(x,t,\varepsilon)^c} 
		|(\nabla_{\bm\varepsilon}  p)(t-s\,,x-y) |\,\eta(\d s\,\d y)\right\|_2 \lesssim A_T
		[\log(1/\varepsilon)]^{(1/4)-\alpha/2},
	\end{equation}
	for all $(t\,,x)\in[0,T]\times\R^3$ and $\varepsilon\in(0\,,\varepsilon_0)$.\\

	\noindent\textbf{Step 3. Estimates for $I_{11}$ and $I_{12}$.}
	It is now easy to find suitable estimates for the $L^k(\P)$ norm of 
	$I_{11}$ and $I_{12}$. The requisite bounds will appear in
	\eqref{S4.1} and \eqref{S4.2} below. 
	
	First of all we simply combine \eqref{S2} with \eqref{S3} to obtain
	the following estimate for $I_{11}$: 
	\begin{equation}\label{S4.1}
		{\| I_{11}\|_k \lesssim A_T\sqrt k \sigma_0
		[\log(1/\varepsilon)]^{(1/4)-\alpha/2}, }
	\end{equation}
	valid uniformly for all $(t\,,x\,,k)\in\R_+\times\R^3\times[2\,,\infty)$
	and $\varepsilon\in(0\,,\varepsilon_0)$.
	
	Next, we estimate the $L^k(\P)$ norm of $I_{12}$ as follows: By the Cauchy--Schwarz inequality,
	\begin{equation}\label{I_12:12}
		\| I_{12}\|_k \le \| \sigma(u(t\,,x)) \|_{2k}\cdot
		\left\| \int_{B(x,t,\varepsilon)^c} 
		(\nabla_{\bm\varepsilon} p)(t-s\,,x-y)
		\, \eta(\d s\,\d y) \right\|_{2k}.
	\end{equation}
	Because $\sigma$ is bounded, and 
	recall that if $X$ has a standard normal
	distribution, then  $\|X\|_k\asymp\sqrt k$, uniformly for all $k\ge 2$. 
	Therefore, the second quantity on the right-hand side of \eqref{I_12:12} can be
	bounded as follows:
	\begin{align*}
		\left\| \int_{B(x,t,\varepsilon)^c} 
			(\nabla_{\bm\varepsilon} p)(t-s\,,x-y)
			\, \eta(\d s\,\d y) \right\|_{2k}
			&\lesssim \sqrt{k}\sigma_0\left\| \int_{B(x,t,\varepsilon)^c} 
			(\nabla_{\bm\varepsilon} p)(t-s\,,x-y)
			\, \eta(\d s\,\d y) \right\|_2\\
		&\lesssim A_T\sqrt{k}\sigma_0[\log(1/\varepsilon)]^{(1/4)-\alpha/2},
	\end{align*}
	thanks to Step 2; see \eqref{S3}. We can combine the preceding inequalities to 
	deduce the following estimate for $I_{12}$: There exists $B\in(0\,,\infty)$ such that
	\begin{equation}\label{S4.2}
		\| I_{12} \|_k \lesssim A_T\sqrt{k}\sigma_0
		[\log(1/\varepsilon)]^{(1/4)-\alpha/2}, 
	\end{equation}
	uniformly for all $(t\,,x\,,k)\in[0,T]\times\R^3\times[2\,,\infty)$ and
	$\varepsilon\in(0\,,\varepsilon_0)$.\\
	
	\noindent\textbf{Step 4. Estimates for $I_{21}$ and $I_{22}$.}
	In this step we derive a bound for the moments of
	$I_{21}$ and $I_{22}$; the end results are \eqref{S5.1} and \eqref{S5.2} below.
	
	Let us recall the sets $B_2(x\,,t\,,\varepsilon)$ from \eqref{B2}. 
	By a suitable application of the Burkholder--Davis--Gundy inequality,
	\begin{equation}\label{pre:I_21}
		\| I_{21} \|_k^2 \le 4k \int_{B_2(x,t,\varepsilon)}
		\mathcal{P}(s\,,y)\mathcal{P}(s\,,y')\mathcal{U}(s\,,y)\mathcal{U}(s\,,y')
		f(y-y')\, \d s\,\d y\,\d y',
	\end{equation}
	where
	\[
		\mathcal{P}(s\,,w) := \left|(\nabla_{\bm\varepsilon} p)(t-s\,,{x-w})\right|
		\quad\text{and}\quad
		\mathcal{U}(s\,,w) := \left\| \sigma(u(s\,,w)) -
		\sigma\left(u\left(\left(t-\beta_\varepsilon\right)_+\,,x
		\right)\right)\right\|_k,
	\]
	for all $(s\,,w)\in B(x\,,t\,,\varepsilon)$.
	Of course, the functions $\mathcal{P}$ and $\mathcal{U}$ depend also
	on the variables $(t\,,x\,,\varepsilon)$, but this dependency
	is not immediately relevant 
	 to the discussion.
	
	Because of the Lipschitz condition of $\sigma$, Propositions
	\ref{pr:u(x)-u(x')} and \ref{pr:u(t)-u(t')} together imply that when $\varepsilon$ is sufficiently small, 
	\begin{align*}
		\mathcal{U}(s\,,w) &\lesssim \left\| u(s\,,w) -
			u\left(\left(t-\beta_\varepsilon\right)_+\,,w\right)\right\|_k + 
			\left\| u\left(\left(t-\beta_\varepsilon\right)_+\,,w\right) - 
			u\left(\left(t-\beta_\varepsilon\right)_+\,,x\right)\right\|_k\\
		&\lesssim A_T \sigma_0 \sqrt{k}
			\left[ \left| \log (\beta_\varepsilon) \right|^{-(\alpha-1)/2}
			+\left|\log(2\gamma_\varepsilon)\right|^{-(\alpha-1)/2}\right]\\
		&\lesssim A_T \sigma_0 \sqrt{k}[\log(1/\varepsilon)]^{-(\alpha-1)/4},
	\end{align*}
	uniformly for all $(s\,,w)\in B(x\,,t\,,\varepsilon)$,
	and $1/\infty:=0$ to account for the possibilities $s=t-\beta_\varepsilon$
	and $w=x$. Consequently, \eqref{pre:I_21} yields
	\begin{align*}
		\|I_{21}\|_k^2 &\lesssim \frac{A_T^2 \sigma_0^2 k}{[\log(1/\varepsilon)]^{(\alpha-1)/2}}
		\int_{B_2(x,t,\varepsilon)}\mathcal{P}(s\,,y)\mathcal{P}(s\,,y')
		f(y-y')\,\d y\,\d y'\, \d s \\
		&\lesssim \frac{A_T^2 \sigma_0^2 k}{[\log(1/\varepsilon)]^{(\alpha-1)/2}}
		\int_0^t \int_{\R^3} \int_{\R^3}\mathcal{P}(s\,,y)\mathcal{P}(s\,,y') f(y-y')\,\d y\,\d y'\, \d s \\
		& \lesssim \frac{A_T^2 \sigma_0^2 k }{[\log (1/\varepsilon)]^{3(\alpha-1)/2}}\,,
	\end{align*}
	the last line follows from \eqref{CASE1} and \eqref{CASE2} [simply apply the latter two inequalities
	with $\|z\|=\varepsilon$, for instance].
	This readily yields the following,
	with room to spare:
	There exist finite and positive constants $A$ and $\varepsilon_1<1$ such that
	\begin{equation}\label{S5.1}
		\|I_{21}\|_k \lesssim A_T\sigma_0\sqrt{k}  
		[\log(1/\varepsilon)]^{3(1-\alpha)/4}, 
	\end{equation}
	uniformly for all $(t\,,x\,,k)\in\R_+\times\R^3\times[2\,,\infty)$ and
	$\varepsilon\in(0\,,\varepsilon_1)$.
	
	Finally, we obtain $\|I_{22}\|_k$ from \eqref{S5.1}, using the Cauchy--Schwarz inequality in the
	same way that $I_{12}$ was derived from $I_{11}$ in Step 4, in order to obtain
	\begin{equation}\label{S5.2}
	{	\|I_{22}\|_k \lesssim A_T \sigma_0 \sqrt{k} 
		[\log(1/\varepsilon)]^{3(1-\alpha)/4},  }
	\end{equation}
	uniformly for all $(t\,,x\,,k)\in[0,T]\times\R^3\times[2\,,\infty)$ and
	$\varepsilon\in(0\,,\varepsilon_1)$.\\
	
	\noindent\textbf{Step 5. Conclusion of proof.} The proposition follows
	from an application of Minkowski inequality, using the results 
	\eqref{S4.1} through \eqref{S5.2} of Steps 1 through 5.
\end{proof}



\section{Proof of Theorems \ref{th:conditional} and \ref{th:local}}

The groundwork for the proof of the main results of the paper
has been laid. We now are ready to prove the main results of the paper,
which we do in order.

\begin{proof}[Proof of Theorem \ref{th:conditional}]
	We start by observing that 
	\begin{equation}\label{u>0}
		\P\{ u(t\,,x)>0\}>0\qquad\text{for every $t>0$ and $x\in \R^3$.}
	\end{equation}
	This is because $\E[u(t\,,x)]=1$, as can be deduced from \eqref{mild}.
	
	Next we observe that one can reduce the scope of the problem to the case
	that $\sigma(z)\ge0$ for all $z\in\R$ without incurring any loss in generality.
	This is because $\sigma$ is continuous and crosses zero at --- and only at --- the origin.
	A second appeal to the assumption $\sigma^{-1}\{0\}=0$ reduces the problem
	to proving the following:
	\begin{equation}\label{cond}
		\P \left(\left.\sup_{y \in B(x,r)} u(t\,,y)=\infty
		\text{ for all $r>0$}\ \right|\, \sigma(u(t\,,x))>0 \right)=1,
	\end{equation}
	for every $(t\,,x)\in(0\,,\infty)\times\R^3$, where the open ball $B(x\,,r)$ was defined in
	\eqref{ball}.
	
	Owing to \eqref{u>0}, we can find two finite numbers $0<A<B$ such that
	\begin{equation}\label{eq:A, B}
		\P\left\{ \sigma(u(t\,,x))\in[A\,,B] \right\}>0.
	\end{equation}
	We plan to prove that the following holds for every such pair $(A\,,B)$ of real numbers
	that satisfy \eqref{eq:A, B}:
	\begin{equation}\label{gobs}
		\lim_{r \to 0} \P \left(\left.\sup_{y \in B(x,r)} u(t\,,y)=\infty 
		\ \right|\, \sigma(u(t\,,x))\in[A\,,B]\right)=1.
	\end{equation}
	This will do the job since we may let $A\downarrow0$ and $B\uparrow\infty$,
	using Doob's martingale convergence theorem, to finish the proof. 
	Thus, it remains to prove \eqref{gobs}.
	
	For the remainder of the proof let us choose and fix an arbitrary space-time point
	$(t\,,x)\in(0\,,\infty)\times\R^3$ for which we plan to verify \eqref{gobs}.
	Also, let us choose an arbitrary real number $\delta>0$.
	Define
	\[
		\Pi(\delta) := \left\{ i=(i_1\,,i_2\,,i_3)\in\N^3:\ - \frac{1}{\sqrt{\delta}} \le i_\nu
		\le \frac{1}{\sqrt{\delta}}\text{\  \ for $\nu=1,2,3$}\right\},
	\]
	and
	\[
		y_i := x +(i_1,i_2,i_3)\delta
		\qquad\text{for all $i\in \Pi(\delta)$.}
	\]
	For every real number $M>0$,
	\begin{align*}
		 \P &\left(\left. 
		 	\max_{i \in \Pi(\delta)} |u(t\,,y_i)-u(t\,,x)|> 2M
			\ \right|\, \sigma(u(t\,,x)) \in [A\,,B] \right)\\
		\ge&  1 - \P \left( \left.
			\max_{i \in \Pi(\delta)} \sigma(u(t\,,x)) |Z(t\,,y_i)- Z(t\,,x)| \le 3M 
			\ \right|\,  \sigma(u(t\,,x)) \in[A\,,B] \right) \\
		& \hskip2in-\P \left(\left. \max_{i \in \Pi(\delta)} |D_t(x\,,y_i)|> M
			\ \right|\, \sigma(u(t\,,x)) \in[A\,,B]\right)\\
		=: & 1- P_1(\delta) -P_2(\delta),
	\end{align*}
	where the definition of $(P_1\,,P_2)$ is clear from context, and
	\begin{equation}\label{D_t}
		D_t(x\,,y) := u(t\,,y)-u(t\,,x) - \sigma(u(t\,,x)) \left[Z(t\,,y)- Z(t\,,x)\right],
	\end{equation}
	for every $y\in\R^3$, and 
	the mean-zero Gaussian random field $Z$ is, as before, the solution to \eqref{SHE} with $\sigma\equiv 1$.
	We are going to prove that $P_1(\delta)$ and $P_2(\delta)$
	both tend to zero as $\delta \downarrow 0$. Since $M>0$ is arbitrary, this
	will complete the proof.
	
	Consider the Gaussian process 
	\[
		\{Z(t\,,y_i)-Z(t\,,x)\}_{i\in \Pi(\delta)}.
	\]
	As was demonstrated in Section \ref{sec: additive noise}, 
	the canonical distance $d$ imposed on $\R^3$ by $Z$ satisfies
	\begin{equation}\label{d}\begin{split}
		d (i\,, j) &:= \sqrt{\E |(Z(t\,,y_i)-Z(t\,,x))-(Z(t\,,y_j)-Z(t\,,x))|^2}\\
		&\asymp\left[ \log \left(\frac{1}{\|i-j\|\delta}\right)\right]^{-(\alpha-1)/2}\,,
	\end{split}\end{equation}
	for every $i,j\in \Pi(\delta)$.
	We plan to apply a metric entropy argument in order to estimate
	the quantity on the left-hand side of \eqref{eq:DF} below. 
	
	Recall from
	Dudley \cite{Dudley} and Fernique \cite{Fernique}---see also \eqref{DF}---that
	\begin{equation}\label{eq:DF}
		\E\left(\max_{i \in \Pi(\delta)} |Z(t\,,y_i)-Z(t\,,x)|\right)
		\asymp \int_0^{\text{diam}[\Pi(\delta)]}\sqrt{\log_+\mathcal{N}(\varepsilon)}
		\,\d\varepsilon,
	\end{equation}
	where $\mathcal{N}(\varepsilon) = \mathcal{N}_{\Pi(\delta)}(\varepsilon)$ denotes the minimum number
	of $d$-balls of radius $\varepsilon>0$ that are needed to cover
	$\Pi(\delta)$ --- this is the \emph{metric
	entropy} of $\Pi(\delta)$ --- and $\text{diam}[\Pi(\delta)]$ denotes the
	diameter of $\Pi(\delta)$ in the metric $d$; that is,
	\[
		\text{diam}[\Pi(\delta)] := \max_{i,j\in \Pi(\delta)}d(i\,,j).
	\]
	It might help to also recall that the implied constants in \eqref{eq:DF}
	can be chosen to be universal and hence do not depend on the
	various parameters of our problem \cite{Dudley,Fernique}.
	
	If $i,j\in \Pi(\delta)$ satisfy $\|i-j\|=1$, then \eqref{d} assures us that
	\[
		d(i\,,j)\asymp \left[ \log (1/\delta) \right]^{-(\alpha-1)/2} =: \varepsilon_0,
	\]
	and hence,
	\[
		\mathcal{N}(\varepsilon)\asymp\vert \delta\vert^{-3/2}\qquad
		\text{uniformly for all $\varepsilon\in(0\,,\varepsilon_0)$}.
	\]
	And  a combintorial argument
	that uses only \eqref{d} shows that there exists a universal constant $c_1 \in(1\,,\infty)$ ---
	independently of $\delta$ --- such that
	\begin{equation}\label{c1}
		c_1^{-1} \delta^{3/2} \exp \left( c_1^{-1} \varepsilon^{-2/(\alpha-1)}\right)
		\le \mathcal{N}(\varepsilon) 
		\le c_1 \delta^{3/2} \exp \left( c_1 \varepsilon^{-2/(\alpha-1)}\right),
	\end{equation}
	uniformly for every $\varepsilon \ge \varepsilon_0$.
	In the same way, we can find a universal constant $c_2\in(0\,,\infty)$ ---
	independently of $\delta$ --- such that
	\begin{equation}\label{c2}
		c_2^{-1}{[\log(1/\delta)]^{(1-\alpha)/2}}\le
		\text{diam}[\Pi(\delta)] \le c_2{[\log(1/\delta)]^{(1-\alpha)/2}}.
	\end{equation}
	One can plug the results of \eqref{c1} and \eqref{c2} into \eqref{eq:DF} in order
	to deduce the following bounds:
	\begin{align*}
		&\E \left(\max_{i \in \Pi(\delta)} |Z(t\,,y_i)-Z(t\,,x)| \right)\\
		&\hskip1in\lesssim \varepsilon_0 \sqrt{ \log(1/\delta)} + 
			\int_{\varepsilon_0}^{c_2[\log (1/\delta)]^{-(\alpha-1)/2}} 
			\left[{-\log(1/\delta)}+ \frac{1}{%
			\varepsilon^{2/(\alpha-1)}}\right]^{1/2}\d\varepsilon\\
		&\hskip1in\lesssim \vert\log\delta\vert^{1-(\alpha/2)} ,
	\end{align*}
	where the parameter dependencies are, as before, uniformly over all choices
	of $(t\,,x\,,\delta)\in(0\,,\infty)\times\R^3\times(0\,,\infty)$.
	Similarly, one derives a matching lower bound, thus leading to the following:
	\begin{equation}\label{EZ-Z}
		\E\left(\max_{i \in \Pi(\delta)} |Z(t\,,y_i)-Z(t\,,x)| \right) \asymp 
		\vert\log\delta\vert^{1-(\alpha/2)} =: \varrho(t\,,\delta).
	\end{equation}
	Careful scrutiny of the parameter dependencies shows that $\varrho$
	does not depend on $x$. Because $\alpha\in (1\,,2)$,
	\begin{align}\label{E:Mtd}
		\lim_{\delta\downarrow0}\varrho(t\,,\delta)=\infty.
	\end{align}
	By the Borell, Sudakov--Tsirel'son  inequality \cite{Borell,ST}, for some $c> 0$ small enough, 
	\begin{equation}\label{CoM}
		\P \left\{  \max_{i \in \Pi(\delta)} |Z(t\,,y_i)-Z(t\,,x)|\leq \frac{c}{A} \varrho(t\,,\delta)\right\}
		\le 2 \exp \left( - \frac{c^2[\varrho(t\,,\delta)]^2}{2A^2V(t)}\right),
	\end{equation} 
	where
	 \[
	 	V(t) := \max_{i \in \Pi(\delta)} \left[\text{Var} (Z(t\,,y_i)-Z(t\,,x)) \right].
	\]
	 Owing to {Proposition \ref{pr:Z:modulus}}, 
	 \[
	 	V(t) \asymp [\log(1/\delta)]^{1-\alpha},\quad
		\text{whence}\quad
		\frac{[\varrho(t\,,\delta)]^2}{2V(t)}\asymp\log(1/\delta),
	\]
	uniformly for all $\delta>0$.
	Therefore, we apply \eqref{EZ-Z} one more time, and plug the end result in 
	\eqref{CoM} in order to see that there exists a finite constant $q>1$ such that 
	\begin{equation}\label{lambda'}
		\P \left\{  \max_{i \in \Pi(\delta)} |Z(t\,,y_i)-Z(t\,,x)|\leq \frac{c}{A}\varrho(t\,,\delta)\right\}
		\le q\delta^{1/q}\qquad\text{uniformly for all $\delta\in(0\,,1)$}.
	\end{equation} 
	Consequently, 
	\begin{align*}
		\lim_{\delta \downarrow 0}
			\P \bigg\{\max_{i \in \Pi(\delta)} \sigma(u(t\,,x)) &|Z(t\,,y_i)-Z(t\,,x)|\leq
			c\varrho(t\,,\delta) ~,~ \sigma(u(t\,,x))\in[A\,,B]\bigg\}\\
		&\hskip1in\le \lim _{\delta \downarrow 0}\P \left\{
			\max_{i \in \Pi(\delta)}|Z(t\,,y_i)-Z(t\,,x)|\leq \frac{c\varrho(t\,,\delta)}{A} \right\}\\
		&\hskip1in=0,
	\end{align*}
	thanks to \eqref{lambda'}, where $A$ is defined in \eqref{eq:A, B}.  
	It follows from this fact that
	$\lim_{\delta\downarrow0} P_1(\delta)=0.$
	\bigskip
	
	In order to complete the proof, it remains to show that
	\begin{align}\label{E:P2d->0}
		\lim_{\delta\downarrow 0} P_2(\delta)=0.
	\end{align}
	Let us recall \eqref{D_t} and write
	\begin{align*}
		\P\left\{\max_{i \in \Pi(\delta)} |D_t(x\,,y_i)|> \varrho(t\,,\delta)
			~,~ \sigma(u(t\,,x))\in [A\,,B]\right\}
		&\le\P \left\{ \max_{i \in \Pi(\delta)} |D_t(x\,,y_i)|> \varrho(t\,,\delta)\right\}\\
		&\le |\Pi(\delta)|\max_{i \in \Pi(\delta)} \P \left\{
			D_t(x\,,y_i) > \varrho(t\,,\delta) \right\}\\
		&\lesssim \delta^{-3/2}\max_{i \in \Pi(\delta)} \P \left\{
			D_t(x\,,y_i) > \varrho(t\,,\delta) \right\},
	\end{align*}
	uniformly for all sufficiently-small $\delta>0$, where $|\,\cdots|$ denotes cardinality.
	We may notice that
	\[
		D_t(x\,,y_i) = (\nabla_{\bm\varepsilon}u)(t\,,x) - \sigma(u(t\,,x))(\nabla_{\bm\varepsilon}
		Z)(t\,,x),
	\]
	for a certain $\bm\varepsilon = \bm\varepsilon(x\,,y_i)\in(0\,,\infty)^3$
	that satisfies $\|\bm\varepsilon\|\lesssim\sqrt\delta$, where the implied
	constant is universal and finite.  Therefore, Proposition \ref{pr:localization} implies that every
	random variable $|D_t(x\,,y_i)|$ is sub Gaussian. In fact, there exists
	$\lambda_0>0$, small enough, such that, for all $T\in(0\,,\infty)$,
	\[
		\sup_{s\in[0,T]}\sup_{x\in\R^3}
		\max_{i\in N(\delta)}\E\left[\exp\left\{
		\lambda_0\left[\log(1/\delta)\right]^{3(\alpha-1)/2}
		|D_s(x\,,y_i)|^2 \right\}\right]\lesssim1,    
	\]
	uniformly for all sufficiently-small $\delta>0$.
	
	Therefore, by Chebyshev's inequality and \eqref{EZ-Z},
	{\begin{align*}
		\P\left\{|D_t(x\,,y_i)| > \varrho(t\,,\delta) \right\} &\lesssim 
			\exp \left( -\lambda_0\left[\log(1/\delta)\right]^{3(\alpha-1)/2}
			[\varrho(t\,,\delta)]^2\right)\\  
		& \le \exp \left(- C\left[\log(1/\delta)\right]^{(\alpha+ 1)/2} \right)    
	\end{align*} }
	for a finite constant $C>0$ that depends only on $t\in[0\,,T]$. 
	Since the cardinality of $\Pi(\delta)$ satisfies $|\Pi(\delta)| \asymp \delta^{-3/2}$,
	it follows from the preceding displayed inequality that
	\[
		\P \left\{\max_{i \in \Pi(\delta)} |D_t(x,y_i)|> \varrho(t\,,\delta)\right\}
		\lesssim 
		{\exp \left( - C \left[\log(1/\delta)\right]^{(\alpha+1)/2}
		+ \tfrac{3}{2} \log (1/\delta)\right), }
	\]
	which tends to $0$ as $\delta \to 0$.  A scaling argument 
implies that
	\[
		\lim_{\delta\downarrow0}
		\P \left\{\max_{i \in \Pi(\delta)} |D_t(x,y_i)|> \lambda\varrho(t\,,\delta)\right\}=0
		\qquad\text{for every fixed $\lambda>0$}.
	\]
	In particular, for all $\lambda>0$,
	\begin{align*}
		&\P \left\{\left.
			\max_{i \in \Pi(\delta)} |u(t\,,y_i)-u(t\,,x)| < \lambda\varrho(t\,,\delta) 
			\ \right|\, \sigma(u(t\,,x)) \in[A\,,B] \right\}\\
		&\hskip0.8in\le \P \left\{\left.
			\max_{i \in \Pi(\delta)} |Z(t\,,y_i)-Z(t\,,x)| < {\frac{2\lambda}{A}}\varrho(t\,,\delta) 
			\ \right|\, \sigma(u(t\,,x)) \in[A\,,B] \right\}\\
		&\hskip3.6in + \P \left\{\max_{i \in \Pi(\delta)} |D_t(x,y_i)|> \lambda\varrho(t\,,\delta)\right\}\\
		&\hskip0.8in\le \P \left\{\left.
			\max_{i \in \Pi(\delta)} |Z(t\,,y_i)-Z(t\,,x)| < {\frac{2\lambda}{A}}\varrho(t\,,\delta) 
			\ \right|\, \sigma(u(t\,,x)) \in[A\,,B] \right\} + o(1),
	\end{align*}
	as $\delta\downarrow0$. Thanks to \eqref{EZ-Z} and Proposition \ref{pr:max:Z}, we can
	choose {$\lambda:=\lambda(A)$} small enough to ensure that the right-most
	probability above also tends to zero as $\delta\downarrow0$.
	In light of \eqref{E:Mtd}, this verifies \eqref{gobs}
	and hence concludes the proof of Theorem \ref{th:conditional}.	
\end{proof}

The dervivation of Theorem \ref{th:conditional} admittedly required some effort. But now
we can adjust that derivation --- without a great deal of additional effort --- in order to  verify
Theorem \ref{th:local}.

\begin{proof}[Proof of Theorem \ref{th:local} (sketch)]
	The conditioning in the equivalent statement
	\eqref{cond} to Theorem \ref{th:conditional} arose 
	because, during the course of the proof of Theorem \ref{th:conditional}, we needed 
	to prove that 
	\begin{equation}\label{last}
		\lim_{\delta \to 0}
		\P \left\{\max_{i \in \Pi(\delta)} |\sigma(u(t\,,x))|\cdot |Z(t\,,y_i)-Z(t\,,x)|
		\le\lambda\varrho(t\,,\delta) \right\}=0,
	\end{equation}
	for a suitably-small choice of $\lambda>0$,
	and $(t\,,x)\mapsto\sigma(u(t\,,x))$ 
	could, in principle, be frequently close to --- or possibly even equal to --- zero.
	In the present setting however, $\sigma$ is bounded uniformly from below, away from zero.
	Therefore, in the present setting, \eqref{last} follows immediately Proposition
	\ref{pr:max:Z}, as long as $\lambda$ is a small-enough [but otherwise fixed]
	positive constant. The remainder
	of the proof of Theorem \ref{th:conditional} remains essentially intact.
\end{proof}

\begin{small}
\bigskip

\noindent\textbf{Le Chen}, \textbf {Jingyu Huang}, \textbf{Davar Khoshnevisan}, and \textbf{Kunwoo Kim}\\

\noindent 	Department of Mathematics, University of Kansas, 
                 Lawrence, KS, 66044 \\       
\noindent Department of Mathematics, University of Utah,
		Salt Lake City, UT 84112-0090\\  
\noindent 	Department of Mathematics, University of Utah,
		Salt Lake City, UT 84112-0090\\ 
\noindent  	Department of Mathematics, Pohang University of Science and Technology, Pohang, Gyeongbuk, Korea 37673 \\      
\noindent\emph{Emails} \& \emph{URLs}:\\
        \indent\texttt{chenle@ku.edu}\hfill\hfil
	        \url{http://www.math.ku.edu/u/chenle/}\\
        \indent\texttt{jhuang@math.utah.edu}\hfill\hfil
        	\url{http://www.math.utah.edu/~jhuang/}\\
	\indent\texttt{davar@math.utah.edu}\hfill\hfil
		\url{http://www.math.utah.edu/~davar/}\\
	\indent\texttt{kunwoo@postech.ac.kr}\hfill\hfil
		\url{http://math.postech.ac.kr/~kunwoo/}
\end{small}

\end{document}